\theoremstyle{plain}% Theorem-like structures provided by amsthm.sty
\newtheorem{theorem}{Theorem}[section]
\newtheorem{lemma}[theorem]{Lemma}
\newtheorem{corollary}[theorem]{Corollary}
\newtheorem{claim}[theorem]{Claim} %doplnené
\theoremstyle{definition}
\newtheorem{example}[theorem]{Example}
\theoremstyle{remark}
\newtheorem{remark}{Remark}
\newcommand{\R}{\mathbb{R}}
\newcommand{\N}{\mathbb{N}}
\newcommand{\Q}{\mathbb{Q}}
\newcommand{\pp}{\mathbb{P}}
\newcommand{\ppp}{\pp^{-}}
\newcommand{\F}{\mathcal{F}}
\newcommand{\B}{\mathcal{B}\left(I\right)}
\newcommand{\nn}{\mathbb{N}_0}
\newcommand{\di}{\mathrm{diam}}
\newcommand{\dd}{\ \mathrm{d}}
\newcommand{\pd}{\left(p_n\right)_{n=0}^\infty}
\newcommand{\xx}{\{X_n\}_{n=0}^\infty}
\newcommand{\zz}{\{\hat{Z}_n\}_{n=1}^\infty}
\newcommand{\ff}{\{F_n\}_{n=0}^\infty}
\newcommand{\tn}{(T^n\hat{\mu})_j}
\begin{document}

%\articletype{ARTICLE TEMPLATE}% Specify the article type or omit as appropriate

\title{Discrete Dynamical Systems with Random Impulses}

\author{
\name{ J. Kováč\textsuperscript{a}\thanks{CONTACT Ján Veselý. Email: jan.vesely@fmph.uniba.sk}, J. Veselý\textsuperscript{a} and K. Janková\textsuperscript{a}}
\affil{\textsuperscript{a}Department of Applied Mathematics and Statistics, Faculty of Mathematics, Physics and Informatics, Comenius University in Bratislava, Mlynská dolina, Bratislava, Slovakia}
}

\maketitle

\begin{abstract}
We study the behaviour of discrete dynamical systems generated by a continuous map $f$ of a compact real interval into itself where at randomly chosen times a function different from $f$ - so called impulse function is applied. We show that both the  splittting property and the  average contraction  property guarantee the stability of the system. We give a number of examples where the verification of these properties is simple.
\end{abstract}

\begin{keywords}
Discrete dynamical systems; random impulses; Markov process; iterated function systems
\end{keywords}

\section{Introduction}
It is well known that discrete dynamical systems generated by continuous self-mappings of a real compact interval $I$
\begin{align}
\label{DDS}
x_{n+1}=f(x_n),\text{ }x_0\in I
\end{align}
can exhibit very complex behavior that was described by various definitions of chaos. Recall Li and Yorke's chaos or distributional chaos \cite{SchweizerSmital} which allows one to compute a measure of chaos. A survey of definitions of chaos for continuous functions defined on a real interval can be found in \cite{ruette}. Discrete dynamical systems or parametric classes of these systems are often used in modelling in biology, physics or economy. Many models  naturally require to adopt a stochastic approach. As a consequence, a random dynamical system is obtained, which is easier to study if it satisfies the Markov property.
\par
It turns out that the chaos in system (\ref{DDS}) can be influenced by so-called impulses, where in certain times instead of the function $f$, which generates the system, the function $g$ (impulse) is applied. Such a procedure has been studied in \cite{DancaFeckan}, where the impulse times were chosen deterministically. This approach enables a so called control of chaos in dynamical systems which is interesting from the point of view of applications  and was discussed in \cite{GUANRONG}. Many papers in this field deal with impulsive differential equations, see e.g. \cite{YANG}, \cite{LI} or for random impulses see \cite{agarval}, \cite{sanz} and \cite{schmalfuss}. Motivated by this approach, we consider system (\ref{DDS})  where the impulse times are random. The distribution of times between impulses is an arbitrary discrete probability distribution. We will focus on the question of stochastic stability of the system. 
\par
The solution of this question is closely related to several results of papers, which have studied systems called iteration function systems with probabilities. The idea was that at any time either a function $f$ with probability $p$ or a function $g$ with probability $1-p$ was randomly applied, independently of the past. It is naturally possible to proceed to a random sample from a finite set of functions. The class of functions from which the sample is made is often the class of continuous monotone mappings of the interval into itself. In \cite{DubinsFreedman} a so-called splitting condition was given for such a class. The condition  guarantees the existence of a single limiting probability distribution. Another class of functions studied in this direction is the class of Lipchitz functions. Several contraction conditions have been studied for iterated function systems with probabilities generated by Lipschitz functions. A survey of these conditions can be found in \cite{stenflo2012}. Similar systems are studied as Markov chains in \cite{BM}, \cite{ohno}, \cite{BarnsleyDemko}, where their asymptotic behavior is considered, and from the point of view of the chaos also in \cite{KovacJankova}. 
\par
The abandonment of the condition of independence in the sample of functions has led to generalizations that assume that the process of sampling is controlled by a Markov chain. For example, in \cite{stenflo}, ergodic theorems are shown under the assumption that the functions satisfy the average contraction condition. In a recently published paper \cite{DiazMatias}, a finite set of functions that can be defined on a wide class of $m\text{-dimensional}$ metric spaces is considered. It assumes that a splitting condition generalizing the aforementioned condition from \cite{DubinsFreedman} is satisfied. The random dynamical systems generated under these assumptions  are studied via the so-called Markovian random products over the corresponding family of maps and the asymptotic stability of the corresponding Markov operator as well as the synchronization of the trajectories is shown. As we will show later, the results of \cite{DiazMatias} can be used to solve a problem with impulses, where the support of the time distribution between impulses is finite. In this paper,  however, we will assume that the distribution of times between impulses can be an arbitrary discrete probability distribution. Therefore we shall concentrate on the case where the support of this distribution is infinite countable. We formulate conditions which guarantee the stochastic stability of the corresponding random dynamical system. Some of the results from \cite{DiazMatias} can be used directly in the form from the paper, in the proof of other it was necessary to use a different approach because the support of the impulse times can take infinitely many values.

Let $f,g$ be continuous mappings from a real compact interval $I$ into itself. By an impulse at time $n$ in discrete dynamical system (\ref{DDS}) we mean that instead of the function $f$, the impulse function $g$ will be applied at time $n$. We will assume that the impulse times are given by independent and identically distributed discrete random variables $T_1, T_2, T_3,\ldots$, where $T_1$ represents the time (more precisely, the number of $f\text{'s}$) to the first impulse, and $T_n$ represents the time (number of $f\text{'s}$) between the $(n-1)\text{-th}$ and the $n\text{-th}$ impulse $g$ for $n\ge 2$. These random variables can take values from the set $\nn=\{0,1,2,\ldots\}$ and we denote the probability of taking the value $n\in\nn$ by $p_n$. The dynamics of the considered system is as follows - at the beginning we choose the point $x_0$ (either a point in $I$ or in general a random variable with respect to some measure $\mu_0$ on the measurable space $\left(I,\mathcal{B}(I)\right)$, where $\mathcal{B}(I)$ denotes the borel subsets of the interval $I$). Then we randomly generate the value of the random variable $T_1$ from the distribution $\pd$ - say $2$ with probability $p_2$ - that means, that $x_1=f(x_0)$, $x_2=f(x_1)$ and $x_3=g(x_2)$ (two $f\text{'s}$ and then impulse $g$). Subsequently, the value of random variable $T_2$ is generated (again from the distribution $\pd$, independently of $T_1$), say $0$, which means that $x_4=g(x_3)$ (zero $f\text{'s}$ between the first and the second impulse). We proceed in the same way for $T_3, T_4,\ldots$ to obtain the whole sequence~$\{x_n\}_{n=0}^\infty$. 

In general, we obtain a stochastic process $\{X_n\}_{n=0}^\infty$, and we will be interested in the conditions which guarantee some form of stability of this process. More precisely, if we denote by $\mu_n$ the distribution of the random variable $X_n$, $n\in\nn$,  we will study the conditions for the existence and uniqueness of a measure $\mu^\star$ on $(I,\mathcal{B}(I))$ such that $\mu_n\to\mu^\star$ weakly. Note  that the stochastic process $\xx$ need not be Markov - for example if $f(x)\equiv 1$ and $g(x)\equiv 0$ for every $x\in I$, then we get the following probabilities
%\begin{align*}
%Pr(X_4=0|X_3=1,X_2=1,X_1=0)&=Pr(T_1=0,T_2=2)=p_0p_2,\\
%Pr(X_4=0|X_3=1,X_2=0,X_1=1)&=Pr(T_1=1,T_2=1)=p_1^2. 
%\end{align*}
\small
\begin{align*}
Pr(X_3=0|X_2=1,X_1=0)&=\frac{p_1}{1-p_0},\\
Pr(X_3=0|X_2=1,X_1=1)&=\frac{p_2}{1-p_0-p_1}. 
\end{align*}
\normalsize
%\small
%\begin{align*}
%Pr(X_3=0|X_2=1,X_1=0)&=\frac{Pr(X_3=0,X_2=1,X_1=0)}{Pr(X_2=1,X_1=0)}=\frac{Pr(T_1=0,T_2=1)}{Pr(T_1=0,T_2\geq 1)}=\\
%&=\frac{p_0p_1}{\sum_{i=1}^{\infty}p_0p_i}=\frac{p_1}{1-p_0},\\
%Pr(X_3=0|X_2=1,X_1=1)&=\frac{Pr(X_3=0,X_2=1,X_1=1)}{Pr(X_2=1,X_1=1)}=\frac{Pr(T_1=2)}{Pr(T_1\geq 2)}=\\
%&=\frac{p_2}{\sum_{i=2}^{\infty}p_i}=\frac{p_2}{1-p_0-p_1}. 
%\end{align*}
%\normalsize
Thus, given $X_2$, the distribution of $X_3$ clearly depends also on the previous values. This complicates the study of the sequence of distributions $\{\mu_n\}_{n=0}^\infty$, since we cannot unambiguously determine the distribution $\mu_{n+1}$ from the distribution $\mu_n$. The problem can be solved using an approach of dynamical systems controlled by a Markov chain where a two-dimensional Markov process is defined for the study of the original one-dimensional system.

\subsection{Impulse system as a dynamical system controlled by a Markov chain}
Consider a Markov chain $\ff$ defined on the state space $\{0,1,2,\ldots\}$, where $F_n=0$ indicates that impulse function $g$ was applied at time $n$ and $F_n=k$ for $k\in\mathbb{N}$ signifies that the time to the next impulse is $k$. More specifically that the non-impulse function $f$ was applied at times $n, n+1,\ldots,n+k-1$ and the impulse function $g$ was applied at time $n+k$. For example, the event $\left\{T_1=2,T_2=0,T_3=3\right\}$ (i.e. the initial functions of the system are $f,f,g,g,f,f,f,g$) can be expressed as the event 
\begin{align*}
\{F_0=2, F_1=1, F_2=0, F_3=0, F_4=3, F_5=2, F_6=1, F_7=0\}. 
\end{align*}

The definition of $\left\{F_n\right\}_{n=0}^{\infty}$ implies that the chain moves from state $k\in\{1,2,\ldots\}$ to state $k-1$ with probability~$1$, i.e. we can write 
\begin{align*}
Pr(F_{n+1}=k-1|F_n=k)=1. 
\end{align*}
If the chain is in state $0$ (i.e. an impulse has just occurred), the time to the next impulse is chosen from the distribution $\pd$, hence
\begin{align*}
Pr(F_{n+1}=k|F_n=0)=p_k
\end{align*}
for $k\in\nn$.
The random variable $F_0$, as well as the random variable $T_1$, represents the time to the first impulse, therefore 
\begin{align*}
Pr(F_0=k)=p_k,
\end{align*}
for every $k\in\nn$.

Thus, the stochastic process $\ff$ is a Markov chain with the initial distribution~$\pd$ and the transition matrix
\begin{align}
\label{MPrechodu}
P=(p_{ij})_{i,j=0}^\infty=
\begin{pmatrix}
p_0 & p_1 & p_2 & \ldots \\
1 & 0 & 0 & \ldots \\
0 & 1 & 0 & \ldots \\
0 & 0 & 1 & \ldots \\
\vdots & \vdots & \vdots & \ddots
\end{pmatrix}.
\end{align}
Throughout the paper we will assume that 
\begin{itemize}
\item $p_0>0$ (to avoid the problems with periodicity),
\item the value $E\equiv E(T_1)=\sum\limits_{j=0}^\infty jp_j<\infty$ (to ensure that all states are positive recurrent),
\item the set $\{j\in \nn: p_j>0\}$ is not bounded - this is more of a technical point that will not significantly affect the claims in this paper - we add it because the case where it is bounded is addressed in \cite{DiazMatias}.
\end{itemize}
If we set $f_0\equiv g$ and $f_1=f_2=f_3=\ldots\equiv f$, the dynamical system with random impulses $\xx$ can be expressed as 
\begin{equation}
\label{dsimp}
X_{n+1}=f_{F_n}(X_n)
\end{equation}
for $n\in \nn$, where $X_0$ is a given starting point or a random variable. 

More formally, we will consider the set $\Sigma=\nn^{\nn}$ (an element $\omega\in\Sigma$ of this set is a sequence $\omega=~(\omega_0,\omega_1,\ldots)$ of elements of the set $\nn$) equipped with the cylindrical $\sigma\text{-algebra}$ $\mathcal{F}$, i.e. $\sigma\text{-algebra}$ generated by $n\text{-dimensional}$ cylinders 
\begin{align*}
\left[\xi_0,\ldots,\xi_{n-1}\right]=\left\{\omega\in \Sigma:\omega_0=\xi_0,\omega_1=\xi_1,\ldots, \omega_{n-1}=\xi_{n-1}\right\}.
\end{align*}
Further we define a probability measure $\pp^\star$ on $\mathcal{F}$ (based on the above-mentioned initial distribution $\pd$ and the transition matrix $P$) such that 
\begin{align*}
\pp^\star([\xi_0,\xi_1,\ldots,\xi_{n-1}])=p_{\xi_0}\cdot p_{\xi_0\xi_1}\cdot p_{\xi_1\xi_2}\cdot \ldots\cdot p_{\xi_{n-2}\xi_{n-1}}.
\end{align*}
%which can be extended to the whole $\sigma\text{-algebra}$ $\mathcal{F}$ using Kolmogorov extension theorem. 

Markov chain $\ff$ can be defined on the probability space $(\Sigma,\mathcal{F},\pp^\star)$ simply as $F_n(\omega)=\omega_n$ for any $n\in \nn$. Similarly, for a given $X_0=x_0$, the system with impulses $\xx$ can be viewed as a stochastic process on $(\Sigma,\mathcal{F},\pp^\star)$ defined as 
\begin{align*}
X_n(\omega)=f_\omega^n(x_0), 
\end{align*}
where $f_\omega^0(x_0)=x_0$ for every $\omega\in\Sigma$ and for $n\in\mathbb{N}$
\begin{align*}
f_\omega^n(x_0)=f_{\omega_{n-1}}\circ f_{\omega_{n-2}}\circ\ldots\circ f_{\omega_0}(x_0).
\end{align*}
Recursively, we can rewrite this formula as 
\begin{align}
\label{split}
X_{n+1}(\omega)=f_{\omega_n}(X_n(\omega))
\end{align}
for $n\in\nn$, which is in accordance with~(\ref{dsimp}).

As we have seen before, the process $\xx$ need not be Markov. However, a stochastic process $\{\hat{Z}_n\}_{n=1}^\infty$ defined as $\hat{Z}_n(\omega)=(\omega_{n-1},X_n(\omega))$ has the Markov property, because given $\hat{Z}_n$, the distribution of $\hat{Z}_{n+1}$ is independent of $\hat{Z}_{n-1},\hat{Z}_{n-2},\ldots,\hat{Z}_1$. This can be seen from the fact, that if $\hat{Z}_n=(k,x)$ for $k\in\mathbb{N}$ and $x\in I$, then $\hat{Z}_{n+1}=(k-1, f_{k-1}(x))$ with probability $1$ and if $\hat{Z}_n=(0,x)$, then $\hat{Z}_{n+1}=(j,f_j(x))$ with probability $p_j$ for every $j\in\nn$. 

Note that $\hat{Z}_n$ maps $\Sigma$ to the set $\hat{M}\equiv\nn\times I$, therefore the distribution of $\hat{Z}_n$ is a probability measure, say $\hat{\mu}^{(n)}$ defined on some appropriately chosen $\sigma\text{-algebra}$ $\hat{\mathcal{S}}$ over $\hat{M}$. Since $\zz$ is a Markov process, the distribution $\hat{\mu}^{(n+1)}$ of $\hat{Z}_{n+1}$ can be determined explicitly using the distribution $\hat{\mu}^{(n)}$ of $\hat{Z}_n$. Consider a set $\hat B\in \hat{\mathcal{S}}$ and its decomposition to 
\begin{align*}
\hat B = \bigcup_{j=0}^\infty \{j\}\times B_j, 
\end{align*}
where $B_j=\{x\in I: (j,x)\in \hat{B}\}$ for every $j\in\nn$. Then
\begin{align*}
\hat{\mu}^{(n+1)}(\hat{B})&=\pp^\star (\hat{Z}_{n+1}\in \hat{B})=\sum_{j=0}^\infty \pp^\star (\hat{Z}_{n+1}\in\{j\}\times B_j)=\\
&=\sum_{j=0}^\infty \left(\pp^\star(\hat{Z}_n\in\{j+1\}\times f_j^{-1}(B_j))+p_j\pp^\star(\hat{Z}_n\in \{0\}\times f_j^{-1}(B_j))\right)=\\
&= \sum_{j=0}^\infty \left(\hat{\mu}^{(n)}(\{j+1\}\times f_j^{-1}(B_j)) + p_j\hat{\mu}^{(n)}(\{0\}\times f_j^{-1}(B_j))\right).
\end{align*}
Moreover, for any measure $\hat{\mu}$ on $(\hat{M},\hat{\mathcal{S}})$ we can define for every $j\in \nn$ a measure $\mu_j$ on $(I,\mathcal{B}(I))$ as $\mu_j(C)\equiv \hat{\mu}(\{j\}\times C)$, $C\in \mathcal{B}(I)$. Clearly, 
\begin{align*}
\hat{\mu}(\hat{B})=\sum_{j=0}^\infty \mu_j(B_j). 
\end{align*}
Using this notation and the above relationship between $\hat{\mu}^{(n+1)}$ and $\hat{\mu}^{(n)}$, we get that the Markov operator~$T$ of the stochastic process $\zz$ is of the form
\begin{align}
\label{operator T}
T\hat{\mu}(\hat{B})=\sum_{j=0}^\infty\left(\mu_{j+1}(f_j^{-1}(B_j))+ p_j\mu_0(f_j^{-1}(B_j))\right). 
\end{align}

In this paper, we will show conditions for the weak convergence of the sequence of measures $\{T^n\hat{\mu}\}_{n=0}^\infty$ to a unique measure $\hat{\mu}^\star$, regardless of the initial measure $\hat{\mu}$. By projecting onto the second coordinate, we also obtain a weak convergence of distributions of the impulse process $\xx$. 

To formulate the main results, we first need to define two probability measures on the measurable space $(\Sigma, \mathcal{F})$ - a measure $\pp$ based on the stationary distribution of the Markov chain $\ff$ and a measure $\ppp$ based on the reversed Markov chain. 

\begin{remark}
Throughout the paper, we will often alternate between working with two-dimensional and one-dimensional objects. For easier differentiation, we will put a hat over the two-dimensional objects.
\end{remark}

\subsection{Invariant distribution and reversed chain}
It is easy to show that the Markov chain with the transition matrix $P$ has the invariant distribution $m\equiv (m_i)_{i=0}^\infty$, where 
\begin{align*}
m_i=\frac{\sum_{j=i}^\infty p_j}{1+E}
\end{align*}
for $i\in \nn$. Indeed, for any $j\in\nn$ we have
\begin{align*}
\sum_{i=0}^\infty m_i p_{ij} &= \sum_{i=0}^\infty \frac{\sum_{k=i}^\infty p_k}{1+E} p_{ij} \stackrel{(a)}{=}
\frac{\sum_{k=0}^\infty p_k}{1+E} p_{0j} + \frac{\sum_{k=j+1}^\infty p_k}{1+E} p_{(j+1)j} \stackrel{(b)}{=}\\
&\stackrel{(b)}{=} \frac{p_j}{1+E} +  \frac{\sum_{k=j+1}^\infty p_k}{1+E} =  \frac{\sum_{k=j}^\infty p_k}{1+E} = m_j,
\end{align*}
where in the equality $(a)$ we have used the fact that $p_{ij}\neq 0$ only for $i=0$ and $i=j+1$ and in the equality~$(b)$ we put $p_{0j}=p_j$ and $p_{(j+1)j}=1$. Note that under the assumption that the set $\{j\in \nn: p_j>0\}$ is not bounded, $m_i\neq 0$ for every $i\in \nn$. According to Theorems 8.1 and 8.2 in \cite{BM} on page 162 and 172, this distribution is unique and the distribution of the random variable $F_n(\omega)=\omega_n$ converges to $m$ in the total variation distance, i.e.
\begin{align*}
\lim\limits_{n\to\infty}\sum_{j=0}^\infty \left|Pr(F_n=j|F_0=i) - m_j\right|=0
\end{align*}
regardless of $i\in \nn$, hence the asymptotic behavior of the chain with the transition matrix $P$ does not depend on the initial distribution. 
Therefore, a sort of natural measure on $(\Sigma, \F)$ corresponding to the transition matrix $P$ is a probability
 measure $\pp$ whose finite-dimensional distributions have the form 
\begin{align*}
\pp\left([\xi_0,\ldots,\xi_{n-1}]\right)= m_{\xi_0}\cdot p_{\xi_0\xi_1}\cdot \ldots \cdot p_{\xi_{n-2}\xi_{n-1}}.
\end{align*}
In this case, the initial distribution is directly the invariant distribution, thus for the random variable $F_n(\omega)=\omega_n$ we have from the properties of the invariant distribution~that
\begin{align*}
\pp(F_n=i)=m_i
\end{align*}
for every $i,n\in\nn$.

Finally, in this paper we will work with the probability measure $\ppp$, also defined on $(\Sigma, \F)$ given by finite-dimensional distributions 
\begin{align*}
\ppp\left([\xi_0,\ldots,\xi_{n-1}]\right)= m_{\xi_0}\cdot q_{\xi_0\xi_1}\cdot \ldots \cdot q_{\xi_{n-2}\xi_{n-1}}, 
\end{align*}
where $q_{ij}\equiv \frac{m_j}{m_i}p_{ji}$ for $i,j\in\nn$. Such a measure corresponds to the so-called reversed Markov chain with the initial distribution $(m_i)_{i=0}^\infty$ and the transition matrix $Q=~(q_{ij})_{i,j=0}^\infty$. Note, that 
\begin{align*}
\ppp([\xi_0,\ldots,\xi_{n-1}])&=m_{\xi_0}\cdot q_{\xi_0\xi_1}\cdot \ldots \cdot q_{\xi_{n-2}\xi_{n-1}} 
=\\
&=m_{\xi_0}\cdot \frac{m_{\xi_1}}{m_{\xi_0}}\cdot p_{\xi_1\xi_0}\cdot \frac{m_{\xi_2}}{m_{\xi_1}}\cdot p_{\xi_2\xi_1}\cdot\ldots\cdot \frac{m_{\xi_{n-1}}}{m_{\xi_{n-2}}}\cdot p_{\xi_{n-1}\xi_{n-2}}=\\
&=m_{\xi_{n-1}}\cdot p_{\xi_{n-1}\xi_{n-2}}\cdot\ldots\cdot p_{\xi_2\xi_1}\cdot p_{\xi_1\xi_0}= \pp([\xi_{n-1},\ldots,\xi_0])
\end{align*}
(therefore ,,reversed chain'') and for any $j\in\nn$
\begin{align*}
\sum_{i=0}^\infty m_i q_{ij}&= \sum_{i=0}^\infty m_i\cdot\frac{m_j}{m_i}\cdot p_{ji}=m_j\sum_{i=0}^\infty p_{ji}=m_j, 
\end{align*}
thus $m$ is also  invariant distribution of the reversed chain. 

\subsection{Main results}
The properties of continuous functions on a compact interval imply that for any $\xi\in \Sigma$ the set 
\begin{align*}
I_\xi^n \equiv f_{\xi_0}\circ f_{\xi_1}\circ\ldots\circ f_{\xi_{n-1}}(I)
\end{align*}
is a closed interval or a singleton and $I_\xi^{n+1}\subseteq I_{\xi}^{n}$. Consequently, the set 
\begin{align*}
I_\xi\equiv \bigcap_{n=1}^\infty I_\xi^n
\end{align*}
is also a closed interval or a singleton. Denote by $S$ the set of $\xi\in\Sigma$ such that $I_\xi$ is a singleton, i.e.
\begin{align*}
S \equiv\{\xi\in\Sigma: \di\left(I_\xi\right)=0\}, 
\end{align*}
where $\di\left(I_\xi\right)$ denotes the diameter of the set $I_\xi$. Finally, consider a function $\pi:~S\to~I$ given by the formula 
\begin{align*}
\pi(\xi)=\lim_{n\to\infty} f_{\xi_0}\circ\ldots\circ f_{\xi_n}(x), 
\end{align*}
where $x\in I$ is arbitrary (the limit is the same for every $x\in I$ by the definition of $S$). 

\begin{theorem}
\label{thm1}
If $\ppp(S)=1$, then for any probability measure $\hat{\mu}$ defined on $(\hat{M},\hat{\mathcal{S}})$ and for any continuous and bounded function $\hat{h}:\hat{M}\to \R$
\begin{equation}\label{mainEq}
\lim_{n\to\infty} \int\limits_{\hat{M}} \hat{h}\dd T^n\hat{\mu}=\int\limits_{\hat{M}} \hat{h}\dd\hat{\pi}\odot\ppp, 
\end{equation}
where $\hat\pi:S\to \hat{M}$ is a function given by $\hat{\pi}(\xi)=(\xi_0,\pi(\xi))$ and $\hat{\pi}\odot \ppp$ denotes a pushforward measure on $(\hat{M},\hat{\mathcal{S}})$ defined~as
\begin{align*}
\hat{\pi}\odot \ppp(\hat{B})=\ppp(\hat{\pi}^{-1}(\hat{B}))
\end{align*}
for $\hat{B}\in\hat{\mathcal{S}}$.
\end{theorem}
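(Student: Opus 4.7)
The driving observation is a \emph{time reversal}: iterating (\ref{split}) gives $X_{n+1}=f_{F_n}\circ f_{F_{n-1}}\circ\cdots\circ f_{F_1}(X_1)$, and the substitution $\xi_i:=F_{n-i}$ rewrites this as the nested composition $f_{\xi_0}\circ f_{\xi_1}\circ\cdots\circ f_{\xi_{n-1}}(X_1)\in I_\xi^n$, which by the hypothesis $\ppp(S)=1$ collapses $\ppp$-a.s.\ to the point $\pi(\xi)$ as $n\to\infty$. Since $\int_{\hat M}\hat h\,d(\hat\pi\odot\ppp)=\int_{\Sigma}\hat h(\xi_0,\pi(\xi))\,d\ppp(\xi)$, equation (\ref{mainEq}) reduces to showing that the law of $\hat Z_{n+1}=(F_n,X_{n+1})$ under the process started from $\hat\mu$ converges weakly to the law of $(\xi_0,\pi(\xi))$ under $\ppp$.

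I would first settle the \emph{stationary starting law}. Assume the first-coordinate marginal of $\hat\mu$ equals $m$, and write $\hat\mu(\{j\}\times C)=m_j\,\tilde\mu_j(C)$ for conditional probability measures $\tilde\mu_j$ on $I$. Unfolding $T^n$ and invoking the reversal identity $m_{j_0}p_{j_0j_1}\cdots p_{j_{n-1}j_n}=\ppp([j_n,\ldots,j_0])$ already verified in the excerpt gives
\[
\int_{\hat M}\hat h\,dT^n\hat\mu \;=\; \int_{\Sigma}\!\int_{I}\hat h\bigl(\xi_0,\,f_{\xi_0}\circ\cdots\circ f_{\xi_{n-1}}(x)\bigr)\,d\tilde\mu_{\xi_n}(x)\,d\ppp(\xi).
\]
For $\ppp$-a.e.\ $\xi$ one has $\di(I_\xi^n)\to 0$, so the inner composition tends to $\pi(\xi)$ \emph{uniformly in $x\in I$}; continuity and boundedness of $\hat h$ together with dominated convergence then close this case.

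For an arbitrary first-coordinate marginal $\rho$ of $\hat\mu$, the same reversal introduces an extra factor $\rho_{\xi_n}/m_{\xi_n}$ in the integrand, depending only on the terminal coordinate. The plan is to combine (i) the uniform shrinkage above, which makes the inner integral $\ppp$-asymptotically equal to $\hat h(\xi_0,\pi(\xi))$ independently of $\tilde\mu_{\xi_n}$, with (ii) the convergence of the reversed chain to its invariant distribution (Theorems~8.1--8.2 of \cite{BM} cited above), which asymptotically decouples the early coordinate $\xi_0$ on which $\pi(\xi)$ essentially depends from the terminal $\xi_n$ carrying the weight; since $\sum_j m_j(\rho_j/m_j)=1$, the weight then averages out in the limit, leaving $\int_{\Sigma}\hat h(\xi_0,\pi(\xi))\,d\ppp(\xi)$. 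The main technical obstacle is that the weight $\rho_{\xi_n}/m_{\xi_n}$ may be unbounded and the state space $\nn$ is infinite, so the finite-state-space arguments of \cite{DiazMatias} do not apply directly; I anticipate remedying this by truncating to $\xi_n\le N$ where the weight is bounded and the asymptotic independence becomes quantitative, and controlling the tail $\xi_n>N$ via the assumption $E<\infty$ and the resulting summability of $\sum_{j>N}m_j$.
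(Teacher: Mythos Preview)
Your outline is essentially correct and shares the paper's two pillars: the time-reversal identity turning $T^n\hat\mu$ into an integral over the reversed chain $\ppp$, and the pointwise/dominated convergence step driven by $\di(I_\xi^n)\to 0$ for $\ppp$-a.e.\ $\xi$ (your Step~1 is precisely the paper's Lemma~\ref{L52} plus DCT). Where you diverge is in the treatment of a non-stationary first marginal. You propose to keep the Radon--Nikodym weight $\rho_{\xi_n}/m_{\xi_n}$ in the integrand and then argue an ``asymptotic decoupling'' of $\xi_n$ from the early coordinates on which $\pi$ depends. The paper instead sidesteps the unbounded weight entirely: it first applies $T^{n_1}$ to the initial measure so that, by total-variation convergence of the chain $\{F_n\}$ to $m$ (Corollary~\ref{C1}), the new first-coordinate marginal $(\nu_i(I))_i$ is within $\varepsilon$ of $(m_i)_i$ on a finite window $i<M$; from there only bounded comparisons $|m_i-\nu_i(I)|$ and tail sums $\sum_{i\ge M}m_i$, $\sum_{i\ge M}\nu_i(I)$ enter, never a ratio.

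Your route can be made to work, but the phrase ``$\pi(\xi)$ essentially depends on $\xi_0$'' hides real labour: $\pi$ depends on \emph{all} coordinates, so the decoupling argument needs an intermediate approximation of $\pi(\xi)$ by a cylinder function of $(\xi_0,\dots,\xi_k)$, then the Markov property of the reversed chain to control $E_{\ppp}[\rho_{\xi_n}/m_{\xi_n}\mid \xi_0,\dots,\xi_k]$, all under the truncation $\xi_n\le N$ with a separate tail estimate---a three-parameter $(k,N,n)$ $\varepsilon$-argument. The paper's ``pre-mix by $T^{n_1}$'' collapses this to a two-parameter argument and never requires asymptotic independence of $\pi$ from $\xi_n$; it is the cleaner implementation of the same underlying idea (mixing of the chain) and I would recommend adopting it.
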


Let's mention that in some cases we will integrate the functions $\pi$ or $\hat{\pi}$ over the whole $\Sigma$, but the measure of the set $\Sigma\setminus S$ will be zero in these cases, and thus it is possible to formally define these functions $\pi$ and $\hat{\pi}$ on this set arbitrarily.

%\begin{poz}
%\label{poz1}
%For any probability measure $\hat{\mu}$ on $(\hat{M},\hat{\mathcal{S}})$ we have $T^n\hat{\mu}\to \hat{\pi}\odot\ppp$ weakly. Moreover, the distribution $\hat{\pi}\odot\ppp$ is the unique stationary distribution of the stochastic process $\left\{Z_n\right\}_{n=1}^{\infty}$ . 
%\end{poz}

Since the condition $\ppp(S)=1$ is not easy to verify, we give two sufficient conditions for its validity.

\begin{theorem}
\label{thm2}
Let $f_0,f_1,\ldots$ be Lipschitz continuous with the Lipschitz constants $L_0, L_1,\ldots$. If the functions are contractive in average with respect to $\ppp$ in the sense that 
\begin{align}
\label{aveCont}
E_{\ppp} (\log L_{F_0})<0, 
\end{align}
then $\ppp(S)=1$.
\end{theorem}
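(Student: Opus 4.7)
The plan is to reduce $\ppp(S)=1$ to a direct application of Birkhoff's ergodic theorem along the stationary reversed chain. Lipschitz continuity gives, by an immediate induction on $n$,
\[
\di(I_\xi^n) \;\le\; \di(I)\prod_{k=0}^{n-1} L_{\xi_k},
\]
so to obtain $\xi\in S$ it suffices to show that $\sum_{k=0}^{n-1}\log L_{\xi_k}\to -\infty$ (with the convention $\log 0 = -\infty$; any $\xi$ containing a zero Lipschitz constant lies in $S$ trivially, as the composition then collapses to a constant). The question therefore becomes whether this divergence holds $\ppp$-a.s., which is an ergodic-theoretic statement about the shift on $(\Sigma,\F,\ppp)$.

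First I would verify that the shift on $(\Sigma,\F,\ppp)$ is ergodic. Under $\ppp$ the coordinate process $(F_n)$ is a Markov chain with transition matrix $Q$ and initial distribution $m$, which is also invariant for $Q$ (as computed in the excerpt), so the process is stationary. A short computation shows that from state $i$ the only transitions under $Q$ with positive probability are to $i+1$ and (when $p_i>0$) to $0$; using that $\{k:p_k>0\}$ is unbounded one checks that any two states communicate, so $Q$ is irreducible. Positive recurrence of $Q$ then follows from the fact that $E<\infty$ makes $m$ a genuine probability distribution, and a standard result identifies the invariant $\sigma$-algebra of a stationary, irreducible, positive recurrent Markov measure on path space with the trivial one.

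Birkhoff's ergodic theorem applied to the observable $\varphi(\xi)=\log L_{\xi_0}$ then yields
\[
\frac{1}{n}\sum_{k=0}^{n-1}\log L_{\xi_k}\;\longrightarrow\;E_{\ppp}\log L_{F_0}\;<\;0\qquad\ppp\text{-a.s.}
\]
Because the limit is strictly negative, the partial sums themselves diverge to $-\infty$; combined with the Lipschitz bound from the first paragraph this gives $\di(I_\xi^n)\to 0$ for $\ppp$-a.e.\ $\xi$, hence $\ppp(S)=1$.

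The main obstacle is the ergodicity step: the convergence results for $P$ already quoted in the paper are not directly applicable to the time-reversed chain $Q$, so one has to argue separately that $Q$ inherits irreducibility and positive recurrence from $P$ and then appeal to the standard ergodic-theoretic structure of stationary Markov shifts. A minor subtlety is that $\log L_{F_0}$ may be unbounded below (even equal to $-\infty$ on a set of positive measure), which is harmless because Birkhoff's theorem applies as soon as $(\log L_{F_0})^+$ is $\ppp$-integrable; this is implicit in the hypothesis that $E_{\ppp}\log L_{F_0}$ be a well-defined quantity strictly less than $0$.
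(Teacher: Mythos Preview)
Your argument is correct and essentially coincides with the paper's: both bound $\di(I_\xi^n)$ by $\di(I)\prod_{k<n}L_{\xi_k}$ and then show that the ergodic averages $\frac{1}{n}\sum_{k<n}\log L_{\xi_k}$ converge $\ppp$-a.s.\ to the strictly negative quantity $E_{\ppp}(\log L_{F_0})$, forcing $\di(I_\xi^n)\to 0$. The only difference is packaging --- the paper directly invokes a law of large numbers for stationary Markov chains (Theorem 10.1 in \cite{BM}), whereas you first verify irreducibility and positive recurrence of the reversed chain $Q$ and then appeal to Birkhoff; your route is slightly more self-contained but the substance is identical.
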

Note, that in the paper \cite{stenflo} it is directly proved that under the assumption of average contraction~(\ref{aveCont}), the distribution of the dynamical system controlled by Markov chain converges weakly to a unique distribution. However, for the sake of completeness, we will provide a proof using an intermediate step with the set $S$. Let us also notice that in our case, where $f_1=f_2=\ldots$ (therefore $L_1=L_2=\ldots$), the expectation on the left side of the inequality~(\ref{aveCont}) can be simplified to
\begin{align*}
E_{\ppp}(\log L_{F_0})&=\sum_{i=0}^\infty (\log L_i)\ppp(F_0=i)=(\log L_0)m_0 + (\log L_1)\sum_{i=1}^\infty m_i=\\&=\frac{1}{1+E}\log L_0+\frac{E}{1+E}\log L_1.
\end{align*}
The obtained expression is less than zero if 
\begin{align}
\label{claim1}
%\frac{1}{1+E}\log L_0+\frac{E}{1+E} \log L_1<0,\\
%E\log L_1<-\log L_0,\\
L_1<L_0^{-E}.
\end{align}

This gives a sufficient condition for $\ppp(S)=1$ which can be simply verified. The second sufficient condition for $\ppp(S)=1$ assumes that the functions generating the dynamical system controlled by Markov chain are monotone injections that satisfy so called splitting condition as formulated in \cite{DiazMatias}. The splitting condition is satisfied if there exist $\pp$- admissible sequences $a_1,\ldots, a_l\in\nn$ and $b_1,\ldots, b_r\in\nn$ such that 
\begin{align}
\label{splitting}
f_{a_l}\circ \ldots\circ f_{a_1}(I)\cap f_{b_r}\circ\ldots\circ f_{b_1}(I)=\emptyset, 
\end{align}
where $a_l=b_r$. A sequence $a_1,\dots , a_l$ is $\pp\text{-admissible}$ if $\pp([a_1,\ldots,a_l])>0$.

\begin{theorem}
\label{thm3}
If the splitting condition is satisfied, then $\ppp(S)=1$.
\end{theorem}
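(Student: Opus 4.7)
The plan is to prove $\ppp(S)=1$ via a zero-one dichotomy combined with a contraction argument driven by the two-branch structure supplied by the splitting condition.

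First, I would observe that $S$ is $\sigma$-invariant on $\Sigma$: the identity $I_\xi=f_{\xi_0}(I_{\sigma\xi})$ together with $f_{\xi_0}$ being a monotone injection gives $\di(I_\xi)=0\iff\di(I_{\sigma\xi})=0$. The measure $\ppp$ is stationary for $\sigma$ (built on the invariant distribution $m$), and the reversed chain with transition matrix $Q=(q_{ij})$ is irreducible and aperiodic (aperiodicity from $q_{00}=p_0>0$), hence ergodic. Therefore $\ppp(S)\in\{0,1\}$ and it is enough to show $\ppp(S)>0$.

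Next I would exploit the splitting condition. Put $c=a_l=b_r$, $A=f_{a_l}\circ\cdots\circ f_{a_1}(I)$ and $B=f_{b_r}\circ\cdots\circ f_{b_1}(I)$; by hypothesis $A$ and $B$ are disjoint closed subintervals of $f_c(I)$. The identity $\ppp([\xi_0,\ldots,\xi_{n-1}])=\pp([\xi_{n-1},\ldots,\xi_0])$ already derived turns the $\pp$-admissibility of $a_1,\ldots,a_l$ and $b_1,\ldots,b_r$ into positivity under $\ppp$ of the reversed blocks $(a_l,\ldots,a_1)$ and $(b_r,\ldots,b_1)$. By the Markov property for the reversed chain, conditional on $\xi_n=c$ the next $l$ (resp.\ $r$) symbols coincide with the reversed $a$-pattern (resp.\ $b$-pattern) with probabilities $\alpha,\beta>0$ that do not depend on $n$ or on $(\xi_0,\ldots,\xi_{n-1})$. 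Writing $h_n=f_{\xi_0}\circ\cdots\circ f_{\xi_{n-1}}$ and $D_n=\di(h_n(I))$, on the $a$-event $I_\xi^{n+l}=h_n(A)$, on the $b$-event $I_\xi^{n+r}=h_n(B)$, and in all other continuations $I_\xi^{n+\max(l,r)}\subseteq h_n(f_c(I))$. Since $h_n$ is a monotone injection, $h_n(A)$ and $h_n(B)$ remain disjoint subintervals of $h_n(f_c(I))$, so $\di(h_n(A))+\di(h_n(B))\le\di(h_n(f_c(I)))\le D_n$. Combining this with the elementary bound $\alpha x+\beta y\le\max(\alpha,\beta)(x+y)$ for $x,y\ge0$ produces the conditional contraction
\begin{equation*}
\mathbb{E}_{\ppp}\bigl[D_{n+\max(l,r)}\mid\sigma(\xi_0,\ldots,\xi_{n-1}),\,\xi_n=c\bigr]\le(1-\min(\alpha,\beta))\,D_n.
\end{equation*}

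Finally, I would iterate this contraction along the successive visits $\tau_1<\tau_2<\cdots$ of the reversed chain to state $c$. The ergodic theorem for $\ppp$ ensures that these visits occur with asymptotic frequency $m_c>0$ on $\ppp$-almost every $\xi$, and using the strong Markov property at each $\tau_j$ together with the monotonicity $D_m\le D_{m'}$ for $m\ge m'$ lets one propagate the above bound to $\mathbb{E}_{\ppp}[D_{\tau_j}]\le(1-\min(\alpha,\beta))^{\,j-1}\,\mathbb{E}_{\ppp}[D_{\tau_1}]\to 0$, which forces $\mathbb{E}_{\ppp}[D_\infty]=0$ and hence $\ppp(S)=1$. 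The main difficulty I anticipate is precisely this last step: in the finite-support setting of \cite{DiazMatias} the chain has bounded return times and uniform pathwise control, whereas here the return times to $c$ are only known to be integrable, so carrying the conditional contraction through the possibly heavy-tailed inter-visit gaps requires the strong Markov property together with the a.s.\ monotonicity of $D_n$, rather than any pathwise uniform estimate on the length of excursions away from $c$.
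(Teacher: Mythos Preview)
Your approach differs substantially from the paper's. The paper reduces to showing $\ppp(S^x)=0$ for every $x\in I$ (via the inclusion $\Sigma\setminus S\subseteq\bigcup_{q\in\mathbb{Q}\cap I}S^q$), and then follows the Diaz--Matias scheme: it first extends the two splitting words to $\ppp$-admissible cylinders of \emph{equal} length with matching first and last symbols (this padding is where the specific transition structure of the impulse chain is exploited), and then applies the combinatorial injection argument of Proposition~3.1 in \cite{DiazMatias} to bound $\ppp(S^x_{nN})$ by the probability $\ppp(\Sigma_n^W)$ of avoiding a fixed cylinder $W$ along an arithmetic progression, which is shown to vanish via the strong Markov property. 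Your route is more directly probabilistic: you bypass the cylinder-padding construction entirely and instead contract the expected diameter $E_{\ppp}[D_n]$ at visits of the reversed chain to the common state $c=a_l=b_r$. This is cleaner and arguably more robust --- no matching of endpoints is needed --- though it leans on the monotone-injection hypothesis in the same essential way (to obtain $\di(h_n(A))+\di(h_n(B))\le D_n$).

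There is one gap in your iteration step. Your conditional contraction bounds $D_{\tau_j+\max(l,r)}$, not $D_{\tau_{j+1}}$, and the monotonicity you invoke gives $D_{\tau_{j+1}}\le D_{\tau_j+\max(l,r)}$ only when $\tau_{j+1}\ge\tau_j+\max(l,r)$; successive visits to $c$ can occur within fewer than $\max(l,r)$ steps, in which case the inequality points the wrong way. (The difficulty you anticipate --- heavy-tailed inter-visit gaps --- is actually harmless here; it is \emph{short} gaps that cause trouble.) The fix is immediate: redefine $\tau_{j+1}$ to be the first visit to $c$ at a time $\ge\tau_j+\max(l,r)$, which is still an a.s.\ finite stopping time by positive recurrence. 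Then $D_{\tau_{j+1}}\le D_{\tau_j+\max(l,r)}$ holds pointwise, the strong Markov property yields $E_{\ppp}[D_{\tau_{j+1}}\mid\mathcal{F}_{\tau_j}]\le(1-\min(\alpha,\beta))D_{\tau_j}$, and iterating gives $E_{\ppp}[\di(I_\xi)]\le E_{\ppp}[D_{\tau_j}]\to 0$, hence $\ppp(S)=1$. Note also that your zero-one observation, while correct, is never actually used: the contraction argument delivers $\ppp(S)=1$ outright.
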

Note, that the statements of Theorem \ref{thm1} and Theorem \ref{thm3} are the same as  those of Theorem 4.1 and Theorem 5.3 in \cite{DiazMatias}.  However, they are shown under different assumptions - we will prove this theorems under the assumptions that the Markov chain has countably many states (though with a very specific transition matrix), whereas in the original paper only a finite number of states was considered.

\section{Proof of Theorem 1}
In general, we will follow the proof of \cite{DiazMatias}, but with a few modifications necessary due to the countable support of the chain $\ff$. For clarity, we divide the proof into several auxiliary lemmas, which we prove later.

\begin{lemma}\label{L1}
For any probability measure $\hat\nu$ defined on $(\hat{M},\hat{\mathcal{S}})$ and any continuous and bounded function $\hat{h}:\hat{M}\to \R$ we have
\begin{align*}
\int\limits_{\hat{M}} \hat{h}\dd \hat\nu = \sum_{j=0}^\infty \int\limits_I h_j \dd\nu_j,
\end{align*}
where $h_j$ is a function from $I$ to $\R$ defined as $h_j(x)=\hat{h}(j,x)$ for $j\in\nn$.
\end{lemma}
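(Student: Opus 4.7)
The plan is to establish the identity by the standard measure-theoretic ladder, starting from indicators and extending through nonnegative bounded measurable functions. Continuity of $\hat{h}$ plays no role; only measurability and boundedness matter. First, I would observe that $\hat{M}$ decomposes into the countable disjoint union $\hat{M} = \bigsqcup_{j=0}^\infty \bigl(\{j\}\times I\bigr)$. For an indicator $\hat{h} = \mathbf{1}_{\hat{B}}$ with $\hat{B}\in\hat{\mathcal{S}}$, the decomposition $\hat{B} = \bigsqcup_{j=0}^\infty \{j\}\times B_j$ recorded in the preamble, together with countable additivity of $\hat{\nu}$, gives $\int_{\hat{M}}\hat{h}\,\dd\hat{\nu} = \hat{\nu}(\hat{B}) = \sum_{j=0}^\infty \nu_j(B_j)$. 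Since $h_j = \mathbf{1}_{B_j}$, we also have $\int_I h_j\,\dd\nu_j = \nu_j(B_j)$, so the identity holds for indicators and, by linearity, for all nonnegative simple functions on $\hat{M}$.

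Next, for a nonnegative bounded measurable $\hat{h}$, I would pick an increasing sequence of simple functions $\hat{s}_k \uparrow \hat{h}$ with $0\le \hat{s}_k\le \|\hat{h}\|_\infty$ and apply monotone convergence twice. On the left, $\int_{\hat{M}} \hat{s}_k\,\dd\hat{\nu}\to \int_{\hat{M}} \hat{h}\,\dd\hat{\nu}$. On each slice, $s_{k,j}\uparrow h_j$ yields $\int_I s_{k,j}\,\dd\nu_j \to \int_I h_j\,\dd\nu_j$. The interchange of $\lim_k$ with the sum over $j$ is then justified by monotone convergence on $\nn$ equipped with counting measure (equivalently, by dominated convergence using the summand $\|\hat{h}\|_\infty\,\nu_j(I)$, which is summable because $\sum_{j=0}^\infty \nu_j(I) = \hat{\nu}(\hat{M}) = 1$).

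Finally, a general bounded measurable $\hat{h}$ splits as $\hat{h} = \hat{h}^+ - \hat{h}^-$; applying the previous step to each piece and subtracting yields the claim, with boundedness of $\hat{h}$ and finiteness of $\hat{\nu}$ guaranteeing that every series in sight is absolutely convergent. The only even mildly nontrivial point in the whole argument is the sum/limit interchange above, and it is handled by the uniform bound just noted; beyond that the lemma is a direct consequence of the definition $\nu_j(C)=\hat{\nu}(\{j\}\times C)$, so I do not foresee any real obstacle.
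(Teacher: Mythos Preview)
Your proposal is correct and follows essentially the same route as the paper: both decompose $\hat{M}=\bigcup_{j}\{j\}\times I$, verify the identity for indicators via $\hat{\nu}(\hat{B})=\sum_j\nu_j(B_j)$, and then extend by the standard measure-theoretic ladder. You spell out the monotone/dominated convergence steps more carefully than the paper, which simply invokes ``standard techniques of the measure theory'' after the indicator case.
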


Using this lemma we can write for any $n\in\N$
\begin{equation}\label{leftSide}
\int\limits_{\hat{M}} \hat{h}\dd T^n\hat{\mu} = \sum_{j=0}^\infty \int\limits_I h_j \dd(T^n\hat{\mu})_j.
\end{equation}
Similarly, we can decompose the integral on the right-hand side of the equation~(\ref{mainEq}) as 
\begin{equation}\label{rightSide}
\int\limits_{\hat{M}} \hat{h}\dd\hat\pi\odot\ppp \stackrel{(a)}{=}
\int\limits_\Sigma \hat{h}\circ\hat\pi\dd\ppp \stackrel{(b)}{=}
\sum_{j=0}^\infty \int\limits_{[j]}\hat{h}\circ\hat{\pi}\dd\ppp\stackrel{(c)}{=}
\sum_{j=0}^\infty \int\limits_{[j]}h_j\circ\pi\dd\ppp,
\end{equation}
where in the equality $(a)$ we used change of variables for the pushforward measure, in the equality $(b)$ the $\sigma\text{-additivity}$ of the Lebesgue integral and in the equality $(c)$ the fact, that for any $\omega\in[j]=\{\omega=(\omega_0,\omega_1,\ldots):\omega_0=j\}$ 
\begin{align*}
\hat{h}\circ\hat{\pi}(\omega)=\hat{h}(j,\pi(\omega))=h_j(\pi(\omega))=h_j\circ\pi(\omega). 
\end{align*}
%from the definition of the function $\hat{\pi}$.

The key to the proof of Theorem~\ref{thm1} is the following lemma. % which relates to the terms of the sums~(\ref{leftSide}) and~(\ref{rightSide}). 
%We will present the proof of the lemma later due to its length.
\begin{lemma}\label{L2}
If $\ppp(S)=1$, then for any continuous and bounded function $h: I\to \R$
\begin{align*}
\lim_{n\to\infty} \int\limits_I h\dd(T^n\hat{\mu})_j = \int\limits_{[j]}h\circ \pi \dd\ppp
\end{align*}
for any $j\in\nn$.
\end{lemma}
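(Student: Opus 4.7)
The plan is to rewrite $\int_I h\dd(T^n\hat{\mu})_j$ as a path-space expectation, use time reversal together with the assumption $\ppp(S)=1$ to identify the limit, and use total-variation mixing of the Markov chain $\ff$ to remove the restriction to a stationary initial distribution.

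First I would unwind the definition of $T$ and the process $\{\hat{Z}_n\}$ to obtain
\[
\int_I h\dd(T^n\hat{\mu})_j \;=\; E^{\hat{\mu}}\bigl[\mathbf{1}_{\omega_n=j}\;h\bigl(f_{\omega_n}\circ f_{\omega_{n-1}}\circ\ldots\circ f_{\omega_1}(X_1)\bigr)\bigr],
\]
where $(\omega_0,X_1)\sim\hat{\mu}$ and $\omega_1,\omega_2,\ldots$ evolves under $P$. Observe that the composition reads the chain indices from outermost $\omega_n$ inward to $\omega_1$, so by the identity $\ppp([\xi_0,\ldots,\xi_{n-1}])=\pp([\xi_{n-1},\ldots,\xi_0])$ proved in the preliminaries, under the stationary initial $\omega_0\sim m$ the joint law of $(\omega_n,\omega_{n-1},\ldots,\omega_{n-M+1})$ coincides with the $\ppp$-law of $(\xi_0,\ldots,\xi_{M-1})$. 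For an arbitrary initial, the Markov property together with $\|\mathrm{Law}(\omega_{n-M+1})-m\|_{TV}\to 0$ (granted by $p_0>0$, $E<\infty$ and Theorems~8.1-8.2 of \cite{BM}) upgrades this to TV convergence of the joint law to the same $\ppp$-distribution.

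Given $\epsilon>0$, exploiting $\ppp(S)=1$ and the monotone nesting $I^{M+1}_\xi\subseteq I^M_\xi$, I can choose $M$ so large that $\ppp(\di(f_{\xi_0}\circ\ldots\circ f_{\xi_{M-1}}(I))>\epsilon)<\epsilon$. Since $f_{\omega_n}\circ\ldots\circ f_{\omega_1}(X_1)$ always lies in $f_{\omega_n}\circ\ldots\circ f_{\omega_{n-M+1}}(I)$, replacing this value by the representative $f_{\omega_n}\circ\ldots\circ f_{\omega_{n-M+1}}(x_0)$ for any fixed $x_0\in I$ incurs an error bounded by the diameter of that set, which by uniform continuity of $h$ on the compact $I$ translates into an $O(\epsilon)$ error in the expectation. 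The replaced integrand depends only on the joint law of $(\omega_n,\ldots,\omega_{n-M+1})$, so the total variation bound from the previous paragraph lets me pass, as $n\to\infty$, to $E^{\ppp}[\mathbf{1}_{\xi_0=j}h(f_{\xi_0}\circ\ldots\circ f_{\xi_{M-1}}(x_0))]$. Sending $M\to\infty$ and invoking $\ppp(S)=1$ once more replaces the representative by $h(\pi(\xi))$, yielding the desired $\int_{[j]}h\circ\pi\dd\ppp$.

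The main obstacle is combining the two ingredients in the countably infinite setting. In \cite{DiazMatias} the finite state space makes both the TV mixing of the chain and the uniform diameter bound automatic; here the first must be recovered from the positive recurrence given by $p_0>0$ and $E<\infty$, while the second requires controlling the tail probabilities $\ppp(\di(I^M_\xi)>\epsilon)$ uniformly in $M$, which fortunately follows by monotone convergence from $\ppp(S)=1$. The bookkeeping around the ``left-over'' initial segment $f_{\omega_{n-M}}\circ\ldots\circ f_{\omega_1}(X_1)$ must be handled with care so that the resulting error is controlled purely through the diameter estimate and not through any direct use of $X_1$.
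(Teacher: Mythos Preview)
Your proposal is correct and rests on the same three ingredients as the paper---total-variation mixing of the driving chain towards $m$, the time-reversal identity $\ppp([\xi_0,\ldots,\xi_{n-1}])=\pp([\xi_{n-1},\ldots,\xi_0])$, and the diameter control coming from $\ppp(S)=1$---but the packaging is different. The paper follows \cite{DiazMatias} rather literally: it first uses Corollary~\ref{C1} to replace $\hat\mu$ by $\hat\nu=T^{n_1}\hat\mu$ whose first marginals $(\nu_i(I))$ are close to $m$, then expands $(T^n\hat\nu)_j$ via Lemma~\ref{L4} as an explicit path sum, truncates the starting state $\xi_n$ at a level $M$, introduces auxiliary simple functions $\bar G_n$, and applies Lemma~\ref{L52} with dominated convergence. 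You instead fix the depth $M$ of the backward composition, control the replacement error by $\ppp(\di(I^M_\xi)>\epsilon)$, and pass to the limit $n\to\infty$ through a single TV estimate on the joint law of the last $M$ chain coordinates; this sidesteps the separate state-space truncation and the normalisation $\bar\nu_i=\nu_i/\nu_i(I)$ that the paper needs to keep Lemma~\ref{L52} applicable. The one point you should make precise, and which you already flag, is that the diameter-in-probability bound is stated under $\ppp$ while the replacement error is initially measured under the non-stationary law of $(\omega_n,\ldots,\omega_{n-M+1})$; the fix is simply to invoke the TV convergence first to transfer the bound, after which your double-limit $n\to\infty$ then $M\to\infty$ closes cleanly.
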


Finally, to avoid the problems with changing the order of the sum and the integral, we show in Lemma~\ref{L3} that from some term onwards both sums are already negligible.
\begin{lemma}\label{L3}
For a given continuous and bounded function $\hat{h}:\hat{M}\to \R$, probability measure $\hat{\mu}$ on $(\hat{M},\hat{\mathcal{S}})$ and any $\varepsilon>0$ there exist integers $N,M$ such that 
\small
\begin{align*}
\left|\sum_{j=M}^\infty \int\limits_{[j]}h_j\circ\pi\dd\ppp \right|&<\varepsilon,\\
\left|\sum_{j=M}^\infty \int\limits_I h_j\dd(T^n\hat\mu)_j \right|&<\varepsilon
\end{align*}
\normalsize
for any $n\ge N$.
\end{lemma}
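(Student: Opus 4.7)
The plan is to bound each tail sum by $K := \sup_{\hat m\in\hat M}|\hat h(\hat m)|$ (finite, since $\hat h$ is bounded) times a corresponding ``mass tail'', and then drive both tails to zero. The first inequality is the easier half: because $|h_j\circ\pi|\le K$ on each cylinder $[j]$,
\begin{align*}
\left|\sum_{j=M}^\infty \int\limits_{[j]} h_j\circ \pi \dd \ppp\right| \le K\sum_{j=M}^\infty \ppp([j]) = K\sum_{j=M}^\infty m_j,
\end{align*}
and since $\sum_{j=0}^\infty m_j=1$, any prescribed $\varepsilon$-bound is obtained by taking $M$ large enough. I would fix $M$ with $\sum_{j=M}^\infty m_j<\varepsilon/(2K)$ and then work on the second inequality, possibly enlarging $M$ at the end.

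For the second sum, the same crude bound gives
\begin{align*}
\left|\sum_{j=M}^\infty \int\limits_I h_j \dd \tn\right| \le K\sum_{j=M}^\infty \tn(I).
\end{align*}
Set $\beta_j^{(n)}:=\tn(I)$ and $\alpha_i:=\hat\mu(\{i\}\times I)$. The first key step is to identify the evolution of the first-coordinate marginal: inserting $\hat B=\{k\}\times I$ into~(\ref{operator T}) (and using $f_k^{-1}(I)=I$) gives $\beta_k^{(n+1)}=\beta_{k+1}^{(n)}+p_k\beta_0^{(n)}$, which is exactly the action of the transition matrix $P$, so $\beta^{(n)}=\alpha\cdot P^n$.

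Next I would invoke the total variation convergence recalled after~(\ref{MPrechodu}): for each $i\in\nn$, $\sum_j|P^n(i,j)-m_j|\to 0$ as $n\to\infty$. Since each such quantity is dominated by the constant $2$, dominated convergence applied to the counting measure with weights $\alpha_i$ yields
\begin{align*}
\sum_j|\beta_j^{(n)}-m_j| \le \sum_{i=0}^\infty \alpha_i \sum_j|P^n(i,j)-m_j| \xrightarrow{n\to\infty} 0.
\end{align*}
From here the triangle inequality
\begin{align*}
\sum_{j=M}^\infty \beta_j^{(n)} \le \sum_{j=M}^\infty m_j + \sum_j|\beta_j^{(n)}-m_j|
\end{align*}
lets me pick $N$ so large that the second term is below $\varepsilon/(2K)$ for all $n\ge N$, and then (enlarging $M$ if necessary) both pieces are below $\varepsilon/(2K)$, completing the estimate.

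The main obstacle is the passage from the chain's pointwise (in the starting state) total variation convergence to convergence of the first-coordinate marginal of $T^n\hat\mu$ for an arbitrary initial distribution $\alpha$. Because the state space of $\ff$ is countably infinite, there is no uniform-in-$i$ TV convergence to rely on, and the argument must route through dominated convergence in the $i$-variable; the uniform bound by $2$ on the TV distance between probability measures supplies exactly the domination needed.
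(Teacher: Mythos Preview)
Your proof is correct and follows essentially the same approach as the paper: bound each tail by $K$ times a mass tail, control the stationary tail $\sum_{j\ge M} m_j$ directly, and control the evolving tail via the total variation convergence $\sum_j\bigl|(T^n\hat\mu)_j(I)-m_j\bigr|\to 0$ combined with the triangle inequality. The only difference is organizational: the paper isolates your dominated-convergence argument for the first-coordinate marginal as a separate result (Corollary~\ref{C1}, proved in the Appendix via Lemma~\ref{L4}) and then cites it in the proof of Lemma~\ref{L3}, whereas you derive the marginal evolution $\beta^{(n)}=\alpha P^n$ directly from the operator~(\ref{operator T}) and inline the convergence argument.
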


The proof of Theorem~\ref{thm1} is then just a matter of combining these lemmas.  

\begin{proof}[Proof of Theorem~\ref{thm1}]
Using equations~(\ref{leftSide}) and~(\ref{rightSide}) it is sufficient to show that for any $\varepsilon>0$ there exists an $N\in \N$ such that 
\begin{align*}
\left|
\sum_{j=0}^\infty \int\limits_I h_j \dd (T^n\hat{\mu})_j -  
\sum_{j=0}^\infty \int\limits_{[j]}h_j\circ\pi\dd\ppp
\right|<\varepsilon
\end{align*}
for any $n\ge N$. Next, based on Lemma~\ref{L3}, we can choose integers $n_1$ and $M$ for which
\begin{align*}
\left|\sum_{j=M}^\infty \int\limits_{[j]} h_j\circ\pi \dd\ppp \right|&<\frac{\varepsilon}{3} ,\\
\left|\sum_{j=M}^\infty \int\limits_I h_j\dd(T^n\hat\mu)_j\right| &<\frac{\varepsilon}{3} 
\end{align*}
for any $n\ge n_1$. Finally, Lemma~\ref{L2} guarantees the existence of an integer $N\ge n_1$ satisfying 
\begin{align*}
\left|
\int\limits_I h_j \dd (T^n\hat{\mu})_j - \int\limits_{[j]}h_j\circ\pi\dd\ppp
\right| < \frac{\varepsilon}{3M}
\end{align*}
for any $n\ge N$ and every $j=0,1,\ldots,M-1$. Then for any $n\ge N$ we have  
\small
\begin{align*}
&\left|
\sum_{j=0}^\infty \int\limits_I h_j \dd (T^n\hat{\mu})_j -  
\sum_{j=0}^\infty \int\limits_{[j]}h_j\circ\pi\dd\ppp
\right| = \\
&=
\left|
\sum_{j=0}^{M-1} \int\limits_I h_j \dd (T^n\hat{\mu})_j -  
\sum_{j=0}^{M-1} \int\limits_{[j]}h_j\circ\pi\dd\ppp + 
\sum_{j=M}^\infty \int\limits_I h_j \dd (T^n\hat{\mu})_j -  
\sum_{j=M}^\infty \int\limits_{[j]}h_j\circ\pi\dd\ppp
\right|\le\\
&\le
\sum_{j=0}^{M-1} \left|\int\limits_I h_j \dd (T^n\hat{\mu})_j - \int\limits_{[j]}h_j\circ\pi\dd\ppp\right| + 
\left| \sum_{j=M}^\infty \int\limits_I h_j \dd (T^n\hat{\mu})_j \right| + 
\left| \sum_{j=M}^\infty \int\limits_{[j]}h_j\circ\pi\dd\ppp \right| <\\
&< 
M\frac{\varepsilon}{3M} + \frac{\varepsilon}{3} + \frac{\varepsilon}{3} =\varepsilon, 
\end{align*}
\normalsize
which concludes the proof.
\end{proof}

\subsection{Proofs of the auxiliary lemmas}
For the proof of Lemma~\ref{L1}, it suffices to use some of the properties of the Lebesgue integral. For completeness, we provide it in the Appendix. 

In the proof of Lemmas~\ref{L2} and~\ref{L3} we will also need the following lemma and its corollary. 
\newcommand{\sumy}{\sum_{\xi_n=0}^\infty\ldots\sum_{\xi_1=0}^\infty}
\newcommand{\pravd}{p_{\xi_n\xi_{n-1}}\cdot\ldots\cdot p_{\xi_2\xi_1}\cdot p_{\xi_1j}}
\newcommand{\fcie}{f_j\circ f_{\xi_1}\circ\ldots\circ f_{\xi_{n-1}}}

\begin{lemma}\label{L4}
For any $C\in\B$ and any probability measure $\hat{\mu}$ on $(\hat{M},\hat{\mathcal{S}})$
\small
\begin{align}
\label{vzL4}
(T^n\hat{\mu})_j(C)=\sumy \pravd\cdot(\fcie)\odot \mu_{\xi_n}(C).
\end{align}
\normalsize
\end{lemma}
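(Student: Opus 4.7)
My plan is to prove the formula by induction on $n\ge 1$, working directly from the explicit form~(\ref{operator T}) of the Markov operator. For the base case $n=1$, I would specialize~(\ref{operator T}) to the cylinder $\hat B=\{j\}\times C$: writing this set as $\bigcup_{k=0}^\infty\{k\}\times B_k$ with $B_j=C$ and $B_k=\emptyset$ for $k\neq j$, only the term indexed by $j$ in the outer sum of~(\ref{operator T}) survives, giving
\[
(T\hat\mu)_j(C)=\mu_{j+1}(f_j^{-1}(C))+p_j\mu_0(f_j^{-1}(C)).
\]
Since $p_{\xi_1 j}=0$ unless $\xi_1\in\{0,j+1\}$, with $p_{j+1,j}=1$ and $p_{0j}=p_j$, this coincides with $\sum_{\xi_1=0}^\infty p_{\xi_1 j}\cdot f_j\odot\mu_{\xi_1}(C)$, which is the right-hand side of~(\ref{vzL4}) at $n=1$.

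For the inductive step, I will assume~(\ref{vzL4}) at level $n$ and apply the base-case identity with $\hat\mu$ replaced by $T^n\hat\mu$, obtaining
\[
(T^{n+1}\hat\mu)_j(C)=(T^n\hat\mu)_{j+1}(f_j^{-1}(C))+p_j(T^n\hat\mu)_0(f_j^{-1}(C)).
\]
Into each summand I plug the inductive hypothesis, with $C$ replaced there by $f_j^{-1}(C)$. The identity $(f_{j+1}\circ f_{\xi_1}\circ\ldots\circ f_{\xi_{n-1}})^{-1}(f_j^{-1}(C))=(f_j\circ f_{j+1}\circ f_{\xi_1}\circ\ldots\circ f_{\xi_{n-1}})^{-1}(C)$, together with its analogue obtained by replacing $f_{j+1}$ by $f_0$, rewrites both summands as $n$-fold sums whose compositions now begin with $f_j$ on the outside, matching the shape required by~(\ref{vzL4}) at level $n+1$.

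The remaining step is a relabelling of summation indices. I introduce new indices by setting $\eta_1:=j+1$ in the first piece (respectively $\eta_1:=0$ in the second), and $\eta_{k+1}:=\xi_k$ for $k=1,\ldots,n$. The $n$-fold product of transition probabilities then reads $p_{\eta_{n+1}\eta_n}\cdots p_{\eta_2\eta_1}$, which is short by exactly one factor compared with the $(n+1)$-fold product that should appear. In the first piece the missing factor is $1=p_{j+1,j}=p_{\eta_1 j}$, so I can insert it for free; in the second piece the external factor $p_j$ equals $p_{0,j}=p_{\eta_1 j}$, which absorbs in the same way. Because $p_{\eta_1 j}$ vanishes for every other $\eta_1\in\nn$, extending the $\eta_1$-sum to all of $\nn$ costs nothing, and the two contributions merge into the single $(n+1)$-fold sum on the right-hand side of~(\ref{vzL4}). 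I do not anticipate a genuine obstacle here: the whole argument is bookkeeping of indices plus the elementary observation that pulling through $f_j^{-1}$ attaches $f_j$ to the outside of the existing composition.
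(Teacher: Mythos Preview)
Your proposal is correct and follows essentially the same induction as the paper's proof. The only cosmetic difference is the direction in which the induction step is unfolded: the paper writes $T^{n+1}\hat\mu=T^n(T\hat\mu)$, applies the hypothesis to the measure $T\hat\mu$, and then expands $(T\hat\mu)_{\xi_n}$ via the base case (adding the new index $\xi_{n+1}$ on the inner end), whereas you write $T^{n+1}\hat\mu=T(T^n\hat\mu)$, apply the base case first, and then insert the hypothesis (adding the new index $\eta_1$ next to~$j$); both routes amount to the same bookkeeping, and the reordering of sums that your relabelling implicitly requires is harmless because, as the paper also notes, only finitely many terms are nonzero for fixed $n$ and~$j$.
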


\begin{corollary}
\label{C1}
For any probability measure $\hat{\mu}$ on $(\hat{M},\hat{\mathcal{S}})$
\begin{align*}
\lim_{n\to\infty}\sum_{j=0}^\infty \left|(T^n\hat\mu)_j(I)-m_j\right| = 0.
\end{align*}
\end{corollary}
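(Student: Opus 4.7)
The plan is to reduce the statement to the total-variation convergence of the one-dimensional Markov chain $\ff$ that was already quoted (from Berbee--Mandelbaum) just after the definition of the invariant distribution $m$.

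First I would apply Lemma~\ref{L4} with $C = I$. Since every $f_j$ maps $I$ into $I$, the preimage $(\fcie)^{-1}(I) \cap I$ equals $I$, so the pushforward reduces to $(\fcie)\odot\mu_{\xi_n}(I) = \mu_{\xi_n}(I)$. Writing $\alpha_i := \mu_i(I) = \hat\mu(\{i\}\times I)$, the sequence $(\alpha_i)_{i\in\nn}$ is a probability distribution on $\nn$ (the first-coordinate marginal of $\hat\mu$). Lemma~\ref{L4} then yields
$$(T^n\hat\mu)_j(I) = \sumy \pravd\cdot \alpha_{\xi_n}.$$
Setting $i = \xi_n$ and relabelling $k_s = \xi_{n-s}$, the inner multiple sum is exactly the Chapman--Kolmogorov expansion of the $n$-step transition probability $p^{(n)}_{ij}$ of the chain with matrix $P$, so the expression simplifies to $\sum_{i=0}^\infty \alpha_i\, p^{(n)}_{ij}$.

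Second, using $\sum_i \alpha_i = 1$ to rewrite $m_j = \sum_i \alpha_i m_j$, the triangle inequality together with non-negativity (Tonelli) gives
$$\sum_{j=0}^\infty \left|(T^n\hat\mu)_j(I) - m_j\right| \;\le\; \sum_{i=0}^\infty \alpha_i \sum_{j=0}^\infty \left|p^{(n)}_{ij} - m_j\right|.$$
For every fixed $i$ the inner sum tends to $0$ as $n\to\infty$ by the cited total-variation convergence, and it is trivially bounded by $2$.

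The hard part is only the interchange of the limit and the outer sum. I would handle it by dominated convergence on $\nn$ with the weighted counting measure $(\alpha_i)_{i\in\nn}$: pointwise convergence in $i$ together with the integrable dominator $2\alpha_i$ suffice. Equivalently, a direct $\varepsilon$-split works: choose $M$ with $\sum_{i\ge M}\alpha_i < \varepsilon/4$, then pick $N$ so large that $\sum_j|p^{(n)}_{ij} - m_j| < \varepsilon/2$ for every $i<M$ and $n\ge N$, and combine. The real content of the argument is the identification of the first-coordinate marginal of $T^n\hat\mu$ with the $n$-step evolution of $\ff$ started from the initial distribution $(\alpha_i)$; once that is made, the corollary is essentially immediate from the known convergence of $\ff$ to $m$.
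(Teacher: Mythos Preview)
Your proposal is correct and follows essentially the same approach as the paper: reduce via Lemma~\ref{L4} to $(T^n\hat\mu)_j(I)=\sum_i \mu_i(I)\,p^{(n)}_{ij}$, insert $m_j=\sum_i\mu_i(I)\,m_j$, apply the triangle inequality and swap sums, then interchange limit and sum by dominated convergence with the bound $2\mu_i(I)$ and invoke the total-variation convergence of $\ff$. One small slip: the reference \cite{BM} quoted in the paper is Bhattacharya--Majumdar, not Berbee--Mandelbaum.
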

The proofs of Lemma~\ref{L4} and Corollary~\ref{C1} are included in the Appendix.

The proof of Lemma~\ref{L2} is relatively long and technical, so for clarity we first give a brief sketch of the proof. %briefly sketch how the proof will proceed.

Given a continuous and bounded function $h:I\to \R$, a probability measure $\hat{\mu}$ on $(\hat{M},\hat{\mathcal{S}})$ and $j\in\nn$, we want to show that 
\begin{align*}
\lim_{n\to\infty} \int\limits_I h\dd(T^n\hat{\mu})_j = \int\limits_{[j]} h\circ\pi \dd\ppp. 
\end{align*}
Take $n^\star=n_1+n$ sufficiently large such that for the measure $\hat{\nu}\equiv T^{n_1}\hat\mu$ we have that the distribution $\left(\nu_i(I)\right)_{i=0}^\infty$ is sufficiently close to  $(m_i)_{i=0}^\infty$ (that can be done thanks to the Corollary~\ref{C1}). Next, using Lemma~\ref{L4} we can write
\begin{align*}
&\int\limits_I h\dd(T^{n^\star}\hat\mu)_j=\int\limits_I h\dd(T^n(T^{n_1}\hat\mu))_j =\int\limits_I h\dd(T^n\hat\nu)_j=\\
&=\sumy\pravd \cdot \int\limits_I h\dd(\fcie)\odot\nu_{\xi_n}\approx(\star). 
\end{align*}
Note that the term $\pravd$ corresponds to an $n\text{-steps}$ transition probability from the state $\xi_n$ to the state $j$ and there are only finitely many admissible paths to the state $j$ after $n\text{-steps}$. Therefore, although the expression looks like a countable sum, it is in fact a finite sum, because only a finite number of terms is non-zero. 

Next, using change of variables for the pushforward measure we can write 
\begin{align*}
\label{longInt}
\int\limits_I h\dd(\fcie)\odot\nu_{\xi_n}=\int\limits_I h\circ\fcie\dd\nu_{\xi_n}.
\end{align*}

Since for every $\xi=(\xi_0,\xi_1,\ldots)\in S$ the limit
\begin{align*}
\pi(\xi)=\lim_{n\to\infty} f_{\xi_0}\circ\ldots\circ f_{\xi_n}(x)
\end{align*}
exists regardless of $x$, the value $h\circ\fcie$ can be for sufficiently large $n$ approximated by the value $h\circ \pi(\xi^\star_{(j,\xi_1,\ldots,\xi_{n-1})})$, where $\xi^\star_{(j,\xi_1,\ldots,\xi_{n-1})}$ is an element of the set $[j,\xi_1,\ldots,\xi_{n-1}]\cap S$. Therefore we can use the approximation
\begin{align*}
\int\limits_I h\circ\fcie\dd\nu_{\xi_n}&\approx 
h\circ\pi(\xi^\star_{(j,\xi_1,\ldots,\xi_{n-1})})\int\limits_I 1 \dd\nu_{\xi_n}=\\
&=h\circ\pi(\xi^\star_{(j,\xi_1,\ldots,\xi_{n-1})})\cdot\nu_{\xi_n}(I).
\end{align*}
More precisely, the following lemma from \cite{DiazMatias} can be used to justify this approximation. 
\begin{lemma}(\cite{DiazMatias} Lemma 5.2.)
\label{L52}
For any sequence $\{\nu_n\}_{n=0}^\infty$ of probability measures on $\left(I,\mathcal{B}(I)\right)$, any $\omega\in S$ and any continuous and bounded function $h:I\to\R$
\begin{align*}
\lim_{n\to\infty} \int\limits_{I} h \dd  \left(f_{\omega_0}\circ\ldots\circ f_{\omega_n}\right)\odot \nu_n=h\circ \pi(\omega).
\end{align*}
\end{lemma}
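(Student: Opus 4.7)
The plan is to reduce the statement to a uniform estimate on the integrand using the change of variables formula for pushforward measures, and then exploit the fact that $\omega \in S$ forces the image $f_{\omega_0}\circ\ldots\circ f_{\omega_n}(I)$ to shrink to the single point $\pi(\omega)$.

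First I would rewrite the integral by the standard change of variables for pushforward measures:
\begin{align*}
\int\limits_I h\dd(f_{\omega_0}\circ\ldots\circ f_{\omega_n})\odot\nu_n
=\int\limits_I h\circ(f_{\omega_0}\circ\ldots\circ f_{\omega_n})\dd\nu_n.
\end{align*}
Since each $\nu_n$ is a probability measure, we can add and subtract the constant $h\circ\pi(\omega)$ to obtain
\begin{align*}
\left|\int\limits_I h\circ(f_{\omega_0}\circ\ldots\circ f_{\omega_n})\dd\nu_n - h\circ\pi(\omega)\right|
\le\int\limits_I\bigl|h(f_{\omega_0}\circ\ldots\circ f_{\omega_n}(x))-h(\pi(\omega))\bigr|\dd\nu_n(x).
\end{align*}
Thus it suffices to show that the integrand tends to $0$ uniformly in $x\in I$.

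The key step uses the assumption $\omega\in S$. By definition, for every $x\in I$ the point $f_{\omega_0}\circ\ldots\circ f_{\omega_n}(x)$ lies in $I_\omega^{n+1}$, and $\pi(\omega)\in I_\omega^{n+1}$ as well (since the intersection $I_\omega=\bigcap_n I_\omega^n$ is the singleton $\{\pi(\omega)\}$). Therefore
\begin{align*}
\bigl|f_{\omega_0}\circ\ldots\circ f_{\omega_n}(x)-\pi(\omega)\bigr|\le \di(I_\omega^{n+1}),
\end{align*}
and by definition of $S$ the right-hand side tends to $0$ as $n\to\infty$, independently of $x$. Since $I$ is compact and $h$ is continuous, $h$ is uniformly continuous on $I$, so for any $\varepsilon>0$ there exists $\delta>0$ with $|h(y)-h(z)|<\varepsilon$ whenever $|y-z|<\delta$. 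Choosing $n$ large enough that $\di(I_\omega^{n+1})<\delta$ yields $|h(f_{\omega_0}\circ\ldots\circ f_{\omega_n}(x))-h(\pi(\omega))|<\varepsilon$ uniformly in $x$. Integrating against $\nu_n$ (a probability measure) gives a bound of $\varepsilon$ on the right-hand side of the displayed inequality above, which completes the argument.

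I do not expect a serious obstacle here: the essential observation is that uniformity in $x$ comes for free from the shrinking of $I_\omega^{n+1}$ to a point, and the boundedness of $h$ is only a safety net (its role is already covered by uniform continuity on the compact interval $I$). The only subtle point to state carefully is that $\pi(\omega)$ itself lies in every $I_\omega^{n+1}$, which follows directly from $\{\pi(\omega)\}=\bigcap_n I_\omega^n$ and the nesting $I_\omega^{n+1}\subseteq I_\omega^n$.
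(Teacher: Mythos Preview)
Your proof is correct and follows essentially the same approach as the paper's: rewrite the integral via the pushforward change of variables, then use that $f_{\omega_0}\circ\ldots\circ f_{\omega_n}(x)\to\pi(\omega)$ uniformly in $x$ together with continuity of $h$ to bound the integrand by~$\varepsilon$, and conclude since each $\nu_n$ is a probability measure. The only cosmetic difference is that you invoke uniform continuity of $h$ on the compact interval $I$, whereas the paper uses merely continuity of $h$ at the single point $\pi(\omega)$; both are adequate here.
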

Lemma~\ref{L52} is taken from \cite{DiazMatias}. For the completeness we present its proof in the Appendix.

Finally, $\nu_{\xi_n}(I)$ can be approximated by the value $m_{\xi_n}$, hence we can write 
\begin{align*}
(\star)&\approx \sumy m_{\xi_n}\cdot\pravd \cdot h\circ \pi(\xi^\star_{(j,\xi_1,\ldots,\xi_{n-1})})=\\&=
\sumy \ppp([j,\xi_1,\ldots,\xi_{n-1},\xi_n])\cdot h\circ \pi(\xi^\star_{(j,\xi_1,\ldots,\xi_{n-1})})\approx \int\limits_{[j]} h\circ \pi \dd\ppp.
\end{align*}
Now we will proceed to the  exact proof of Lemma~\ref{L2}.

\begin{proof}[Proof of Lemma~\ref{L2}]
Given $\varepsilon>0$ we want to show that there exists an integer $N$ such that
\begin{align*}
\left| \int\limits_{I} h\dd(T^n\hat{\mu})_j - \int\limits_{[j]}h\circ \pi \dd\ppp\right|<\varepsilon
\end{align*}
for every $n\ge N$. Since $\sum_{i=0}\limits^\infty m_i = 1$ and $||h||\equiv \max\limits_{x\in I} h(x)$ is finite, there exists an~integer~$M$ such that 
\begin{equation}\label{chvost1}
\sum_{i=M}^\infty m_i <\frac{\varepsilon}{5||h||}. 
\end{equation}
Next, using Corollary~\ref{C1}, there exists an integer $n_1$ such that 
\begin{align*}
|(T^{n_1}\hat{\mu})_i(I)-m_i|<\frac{\varepsilon^\star}{M}
\end{align*}
for every $i=0,1,\ldots,M-1$, where 
\begin{align*}
\textstyle \varepsilon^\star = \min\left(
\frac{m^\star}{2},\frac{\varepsilon}{5||h||}
\right)\quad \text{and} \quad m^\star = \min\{m_i: i=0,1,\ldots, M-1\}.
\end{align*}
The choice of $\varepsilon^\star$ ensures that for the measure $\hat{\nu}\equiv T^{n_1}\hat{\mu}$ the following hold:
\begin{itemize}
\item We have
\begin{equation}
\label{stac}
\sum_{i=0}^{M-1} \left|\nu_i(I) - m_i\right| < \varepsilon^\star \le \frac{\varepsilon}{5||h||}.
\end{equation}
\item For $i=0,1,\ldots M-1$ 
 \[\nu_i(I)>m_i-\frac{\varepsilon^\star}{M}\ge m_i-\varepsilon^\star \ge m_i-\frac{m^\star}{2}\ge m_i-\frac{m_i}{2}>0.
\] Thus, it is possible to define  probability measures $\bar\nu_i$ for $i=0,1,\ldots,M-1$ as 
\begin{align*}
\bar\nu_i(C)=\frac{\nu_i(C)}{\nu_i(I)},
\end{align*}
for every $C\in \B$. (This will be useful for the application of Lemma~\ref{L52}.) 
\item Since $\nu_i(I)>m_i-\frac{\varepsilon^\star}{M}$ for every $i=0,1,\ldots,M-1$, we obtain
\small
\begin{align*}
\sum_{i=0}^{M-1}\nu_i(I)>\sum_{i=0}^{M-1} \left(m_i-\frac{\varepsilon^\star}{M}\right)=\sum_{i=0}^{M-1}m_i-\varepsilon^\star\stackrel{(a)}{>}\left(1-\frac{\varepsilon}{5||h||}\right)-\frac{\varepsilon}{5||h||}=1-\frac{2\varepsilon}{5||h||}.
\end{align*}
\normalsize
The first term in the inequality $(a)$ follows from (\ref{chvost1}), and the second term follows from the definition of~$\varepsilon^\star$.
\item From the preceding information and the fact that $(\nu_i(I))_{i=0}^\infty$ is a probability distribution we have
\begin{equation}
\label{chvost2}
\sum_{i=M}^\infty \nu_i(I) < \frac{2\varepsilon}{5||h||}.
\end{equation}
\end{itemize}
Next, using probability measures $\bar\nu_i$ for $i=0,1,\ldots,M-1$, we consider a sequence of functions $\{\bar{G}_n\}_{n=1}^\infty$ defined for $\xi=(\xi_0,\xi_1,\ldots)$ by the formula
\begin{align*}
\bar{G}_n(\xi) = 
\begin{cases}
\int\limits_{I} h\dd(\fcie)\odot\bar\nu_{\xi_n} & \text{if }\xi_n < M, \\ 
\int\limits_{I} h\dd(\fcie)\odot\bar\nu_{\xi_{M-1}} & \text{if }\xi_n \ge M.
\end{cases} 
\end{align*}
Applying Lemma~\ref{L52}, we get that $\ppp\text{-almost everywhere}$ $\bar G_n(\xi)\to h\circ \pi(\xi)$. Moreover, $\bar G_n(\xi)\le ||h||$ for every $n\in\N_0$ due to integration with respect to the probability measure. Therefore, from Lebesgue's theorem we have
\begin{align*}
\lim_{n\to\infty} \int\limits_{[j]} \bar G_n \dd\ppp = \int_{[j]}\limits \lim_{n\to\infty} \bar G_n \dd\ppp = \int\limits_{[j]} h\circ \pi \dd\ppp.
\end{align*}
Hence there exists $n_2\in\mathbb{N}$ such that 
\begin{equation}
\label{integral}
\left|\int\limits_{[j]} \bar G_n \dd\ppp-\int\limits_{[j]} h\circ \pi \dd\ppp\right|<\frac{\varepsilon}{5}   
\end{equation}
for every $n\ge n_2$. The function $\bar G_n(\xi)$ depends only on $\xi_1,\ldots,\xi_n$, whereas $\ppp([j,\xi_1,\ldots,\xi_n])>0$ only for finitely many cases. Therefore, the function $\bar{G}_n$ can be viewed as a simple function and we can write 
\newcommand{\ssumy}{\sum_{\xi_n=0}^{M-1}\sum_{\xi_{n-1}=0}^\infty\ldots\sum_{\xi_1=0}^\infty}
\newcommand{\sssumy}{\sum_{\xi_n=M}^\infty\sum_{\xi_{n-1}=0}^\infty\ldots\sum_{\xi_1=0}^\infty}
\newcommand{\ssssumy}{\sum_{\xi_{n-1}=0}^\infty\ldots\sum_{\xi_1=0}^\infty}
\begin{align*}
\int\limits_{[j]} \bar{G}_n \dd\ppp &= \ssumy\ppp([j,\xi_1,\ldots,\xi_n]) \int\limits_{I} h \dd(\fcie)\odot\bar\nu_{\xi_n} + \\
&+ \sssumy\ppp([j,\xi_1,\ldots,\xi_n]) \int\limits_{I} h \dd(\fcie)\odot\bar\nu_{M-1}.
\end{align*}
Further for the second term, we can write 
\begin{align*}
& \sssumy  \ppp([j,\xi_1,\ldots,\xi_n])\int\limits_{I} h\dd(\fcie)\odot\bar\nu_{M-1}\stackrel{(a)}{\le}\\
&\stackrel{(a)}{\le}
||h|| \sssumy \ppp([j,\xi_1,\ldots,\xi_n]) =\\
&= 
||h|| \sssumy m_{\xi_n}\pravd =\\
&= 
||h||\sum_{\xi_n=M}^\infty m_{\xi_n}\sum_{\xi_{n-1}=0}^\infty
\ldots\sum_{\xi_1=0}^\infty \pravd\stackrel{(b)}{\le} \\
&\stackrel{(b)}{\le} ||h||\sum_{\xi_n=M}^\infty m_{\xi_n} < ||h||\frac{\varepsilon}{5||h||}=\frac{\varepsilon}{5},
\end{align*}
where in the inequality $(a)$ we used that we integrate a bounded function with respect to a probability measure and in the inequality $(b)$ we used that the sum 
\begin{align*}
\sum_{\xi_{n-1}=0}^\infty\ldots\sum_{\xi_1=0}^\infty\pravd
\end{align*}
is just an $n\text{-step}$ transition probability from  $\xi_n$ to the state $j$, therefore it can be bounded~\text{by $1$}. Putting this together with~(\ref{integral}), we have for $n\ge n_2$
\footnotesize
\begin{align}
\label{druhyEps}
\begin{split}
\left|\ssumy\ppp([j,\xi_1,\ldots,\xi_{n}])\cdot\int\limits_{I} h\dd(\fcie)\odot\bar\nu_{\xi_n}-\int\limits_{[j]} h\circ \pi\dd\ppp\right|<\frac{2\varepsilon}{5}.
\end{split}
\end{align}
\normalsize
Next, we can write
\footnotesize
\begin{align*}
&\ssumy\ppp([j,\xi_1,\ldots,\xi_{n}])\cdot\int\limits_{I} h\dd(\fcie)\odot\bar\nu_{\xi_n} = \\
&=
\ssumy m_{\xi_n}\pravd \cdot\int\limits_{I} h\dd(\fcie)\odot\bar\nu_{\xi_n}=\\
&=
\sum_{\xi_n=0}^{M-1}(m_{\xi_n}-\nu_{\xi_{n}}(I)+\nu_{\xi_n}(I))\ssssumy \pravd\cdot\int\limits_{I} h\dd(\fcie)\odot\bar\nu_{\xi_n}=\\
&=  
\sum_{\xi_n=0}^{M-1}(m_{\xi_n}-\nu_{\xi_{n}}(I))\ssssumy \pravd\cdot\int\limits_{I} h\dd(\fcie)\odot\bar\nu_{\xi_n} + \\
&+
\sum_{\xi_n=0}^{M-1}\nu_{\xi_n}(I)\ssssumy \pravd\cdot\int\limits_{I} h\dd(\fcie)\odot\bar\nu_{\xi_n}, 
\end{align*}
\normalsize
where for the first sum we have
\footnotesize
\begin{align*}
&\left|
\sum_{\xi_n=0}^{M-1}(m_{\xi_n}-\nu_{\xi_{n}}(I))\ssssumy \pravd\cdot\int\limits_{I} h\dd(\fcie)\odot\bar\nu_{\xi_n}\right| \le \\
&\le
\sum_{\xi_n=0}^{M-1}|m_{\xi_n}-\nu_{\xi_n}(I)|\ssssumy\pravd\cdot\int\limits_{I} ||h|| \dd(\fcie)\odot\bar\nu_{\xi_n}\stackrel{(a)}{\le} \\
&\stackrel{(a)}{\le} ||h||\sum_{\xi_n=0}^{M-1}\left|m_{\xi_{n}}-\nu_{\xi_n}(I)\right|\stackrel{(b)}{\le}  ||h||\frac{\varepsilon}{5||h||} = \frac{\varepsilon}{5}.
\end{align*}
\normalsize
In the inequality $(a)$ we used the same reasoning as above (integrating bounded function and the fact that the $n\text{-step}$ transition probability cannot be greater than $1$) and in the inequality $(b)$ we used~(\ref{stac}). Adding this to the inequality~(\ref{druhyEps}) we obtain for $n\ge n_2$
\footnotesize
\begin{align*}
%\label{tretiEps}
\left|
\sum_{\xi_n=0}^{M-1}\nu_{\xi_n}(I)\ssssumy \pravd\int\limits_{I} h\dd(\fcie)\odot\bar\nu_{\xi_n}-\int\limits_{[j]} h\circ \pi\dd\ppp\right|<\frac{3\varepsilon}{5}.
\end{align*}
\normalsize
Finally, using the definition of the measure $\bar\nu_{\xi_n}$ we have
\begin{align*}
&\sum_{\xi_n=0}^{M-1}\nu_{\xi_n}(I)\ssssumy \pravd\int\limits_{I} h\dd(\fcie)\odot\bar\nu_{\xi_n} =\\
&=
\ssumy\pravd\int\limits_{I} h \dd(\fcie)\odot \nu_{\xi_n}, 
\end{align*}
\normalsize
which differs from 
\begin{align*}
\int\limits_I h \dd(T^n\hat\nu)_j=\sumy\pravd  \int\limits_{I} h\dd(\fcie)\odot\nu_{\xi_n}
\end{align*}
only by a term
\begin{align*}
\sum_{\xi_n=M}^\infty \ssssumy \pravd \int\limits_{I} h\dd(\fcie)\odot\nu_{\xi_n}. 
\end{align*}
This can be bounded as
\begin{align*}
& \left|\sum_{\xi_n=M}^\infty \ssssumy \pravd \int\limits_{I} h\dd(\fcie)\odot\nu_{\xi_n}\right|\le\\
&\le 
\sum_{\xi_n=M}^\infty\ssssumy \pravd \int\limits_{I} ||h|| \dd(\fcie)\odot\nu_{\xi_n}=\\
&=||h||\sum_{\xi_n=M}^\infty \nu_{\xi_n}(I)\ssssumy\pravd\le\\
&\le ||h||\sum_{\xi_n=M}^\infty\nu_{\xi_n}(I)< ||h||\frac{2\varepsilon}{5||h||} = \frac{2\varepsilon}{5},
\end{align*}
thus 
\begin{align*}
\left| \int\limits_I h\dd(T^n\hat\nu)_j-\int\limits_{[j]} h\circ \pi \dd\ppp\right| <\varepsilon
\end{align*}
for every $n\ge n_2$. Therefore for any $n\ge n_1+n_2$ we have 
\begin{align*}
\left| \int\limits_I h\dd(T^n\hat\mu)_j-\int\limits_{[j]} h\circ \pi \dd\ppp\right| <\varepsilon
\end{align*}
which concludes the proof.
\end{proof}

\begin{proof}[Proof of Lemma~\ref{L3}]\phantom{\qedhere}\hspace{-0.85cm}
Let $\varepsilon>0$ be arbitrary. Choose $M$ such that
\begin{align*}
\sum_{j=M}^\infty m_j<\frac{\varepsilon}{2||\hat{h}||}
\end{align*}
where $m=\left(m_j\right)_{j=0}^{\infty}$ is the stationary distribution of the Markov chain $\left\{F_n\right\}_{n=0}^{\infty}$ and $||\hat{h}||\equiv \sup\limits_{x\in\hat{M}}\hat{h}(x)$. From Corollary~\ref{C1} we have  
\begin{align*}
\lim_{n\to\infty}\sum_{j=0}^\infty|(T^n\hat{\mu})_j(I) -m_j|=0,
\end{align*}
hence there is a nonnegative integer $N$ such that for any $n\ge N$
\begin{align*}
\sum_{j=0}^\infty \left| \tn(I)-m_j\right|<\frac{\varepsilon}{2||\hat{h}||}.
\end{align*}
Finally, we get that
\begin{align*}
\sum_{j=M}^\infty \int\limits_I h_j \dd\tn &\le\sum_{j=M}^\infty ||\hat{h}||\tn(I) \le ||\hat{h}||\sum_{j=M}^\infty \left|\tn(I) -  m_j+m_j\right|\leq\\
&\leq ||\hat{h}||\sum_{j=M}^\infty \left|\tn(I)- m_j\right|+||\hat{h}||\sum_{j=M}^\infty m_j<\varepsilon.
\end{align*}
Similarly
\begin{align*}
\left|\sum_{j=M}^\infty \int\limits_{[j]} h_j\circ \pi \dd \ppp\right|\le \left|\sum_{j=M}^\infty \int\limits_{[j]} ||\hat{h}|| \dd \ppp\right|\le ||\hat{h}||\sum_{j=M}^\infty m_j <\varepsilon. \tag*{\qed}
\end{align*}
\end{proof}

\section{Proof of Theorem 2}

Recall that if we consider Markov chain $\left\{F_n\right\}_{n=0}^\infty$ on the space $\left(\Sigma,\F,\ppp\right)$, where $F_n(\xi)=\xi_n$, then the stationary distribution of this chain is $m=(m_i)_{i=0}^\infty$, where $m_i=\frac{1}{1+E}\sum_{j=i}^\infty p_i$. Thus, according to the law of large numbers in Markov chains (see e.g. \cite{BM} Theorem 10.1 on page 185) if $\int_{\nn} |h|\dd m <\infty$ for some function $h: \nn\to\R$, then $\ppp\text{-almost}$ surely 
\begin{align}
\label{silny zakon}
\lim\limits_{n\to\infty}\frac{1}{n}\sum_{k=0}^{n-1}h(F_k) = \int\limits_{\nn} h\dd m. 
\end{align}
If we take the function $h(i)=\log L_i$, then on the right side of the equation (\ref{silny zakon}) we get 
\begin{align*}
\int\limits_{\nn} \log F_i\dd m = \sum_{i=0}^\infty (\log F_i)\cdot m_i = \sum\limits_{i=0}^\infty (\log F_i)\cdot \ppp(F_0=i) = E_{\ppp}(\log L_{F_0})<0. 
\end{align*}
It follows that for the set 
\begin{align*}
S^+\equiv\left\{
\xi\in\Sigma: \lim_{n\to\infty} \frac{1}{n} \left(\log L_{\xi_0} + \log L_{\xi_1}+\ldots + \log L_{\xi_{n-1}}\right) = E_{\ppp}(\log L_{F_0})
\right\}
\end{align*}
we have $\ppp(S^+)=1$, hence to prove the theorem it is sufficient to show that $S^+\subseteq~S$. Take any $\xi\in S^+$ and denote $L_{f_{\xi_0}\circ \ldots \circ f_{\xi_{n-1}}}$ the Lipschitz constant of the function $f_{\xi_0}\circ~\ldots\circ f_{\xi_{n-1}}$. The definition of the Lipschitz function implies that
\begin{align*}
\frac{\di(I_\xi^n)}{\di(I)} \le L_{f_{\xi_0}\circ \ldots \circ f_{\xi_{n-1}}} \le L_{\xi_0}\cdot \ldots \cdot L_{\xi_{n-1}}, 
\end{align*}
therefore 
\begin{align*}
&\limsup_{n\to\infty}\frac{1}{n}\log \di(I_\xi^n)\le 
\limsup_{n\to\infty}\frac{1}{n}\log \left(L_{\xi_0}\cdot \ldots \cdot L_{\xi_{n-1}}\cdot \di(I)\right) =\\
&=
\limsup_{n\to\infty}\frac{1}{n}\log \di(I) + \limsup_{n\to\infty}\frac{1}{n}\left(\log L_{\xi_0} +\ldots +\log L_{\xi_{n-1}}\right) = E_{\ppp}(\log L_{F_0}) < 0
\end{align*}
(the first term is clearly $0$ and the second term follows from the definition of $S^+$). Thus, $\lim\limits_{n\to\infty} \di(I_\xi^n)=0$ (if this limit were positive, then $\frac{1}{n}\log \di(I_{\xi}^n)\to 0$ for $n\to\infty$ which is not true). Note that this limit exists, because the sequence $\{\di(I_\xi^n)\}_{n=1}^\infty$ is nonincreasing and bounded. Since $I_\xi\subseteq I_{\xi}^n$ for every $n\in\N$, we obtain 
\begin{align*}
\di(I_\xi)\le \lim_{n\to\infty} \di(I_{\xi}^n) = 0, 
\end{align*}
hence $\xi\in S$ and $S^+\subseteq S$, which concludes the proof. \hfill \qedsymbol{}

\section{Proof of Theorem 3}
Similarly to the proof of Theorem 1, we will follow the ideas of the proof in~\cite{DiazMatias} with some adjustments needed due to different assumptions.

For any $x\in I$ and $n\in\N$ consider the sets
\begin{align*}
S_n^x&\equiv \{\xi\in\Sigma: x\in I_\xi^n\},\\
\quad S^x&\equiv \{\xi\in\Sigma: x\in I_\xi\}.
\end{align*}
Since $I_\xi^{n+1}\subseteq I_{\xi}$ and $I_\xi=\bigcap\limits_{n=1}^\infty I_\xi^n$, it clearly follows that
\begin{itemize}
\item $S_{n+1}^x\subseteq S_n^x$,
\item $S^x=\bigcap\limits_{n=1}^\infty S_n^x$, hence $\ppp(S_n^x)\to \ppp(S^x)$ for $n\to\infty$. 
\end{itemize}
If $\xi\notin S$, then $I_\xi$ is a closed interval, hence it contains some rational number. Therefore there exists  $q\in \Q\cap I$ such that $\xi\in S^q$. Consequently,
\begin{align*}
\Sigma\setminus S \subseteq \bigcup_{q\in \Q\cap I} S^q
\end{align*}
and 
\begin{align*}
1-\ppp(S)\le \sum_{q\in \Q\cap I} \ppp(S^q). 
\end{align*}
To prove the theorem it is sufficient to show that $\ppp(S^x)=0$ for any $x\in I$ or alternatively  $\ppp(S_n^x)\to~0$ for $n\to\infty$. In addition, the sequence $\{\ppp(S_n^n)\}_{n=1}^\infty$ is nonincreasing, hence it suffices to prove that $\ppp(S_{nN}^x)\to 0$ for $n\to\infty$ for some positive integer $N$. We show this using the following two lemmas. 
\begin{lemma}\label{splittingInequality}
Denote $\sigma^N$ the $N\text{-th}$ iterate of the shift map, i.e. for $\omega=(\omega_0,\omega_1,\ldots)\in~\Sigma$, $\sigma^N(\omega)=(\omega_N,\omega_{N+1},\ldots)$.
If the splitting condition is satisfied, then there exists a positive integer $N$ and a $\ppp$-admissible cylinder $W\equiv [\xi_0,\ldots, \xi_{N-1}]$ such that for the set 
\begin{align*}
\Sigma_n^W \equiv \{\omega\in \Sigma: \sigma^{iN}(\omega)\notin W \text{ for } i=0,1,\ldots,n-1\}
\end{align*}
we have 
\begin{align*}
\ppp\left(S_{nN}^x\right)\le \ppp\left(\Sigma_n^W\right)
\end{align*}
for any positive integer $n$ and any $x\in I$. 
\end{lemma}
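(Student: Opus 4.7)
Plan. The argument has three stages: translating the splitting hypothesis to the reversed chain, extending the two finite words to a common length and common final state, and running an induction on $n$.

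Step 1: translation and extension. The $\pp$-admissibility of $(a_1,\ldots,a_l)$ is equivalent, by reversal, to the $\ppp$-admissibility of $\alpha := (a_l,\ldots,a_1)$, since $\ppp([\alpha]) = \pp([a_1,\ldots,a_l])$; set $\beta := (b_r,\ldots,b_1)$ similarly. The hypothesis reads $I_\alpha^l\cap I_\beta^r = \emptyset$ with common first letter $c := a_l = b_r$. The reversed transition matrix $Q$ only admits $i\to 0$ (when $p_i>0$) and $i\to i+1$, and by the unboundedness of $\{j:p_j>0\}$ every state can be driven to $0$ in finitely many admissible steps ($i\to i+1\to\cdots\to i+k\to 0$ with $p_{i+k}>0$); since $q_{00}=p_0>0$, one can then stay at $0$. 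This lets us extend $\alpha$ and $\beta$ on the right to $\ppp$-admissible words $\tilde\alpha,\tilde\beta$ of a common length $N$, both ending in state $0$. Extensions shrink images, so $I_{\tilde\alpha}^N\cap I_{\tilde\beta}^N = \emptyset$ is preserved.

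Step 2: choice of $W$. Without loss of generality $\ppp([\tilde\alpha])\le\ppp([\tilde\beta])$; set $W := [\tilde\alpha]$. As both sequences start with $c$, the ratio $\ppp_\mu([\tilde\alpha])/\ppp_\mu([\tilde\beta])$ is independent of the initial distribution $\mu$ on $\nn$, so $\ppp_\mu([\tilde\alpha])\le\ppp_\mu([\tilde\beta])$ for every $\mu$, where $\ppp_\mu$ denotes the $\ppp$-chain with starting distribution $\mu$.

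Step 3: induction on $n$. I plan to prove the strengthened claim $\ppp_\mu(S_{nN}^x) \le \ppp_\mu(\Sigma_n^W)$ for every initial distribution $\mu$ on $\nn$, every $x\in I$, and every $n\ge1$ (the lemma is the case $\mu=m$). For $n=1$, $\ppp_\mu(S_N^x) = \sum_{b:\,x\in I_b^N}\ppp_\mu([b])$; if $x\notin I_{\tilde\alpha}^N$ the sum excludes $\tilde\alpha$ and is $\le\ppp_\mu(\Sigma_1^W)$, while if $x\in I_{\tilde\alpha}^N$ disjointness forces $x\notin I_{\tilde\beta}^N$, so the sum excludes $\tilde\beta$ and is $\le 1-\ppp_\mu([\tilde\beta])\le 1-\ppp_\mu([\tilde\alpha])=\ppp_\mu(\Sigma_1^W)$. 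For the inductive step, the monotone-injection hypothesis of Theorem \ref{thm3} makes $g_{b_0}$ injective on $I$ for any first-block word $b_0$, and the Markov property gives
\[\ppp_\mu(S_{nN}^x) = \sum_{b_0:\,x\in I_{b_0}^N}\ppp_\mu([b_0])\,\ppp_{\mu'(b_0)}\bigl(S_{(n-1)N}^{g_{b_0}^{-1}(x)}\bigr),\]
with $\mu'(b_0) := q_{(b_0)_{N-1},\,\cdot}$, while $\ppp_\mu(\Sigma_n^W) = \sum_{b_0\ne\tilde\alpha}\ppp_\mu([b_0])\,\ppp_{\mu'(b_0)}(\Sigma_{n-1}^W)$. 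Applying the inductive hypothesis to each summand on the left, the case $x\notin I_{\tilde\alpha}^N$ is immediate by set inclusion of indices; in the case $x\in I_{\tilde\alpha}^N$ the left-hand sum excludes $\tilde\beta$, and the critical cancellation $\mu'(\tilde\alpha) = \mu'(\tilde\beta) = q_{0,\cdot}$ (both sequences end at $0$) reduces the difference $\ppp_\mu(\Sigma_n^W)-\ppp_\mu(S_{nN}^x)$ to $(\ppp_\mu([\tilde\beta]) - \ppp_\mu([\tilde\alpha]))\,\ppp_{q_{0,\cdot}}(\Sigma_{n-1}^W)\ge 0$.

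The main obstacle is engineering the common tail state in Step 1: the cancellation $\mu'(\tilde\alpha)=\mu'(\tilde\beta)$ driving the inductive step requires $\tilde\alpha$ and $\tilde\beta$ to end in the same letter, and the countable-state structure of the $\ppp$-chain must be used to produce such simultaneous extensions. Everything else reduces to first-block conditioning and a straightforward case analysis.
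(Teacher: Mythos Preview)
Your argument is correct, and the construction in Step~1 is essentially the paper's own construction (the paper extends in the $\pp$-world and then reverses, you reverse first and extend in the $\ppp$-world; the two are dual). The point you correctly identify as the main obstacle---forcing a common terminal state so that the conditional future after $\tilde\alpha$ matches that after $\tilde\beta$---is exactly what the paper's condition~(ii), $\xi_{N-1}=\eta_{N-1}$, is designed to secure.

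Where you genuinely diverge from the paper is in Step~3. The paper does not argue by induction. Instead it sets up, for each $n$, a combinatorial injection $H_n$ from the cylinders comprising $S_{nN}^x$ into those comprising $\Sigma_n^W$: every occurrence of the block $\tilde\alpha$ among the $n$ consecutive $N$-blocks is replaced by $\tilde\beta$. Injectivity of $H_n$ uses injectivity of the maps $f_i$ (so distinct preimages under $H_n$ would force $x$ to lie in both $I_{\tilde\alpha}^N$-type and $I_{\tilde\beta}^N$-type images, contradicting disjointness), and the matching first and last letters guarantee $\ppp(C)\le\ppp(H_n(C))$ term by term. This gives the inequality in one stroke, without needing to strengthen the claim to arbitrary initial distributions $\mu$. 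Your inductive route is equally valid and arguably more probabilistic in flavor: you trade the global block-replacement map for a one-step Markov conditioning, at the cost of carrying the quantifier over $\mu$ through the induction. The crucial ingredients---common first letter (making the ratio $\ppp_\mu([\tilde\alpha])/\ppp_\mu([\tilde\beta])$ independent of $\mu$), common last letter (making $\mu'(\tilde\alpha)=\mu'(\tilde\beta)$), disjointness of images, and injectivity of the composed maps---are identical in both proofs.
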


\begin{lemma}\label{ergodicThm}
For any positive integer $N$ and any $\ppp\text{-admissible}$ cylinder $W=[\xi_0,\ldots,\xi_{N-1}]$
\begin{align*}
\lim\limits_{n\to\infty} \ppp\left(\Sigma_n^W\right) = 0.
\end{align*}
\end{lemma}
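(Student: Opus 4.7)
My plan is to prove Lemma~\ref{ergodicThm} via ergodicity of the shift $\sigma^N$ on $(\Sigma,\F,\ppp)$. First, observe that the sets $\Sigma_n^W$ are nested: adding the further constraint $\sigma^{nN}(\omega)\notin W$ can only shrink the set, so $\Sigma_{n+1}^W\subseteq\Sigma_n^W$. By continuity of $\ppp$ from above,
\[
\lim_{n\to\infty}\ppp(\Sigma_n^W)=\ppp\!\left(\bigcap_{n=1}^\infty \Sigma_n^W\right)=\ppp(\Sigma_\infty^W),
\]
where $\Sigma_\infty^W=\{\omega\in\Sigma:\sigma^{iN}(\omega)\notin W\text{ for every }i\ge 0\}$, so it suffices to show $\ppp(\Sigma_\infty^W)=0$.

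Next I would use that under $\ppp$ the coordinate process $F_n(\omega)=\omega_n$ is a stationary Markov chain with transition matrix $Q$ and invariant distribution $m$. This chain is irreducible (from any $i$ one reaches arbitrarily large states via $q_{i,i+1}=m_{i+1}/m_i>0$; infinitely many $j$ satisfy $q_{j0}>0$ because by assumption infinitely many $p_j$ are positive; from $0$ one then reaches any state via the transitions $q_{k,k+1}>0$) and aperiodic ($q_{00}=p_0>0$). Consequently the shift $\sigma$ on $(\Sigma,\F,\ppp)$, and therefore also $\sigma^N$, is $\ppp$-preserving and ergodic; this is the classical ergodic theorem for stationary, irreducible and aperiodic Markov chains on a countable state space.

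Since $W$ is $\ppp$-admissible we have $\ppp(W)>0$. Applying the Birkhoff ergodic theorem to the indicator $\mathbf{1}_W$ and the transformation $\sigma^N$ yields
\[
\frac{1}{n}\sum_{i=0}^{n-1}\mathbf{1}_W\bigl(\sigma^{iN}(\omega)\bigr)\;\xrightarrow[n\to\infty]{}\;\ppp(W)>0\qquad \ppp\text{-a.s.},
\]
so for $\ppp$-almost every $\omega$ there exist (in fact infinitely many) indices $i$ with $\sigma^{iN}(\omega)\in W$. This gives $\ppp(\Sigma_\infty^W)=0$ and completes the argument.

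The step that I expect to require the most care is the rigorous justification of ergodicity of $\sigma^N$ in this specific countable-state setting with the very structured transition matrix $Q$. If a direct appeal to the Markov-chain ergodic theorem is deemed too heavy, the same conclusion can be reached more elementarily: positive recurrence together with $p_0>0$ gives $\ppp(F_{iN}=\xi_0)\to m_{\xi_0}>0$, whence $\sum_i \ppp(F_{iN}=\xi_0)=\infty$, and a conditional Borel-Cantelli argument based on the strong Markov property at successive visits of the chain to state $\xi_0$ along the subsequence $\{iN\}$ shows that such visits occur infinitely often $\ppp$-a.s.; conditional on each such visit, the subsequent block $(\omega_{iN+1},\ldots,\omega_{iN+N-1})$ equals $(\xi_1,\ldots,\xi_{N-1})$ with a fixed positive probability $q_{\xi_0\xi_1}\cdots q_{\xi_{N-2}\xi_{N-1}}$, so at least one such trial succeeds almost surely, again yielding $\ppp(\Sigma_\infty^W)=0$.
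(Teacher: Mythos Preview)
Your proof is correct, and your alternative sketch at the end is essentially the paper's own argument. The paper does not invoke Birkhoff; instead it reduces to $\ppp(\Sigma^W)=0$ exactly as you do, then defines successive stopping times $\tau_1,\tau_2,\ldots$ along the subsequence $\{0,N,2N,\ldots\}$ at which $F_{\cdot}=\xi_0$, observes that these are $\ppp$-a.s.\ finite because the $N$-step chain $\tilde F_n=F_{nN}$ is positive recurrent (same $m$, transition matrix $Q^N$), and then uses the strong Markov property to treat the events ``the block following $\tau_k$ equals $(\xi_1,\ldots,\xi_{N-1})$'' as independent coin flips with a fixed positive success probability, forcing $\ppp(\Sigma^W)<(1-c)^{n^\star}<\varepsilon$.

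Your main route via the ergodic theorem is slicker: once ergodicity (in fact mixing) of $\sigma$ under the stationary $\ppp$ is granted, ergodicity of $\sigma^N$ follows and Birkhoff on $\mathbf{1}_W$ finishes in one line, even giving the stronger conclusion that $\sigma^{iN}(\omega)\in W$ for a positive density of $i$. The paper's stopping-time argument, by contrast, is more self-contained and avoids citing the path-space ergodic theorem for Markov chains; it only needs positive recurrence of the $N$-step chain and the strong Markov property. The one point you flag---ergodicity of $\sigma^N$ rather than just $\sigma$---is indeed the only place needing a word of justification; the cleanest fix is to note that irreducible aperiodic positive recurrent chains are mixing, and mixing passes to powers of the shift.
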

Using these lemmas, we obtain that for any $x\in I$
\begin{align*}
\ppp(S^x) =\lim_{n\to\infty} \ppp\left(S_{nN}^x\right)\le \lim_{n\to\infty} \ppp\left(\Sigma_n^W\right)= 0
\end{align*}
what had to be proven. 

Now we prove the above lemmas.
\begin{proof}[Proof of Lemma~\ref{splittingInequality}]
The proof will be in two parts. In the first one, we will show that if the splitting condition is satisfied, then there are $\ppp\text{-admissible}$ cylinders $[\xi_0,\ldots,\xi_{N-1}]$ and $[\eta_0,\ldots,\eta_{N-1}]$ satisfying
\begin{itemize}
\item [(i)] $\xi_0=\eta_0$,
\item [(ii)] $\xi_{N-1}=\eta_{N-1}$,
\item [(iii)] $\ppp([\xi_0,\ldots,\xi_{N-1}])\le \ppp([\eta_0,\ldots,\eta_{N-1}])$,
\item [(iv)] $f_{\xi_0}\circ\ldots\circ f_{\xi_{N-1}}(I)\cap f_{\eta_0}\circ\ldots\circ f_{\eta_{N-1}}(I)=\emptyset$.
\end{itemize}
This result is the same as Claim 4.4 from \cite{DiazMatias}, but in our case there are slightly different assumptions. After showing this in the second part of the proof we can directly apply Proposition 3.1 from \cite{DiazMatias} which holds in our situation as well - for completeness, we formulate and prove it in the Appendix as Lemma~\ref{DMprop31}. It states that if $W=[\xi_0,\ldots,\xi_{N-1}]$ and $[\eta_0,\ldots,\eta_{N-1}]$ are $\ppp$-admissible cylinders satisfying (i)-(iv), then $\ppp(S_{nN}^x)\le \ppp(\Sigma_n^W)$ for any positive integer $n$ and any $x\in I$, which is exactly what we need to prove. 

In the first part of the proof we introduce two notations - for two cylinders $C=[c_1,\ldots,c_{n_1}]$ and $D=[d_1,\ldots,d_{n_2}]$ let $C\star D$ denote the cylinder 
\begin{align*}
C\star D = [c_1,\ldots,c_{n_1},d_1,\ldots,d_{n_2}]
\end{align*}
and let $f_{\circ C}$ denote the composition
\begin{align*}
f_{\circ C}=f_{c_{n_1}}\circ \ldots\circ f_{c_1}.
\end{align*}
Since the splitting condition is satisfied there are $\pp\text{-admissible}$ cylinders $A\equiv[a_1,\ldots,a_l]$ and $B\equiv[b_1,\ldots,b_r]$, where $a_l=b_r$ and 
\begin{align}
\label{splitting1}
f_{\circ A}\left(I\right)\cap f_{\circ B}(I)=\emptyset.
\end{align}
Without loss of generality let $a_1\leq b_1$. Recall  the assumption that $p_0\in\left(0,1\right)$ and the set $\left\{j\in\mathbb{N}_0: p_j>0\right\}$ is not bounded. Let $\varphi=\min\left\{n\in\mathbb{N}_0:p_{b_1+n}>0\right\}$. Then the cylinder 
\begin{align}
\label{cylinderb}
B_1\star B\equiv[0,b_1+\varphi,\ldots,b_1+1]\star [b_1,\ldots,b_r]=[0,b_1+\varphi,\ldots,b_1+1,b_1,\ldots,b_r]
\end{align}
is $\pp\text{-admissible}$ since $b_1+i$ is accessible from $b_1+(i+1)$ for any $i\in\mathbb{N}_0$ and $b_1+\varphi$ is accessible from $0$ because $p_{b_1+\varphi}>0$.

Since $a_1\leq b_1$ there is $k\in\mathbb{N}_0$ such that $a_1+k=b_1$. Then the cylinder  
\begin{align}
\label{cylindera}
\begin{split}
B_1\star A_1\star A&\equiv[0,b_{1}+\varphi,\ldots,b_{1}+1]\star[a_{1}+k,\ldots, a_{1}+2, a_{1}+1]\star[a_1,\ldots,a_l]=\\
&=[0,b_{1}+\varphi,\ldots,b_{1}+1,a_{1}+k,\ldots, a_{1}+2, a_{1}+1,a_1,\ldots,a_l]
\end{split}
\end{align}
is $\pp\text{-admissible}$ because  $a_1+i$ can be reached from $a_1+(i+1)$, $b_1+i$ can be reached from $b_1+(i+1)$ for any $i\in\mathbb{N}_0$. Further, $b_1=a_1+k$, hence $a_1+k$ is accessible from $b_1+1$ and $b_1+\varphi$ is accessible from $0$  because $p_{b_1+\varphi}>0$.

The length of the cylinder (\ref{cylinderb}) is $r+\varphi+1$ and the length of the cylinder (\ref{cylindera}) is $l+k+\varphi+1$. Firstly, we assume that the length of the cylinder (\ref{cylinderb}) $r+\varphi+1\leq l+k+\varphi+1$ where $l+k+\varphi+1$ is the length of the cylinder (\ref{cylindera}), that is $r\leq l+k$. Hence the cylinder (\ref{cylinderb}) is shorter by $l+k-r$ places. Adding $l+k-r$  zeroes to (\ref{cylinderb}) we get the cylinder 
\begin{align}
\label{cylinderc}
\begin{split}
B_0\star B_1\star B&\equiv [0,\dots,0]\star [0,b_1+\varphi,\ldots,b_1+1]\star [b_1,\ldots,b_r]=\\
&=[0,\ldots,0,b_1+\varphi,\ldots,b_1+1,b_1,\ldots,b_r].
\end{split}
\end{align}
If the cylinder (\ref{cylinderb}) is longer than the cylinder (\ref{cylindera}), then we add $r-l-k$ zeroes to (\ref{cylindera}). The remaining steps will proceed analogously.

Now we obtain that the lengths of the cylinders (\ref{cylinderc}) and (\ref{cylindera}) are the same. From~(\ref{splitting1}) we can see that for  $x,y\in I$ we have  $f_{\circ A}(x)\neq f_{\circ B}(y)$. Consequently,
\begin{align}
\label{splitting2}
f_{\circ A}\circ f_{\circ (B_1\star A_1)}\left(I\right)\cap f_{\circ B}\circ f_{\circ (B_0\star B_1)}\left(I\right)=\emptyset.
\end{align}
Since the cylinder (\ref{cylindera}) is $\pp\text{-admissible}$, we obtain
that
\begin{align*}
[\xi_0,\ldots,\xi_{N-1}]\equiv \left[a_l, \ldots, a_1,a_1+1,a_1+2,\ldots,a_1+k,b_{1}+1,\ldots,b_{1}+\varphi,0\right]
\end{align*}
is $\ppp\text{-admissible}$ and similarly the cylinder
\begin{align*}
\left[\eta_0,\ldots,\eta_{N-1}\right]\equiv \left[b_r,\ldots,b_1,b_1+1,\ldots,b_1+\varphi,0,0,\ldots,0\right],
\end{align*}
is $\ppp\text{-admissible}$. Moreover, $\xi_0=a_l=b_r=\eta_0$ and $\xi_{N-1}=0=\eta_{N-1}$, therefore the conditions (i) and (ii) are satisfied. Next,  the equation~(\ref{splitting2}) implies that $[\xi_0,\ldots,\xi_{N-1}]$ and  $[\eta_0,\ldots,\eta_{N-1}]$ satisfy the condition (iv). Finally, if they do not satisfy the third condition it is sufficient to interchange $\xi$ and $\eta$.

Applying Lemma~\ref{DMprop31} from Appendix, the proof of Lemma~\ref{splittingInequality} is complete.
\end{proof}

\begin{proof}[Proof of Lemma~\ref{ergodicThm}]
Since $\Sigma_{n+1}^W\subseteq \Sigma_n^W$, for the set
\begin{align*}
\Sigma^W\equiv \bigcap_{n=1}^\infty \Sigma_n^W
\end{align*}
we have $\ppp(\Sigma^W)=\lim\limits_{n\to\infty} \ppp(\Sigma_n^W)$. Thus it is sufficient to show that $\ppp(\Sigma^W)=0$. This can be done by showing that $\ppp(\Sigma^W)<\varepsilon$ for any $\varepsilon>0$. 

Since $W$ is a $\ppp\text{-admissible}$ cylinder, for given $\varepsilon>0$ we can take an integer $n^\star$ such that $(1-\ppp(W))^{n^\star}<\varepsilon$. Next, consider the stopping times 
\begin{align*}
\tau_1(\omega)&\equiv \min \{n\in\{0,N,2N,3N,\ldots\}: F_n(\omega)=\xi_0\},\\
\tau_k(\omega)&\equiv \min \{
n\in\{0,N,2N,3N,\ldots\}, n>\tau_{k-1}: F_n(\omega) = \xi_0
\} 
\end{align*}
for $k=2,3,\ldots, n^\star$ and the set
\begin{align*}
\Sigma^+\equiv \{
\omega\in \Sigma: \tau_1(\omega)<\infty, \ldots, \tau_{n^\star}(\omega)<\infty
\}.
\end{align*}
For this set we have $\ppp(\Sigma^+)=1$ - this can be seen using a stochastic process $\{\tilde{F}_n\}_{n=0}^\infty$ defined on the probability space $(\Sigma,\F,\ppp)$ as $\tilde{F}_n(\omega)\equiv F_{nN}(\omega)=\omega_{nN}$ for $n\in \nn$. This is clearly a Markov chain with the state space $\nn$ (since $\{F_n\}_{n=0}^\infty$ is aperiodic), transition matrix $Q^N$ and the invariant distribution $m$. Therefore all states are positive recurrent (see e.g. \cite{BM} Theorem 8.1 on page 162), hence the chain $\{\tilde{F}_{n}\}_{n=0}^\infty$ $\ppp$ almost surely visits the state $\xi_0$ infinitely many times, thus $\tau_1,\ldots,\tau_{n^\star}$ are $\ppp$ almost surely finite. 

Finally, consider sets $A_1,\ldots, A_{n^\star}$ defined as
\begin{equation}
A_n=\{\omega\in \Sigma^+: F_{\tau_n(\omega)}(\omega)=\xi_0, F_{\tau_n(\omega)+1}(\omega)=\xi_1,\ldots, F_{\tau_n(\omega)+N-1}(\omega)=\xi_{N-1}\},
\end{equation}
$n=1,\ldots,n^\star$. Using the strong Markov property, the sets $A_1,\ldots,A_{n^\star}$ are independent and $\ppp(A_n)=\ppp(W)$ for every $n=1,\ldots,n^\star$. Moreover if $\omega\in \Sigma^W$, then $\omega$ cannot be in any of the sets $A_1,\ldots,A_{n^\star}$, hence 
\begin{equation}
\ppp(\Sigma^W)\le \ppp(A_1^C\cap \ldots\cap A_{n^\star}^C)=
(1-\ppp(W))^{n^\star}<\varepsilon, 
\end{equation}
which concludes the proof. 
\end{proof}

\section{Final remarks and examples}

In the following we will show situations where the average contraction or splitting condition can be applied. The systems with impulses will be described using properties of the  functions $f$ and $g$ generating the system and the expected value of the time between impulses. Recall that in the whole paper we take assumptions which are mentioned below the definition~(\ref{MPrechodu}). We also show that for some systems both conditions are appliable, although there are systems where the application of one property is impossible while the other one may be applied. 

In the following examples  and claims we consider Lipschitz  functions $f$ and $g$ with Lipchitz constants $L_1$ and $L_0$, respectively. We use  the relation (\ref{claim1}) to find a condition for the expected value of the time between impulses so Theorem \ref{thm2} holds and the system is stable.

%\begin{exmp}
%Let $f,g:\left<0,1\right>\to \left<0,1\right> $ be defined  by $g(x)=\frac{x}{2}+\frac{1}{2}$ and $f(x)=\begin{cases}2x & \text{ if } x \in \left<0,\frac{1}{2}\right>\\ 2-2x & \text{ if } x \in \left(\frac{1}{2},1\right> \end{cases}$.
%The Lipschitz constant $L_0$ of the function $g$ is $\frac{1}{2}$, and the Lipschitz constant $L_1$ of the function $f$ is $2$. If $2\cdot\left(\frac{1}{2}\right)^E=2^{-E+1}<1$ then the condition from (\ref{claim1}) is satisfied. It means that if we take any random distribution of times between impulses with the expected value $E$ greater than $1$ then the corresponding system with impulses satisfies Theorem \ref{thm2}. 
%\end{exmp}

It is easy to see that for any system generated by two different linear maps the contraction property is satisfied.  

\begin{claim}
If $f$ and $g$ are two different linear maps then the system with impulses satisfies the inequality (\ref{claim1}).
\end{claim}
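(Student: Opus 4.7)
The plan is to parametrize the two maps as $f(x) = ax + b$ and $g(x) = cx + d$, so that the Lipschitz constants are $L_1 = |a|$ and $L_0 = |c|$. Since both maps send the compact interval $I = [\alpha,\beta]$ into itself, the image of $I$ under each is an interval of length $|a|(\beta-\alpha)$ or $|c|(\beta-\alpha)$, and this is bounded above by $\beta-\alpha$, which immediately forces $|a| \le 1$ and $|c| \le 1$.

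Next I would rewrite (\ref{claim1}) in the equivalent logarithmic form $\log L_1 + E\log L_0 < 0$, a legitimate reformulation as long as $L_0 > 0$ (the degenerate case $L_0 = 0$ corresponds to $g$ being constant and can be handled directly since the system then collapses to a point after the first impulse). The running assumption that $\{j \in \nn : p_j > 0\}$ is unbounded forces some $j \ge 1$ with $p_j > 0$, so $E = \sum_j j p_j > 0$. Combined with $\log L_0 \le 0$ and $\log L_1 \le 0$, the strict inequality reduces to ruling out the simultaneous equality $L_0 = L_1 = 1$.

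The final ingredient is a short classification of affine self-maps of $I$ with slope of absolute value equal to one: a direct check shows these are exactly the identity $x \mapsto x$ and the reflection $x \mapsto \alpha + \beta - x$ about the midpoint of $I$, since any other affine map of slope $\pm 1$ shifts $I$ strictly off itself. With only two such candidates available, the hypothesis $f \ne g$ prevents $L_0 = L_1 = 1$ outside of the single configuration $\{f, g\} = \{\mathrm{id}, \mathrm{refl}\}$. I expect this exceptional pair to be the only real obstacle to the argument: there (\ref{claim1}) actually saturates with equality rather than strict inequality, so the statement should be read as implicitly excluding two distinct affine isometries of $I$, or alternatively the splitting condition of Theorem \ref{thm3} can be invoked to handle that residual corner case.
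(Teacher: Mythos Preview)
Your approach coincides with the paper's in all essentials. The paper's proof is a single line: for two different linear self-maps one has $L_0,L_1\in[0,1]$ with $L_0<1$ or $L_1<1$, and then the inequality follows for any $E>0$. You unpack exactly this, deriving $|a|,|c|\le 1$ from the self-mapping property and reducing the strict inequality to ruling out $L_0=L_1=1$.

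Where you go further is in flagging the pair $\{\mathrm{id},\mathrm{refl}\}$ as a potential exception under the reading ``linear = affine''. The paper does not address this; its assertion ``$L_0<1$ or $L_1<1$'' tacitly reads ``linear'' so that the identity is the \emph{only} self-map of $I$ with unit Lipschitz constant (e.g.\ increasing affine, or genuinely linear $x\mapsto ax$ on an interval with left endpoint $0$). Under that reading the one-line proof is complete; under yours, you are right that the pair $(\mathrm{id},\mathrm{refl})$ saturates the inequality and the claim fails as stated.

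One correction to your final remark: the splitting condition of Theorem~\ref{thm3} cannot rescue the exceptional pair. Both the identity and the reflection are bijections of $I$, so every finite composition $f_{a_l}\circ\cdots\circ f_{a_1}$ has image equal to all of $I$, and the disjointness required in~(\ref{splitting}) is impossible. Indeed that system is genuinely unstable --- orbits oscillate between $x_0$ and $\alpha+\beta-x_0$ --- so no hypothesis in the paper covers it. Your first suggested reading (implicitly excluding two distinct affine isometries) is the right way to reconcile the claim with its proof.
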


\begin{proof}
For two different linear maps we have $L_0,L_1\in \left<0,1\right>$ and $L_0<1$ or $L_1<1$, hence the inequality (\ref{claim1}) holds for any $E>0$.
\end{proof}

In the next example we find a condition on the expected value of times between impulses so that the system is stable.

\begin{example}
Let $f,g:\left<0,1\right>\to \left<0,1\right> $ be defined by $f(x)=4x(1-x)$ and $g(x)=\frac{x}{8}$. The Lipschitz constant $L_0$ of the function $g$ is $\frac{1}{8}$, and the Lipschitz constant $L_1$ of the function $f$ is $8$. It is easy to see that the condition (\ref{claim1}) is satisfied for any random distribution of impulses with the expected value $E$ greater than $1$.
\end{example}

In general, for $L_0<1$ the following result holds.

\begin{claim}
If $L_0<1$ then there exists a distribution of times between impulses such that the system with impulses satisfies the relation (\ref{claim1}).
\end{claim}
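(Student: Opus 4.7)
The plan is to reduce inequality~(\ref{claim1}) to a statement about $E$ alone and then produce an explicit distribution with the required expected value. Since $L_0 < 1$, we have $-\log L_0 > 0$, so taking logarithms in~(\ref{claim1}) is monotone and the inequality is equivalent to
\[
E > \frac{\log L_1}{-\log L_0}.
\]
Writing $E^\star = \max\bigl\{0,\log L_1/(-\log L_0)\bigr\}$, it therefore suffices to exhibit a probability distribution $(p_n)_{n=0}^\infty$ on $\mathbb{N}_0$ whose expectation is finite and strictly greater than $E^\star$, while satisfying the standing assumptions $p_0 > 0$ and $\{n : p_n > 0\}$ unbounded.

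Next I would construct such a distribution explicitly. Fix any $\alpha \in (0,1)$ and choose $q \in (0,1)$ small enough that $(1-\alpha)/q > E^\star$; then set $p_0 = \alpha$ and $p_n = (1-\alpha)\, q\, (1-q)^{n-1}$ for $n \ge 1$. A routine geometric-series computation gives $\sum_{n\ge 0} p_n = 1$ and $E = \sum_{n\ge 1} n p_n = (1-\alpha)/q$, which is finite and exceeds $E^\star$ by the choice of $q$. Clearly $p_0 = \alpha > 0$ and the support $\mathbb{N}_0$ is unbounded, so all assumptions listed below~(\ref{MPrechodu}) are met. Substituting this $E$ back into the logarithmic form of the inequality yields $L_1 < L_0^{-E}$, which is exactly~(\ref{claim1}).

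I do not expect any real obstacle here. The whole argument hinges on the fact that, since $L_0 \in (0,1)$, the map $E \mapsto L_0^{-E}$ is strictly increasing and unbounded, so enlarging the mean of the impulse-interval distribution is enough to force~(\ref{claim1}). The only modest care needed is to ensure that the constructed distribution simultaneously has $p_0 > 0$, unbounded support, and finite mean, for which the geometric-with-atom above suffices; one could equally use any other positive-mean distribution on $\mathbb{N}$ scaled by $(1-\alpha)$ and mixed with a point mass $\alpha$ at $0$.
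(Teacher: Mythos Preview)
The paper states this claim without proof, treating it as immediate from the observation that, for $L_0\in(0,1)$, the right-hand side $L_0^{-E}$ of~(\ref{claim1}) grows without bound as $E\to\infty$. Your argument is correct and supplies precisely the details the paper leaves implicit: you rewrite~(\ref{claim1}) as a lower bound on $E$ and then exhibit an explicit impulse-time distribution (a point mass at $0$ mixed with a shifted geometric law) whose finite mean exceeds that bound while meeting all three standing assumptions listed below~(\ref{MPrechodu}). The construction and the geometric-series computation are both fine. One minor remark: your reduction via logarithms tacitly assumes $L_1>0$; the degenerate case $L_1=0$ is of course trivial, since then~(\ref{claim1}) holds for every $E>0$.
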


In the following example  we give the condition for the expected value of the distribution of impulses so that (\ref{claim1}) holds in a case that $L_0>1$.

\begin{example}
Let the functions $f,g:\left<0,1\right>\to \left<0,1\right> $ be defined  as $f(x)=\frac{x}{8}$ and $g(x)=4x(1-x)$. The Lipschitz constant $L_0$ of the function $g$ is $8$, and the Lipschitz constant $L_1$ of the function $f$ is $\frac{1}{8}$. The inequality (\ref{claim1}) is satisfied   for any distribution of random times with expected value $E$ less than $1$. 
\end{example}

In general, for $L_0>1$ the following claim holds.

\begin{claim}
\label{claim4}
If $L_0>1$ then there is a distribution of times between impulses such that~(\ref{claim1}) holds if and only if $\log_{L_0}L_1<0$.                          
\end{claim}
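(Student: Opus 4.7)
The strategy is to translate the analytic condition~(\ref{claim1}) into an equivalent condition on the expected value $E$ of the time between impulses. Since $L_0>1$, the function $\log_{L_0}$ is strictly increasing, so~(\ref{claim1}) is equivalent to
\[
E < -\log_{L_0} L_1.
\]
The standing assumptions below~(\ref{MPrechodu}) require $p_0>0$, finite mean $E<\infty$, and that $\{j\in\nn : p_j>0\}$ be unbounded. The unbounded-support requirement forces the existence of some $j\ge 1$ with $p_j>0$, hence $E\ge j\,p_j>0$. Therefore a valid distribution can satisfy~(\ref{claim1}) if and only if the open interval $(0,-\log_{L_0} L_1)$ is nonempty, which happens precisely when $\log_{L_0} L_1<0$.

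For the ``$\Leftarrow$'' direction I would give an explicit distribution. Set $c \equiv -\log_{L_0}L_1>0$ and pick any $\delta\in(0,c/2)$. Define
\[
p_0 = 1-\delta, \qquad p_j = \delta\cdot 2^{-j} \text{ for } j\ge 1.
\]
Then $\sum_{j=0}^\infty p_j = 1$, $p_0>0$, the support equals $\nn$ and is unbounded, and
\[
E \;=\; \delta\sum_{j=1}^\infty j\cdot 2^{-j} \;=\; 2\delta \;<\; c,
\]
so all standing assumptions hold and~(\ref{claim1}) is satisfied.

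For the ``$\Rightarrow$'' direction, assume a distribution $\pd$ compatible with the standing assumptions and satisfying~(\ref{claim1}) exists. Rewriting~(\ref{claim1}) as $E<-\log_{L_0} L_1$ and combining with $E>0$ (which follows from unbounded support as above) yields $-\log_{L_0}L_1>0$, i.e.\ $\log_{L_0}L_1<0$.

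There is no serious obstacle here; the only point requiring care is to remember that the standing hypothesis of unbounded support rules out the degenerate case $E=0$, which is what makes the converse direction work and excludes $L_1\ge 1$ (equivalently $\log_{L_0}L_1\ge 0$) from being compatible with~(\ref{claim1}).
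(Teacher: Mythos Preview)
Your approach is essentially the same as the paper's: rewrite~(\ref{claim1}) as $E<-\log_{L_0}L_1$ using $L_0>1$, then use $E>0$ to conclude. The paper's proof is a single line and does not bother constructing an explicit distribution or justifying $E>0$; your version is more thorough on both counts.

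One small slip in your explicit construction: you choose $\delta\in(0,c/2)$, but if $c\ge 2$ this allows $\delta\ge 1$, giving $p_0=1-\delta\le 0$. Just take $\delta\in\bigl(0,\min(1,c/2)\bigr)$ and everything goes through.
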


\begin{proof}
The condition from (\ref{claim1}) is satisfied if $L_1L_0^E<1$ which is equivalent to $E<\log_{L_0}\frac{1}{L_1}=-\log_{L_0}L_1$. Since $E>0$ the inequality $\log_{L_0}L_1<0$ guarantees the existence of a distribution of random times between impulses such that the system with impulses satisfies the contraction condition.
\end{proof}

\begin{remark}
If $L_0=1$ then the contraction property holds only if $L_1<1$. In this case the distribution of random times can be arbitrary.
\end{remark}

The inequality (\ref{claim1}) is appliable only for Lipchitz maps. The splitting condition does not require the map to be Lipschitz. We give a simple condition which ensures that the splitting property is satisfied.

\begin{claim}
\label{claim6}
If the functions $f,g$ are continuous, monotone and have unique attracting fixed points $a,b$, respectively, such that $g(a) \neq b$, then the splitting condition is satisfied.
\end{claim}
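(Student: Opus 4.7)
The plan is to exhibit two $\pp$-admissible state sequences, both terminating in state $0$, whose associated compositions applied to $I$ produce disjoint images. The guiding dynamical picture is as follows: one composition consists purely of iterates of $g$ and hence drives $I$ into a tight neighborhood of the attracting fixed point $b$; the other begins with the same block of $g$'s, then applies many iterates of $f$ (bringing the image near $a$), and finishes with one more $g$, landing near $g(a)$. The hypothesis $g(a)\ne b$ is exactly what is needed to separate these two target neighborhoods.

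To set up the quantitative bounds I would first use that, for continuous monotone self-maps of the compact interval $I$ with a unique attracting fixed point, one has $f^n(I)\to\{a\}$ and $g^n(I)\to\{b\}$ in Hausdorff distance. Fix $\delta = |g(a)-b|/3>0$, use continuity of $g$ at $a$ to choose $\varepsilon>0$ with $g\bigl([a-\varepsilon,a+\varepsilon]\cap I\bigr)\subseteq[g(a)-\delta,g(a)+\delta]$, then pick $M\in\N$ with $g^M(I)\subseteq[b-\delta,b+\delta]$ and $N\in\N$ satisfying both $f^N(I)\subseteq[a-\varepsilon,a+\varepsilon]$ and $p_N>0$ (such $N$ exists since $\{j\in\nn:p_j>0\}$ is unbounded). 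With these parameters I take the sequences
\begin{align*}
(a_1,\ldots,a_M) &= (\underbrace{0,\ldots,0}_{M}),\\
(b_1,\ldots,b_{M+N+1}) &= (\underbrace{0,\ldots,0}_{M},\,N,\,N-1,\,\ldots,\,1,\,0).
\end{align*}
Both end in $0$; admissibility of the first uses only $p_0>0$, while that of the second additionally uses $p_N>0$ together with the forced transitions $p_{k,k-1}=1$. The associated compositions are $g^M$ and $g\circ f^N\circ g^M$, whose images on $I$ are contained respectively in $[b-\delta,b+\delta]$ and in
\[
g(f^N(g^M(I)))\subseteq g(f^N(I))\subseteq g\bigl([a-\varepsilon,a+\varepsilon]\cap I\bigr)\subseteq[g(a)-\delta,g(a)+\delta].
\]
Since $|g(a)-b|=3\delta$ the two enclosing intervals are disjoint, which verifies the splitting condition~(\ref{splitting}).

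The main obstacle I anticipate is the preliminary contraction statement $g^n(I)\to\{b\}$ (and the analogous one for $f$). This would be handled by observing that $\{g^n(I)\}_{n\ge 1}$ is a nested family of closed subintervals of $I$, so their intersection $J$ is a closed $g$-invariant interval whose endpoints must themselves be fixed points of the strictly monotone $g$; the uniqueness of the attracting fixed point then forces $J=\{b\}$, whence $\di(g^n(I))\to 0$.
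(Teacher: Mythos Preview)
Your argument is correct and follows the same idea as the paper's proof: exhibit two admissible words ending in state $0$ so that one composition, $g^{\text{many}}$, lands near $b$ while the other, ending with $g\circ f^{\text{many}}$, lands near $g(a)\ne b$. The only difference is cosmetic---the paper takes the second word to be $(n,n-1,\ldots,1,0)$ (whose $\pp$-admissibility needs only $m_n>0$) rather than your $(0,\ldots,0,N,N-1,\ldots,1,0)$ (which additionally requires $p_N>0$), so your prefix of $M$ zeros and the appeal to unbounded support of $(p_j)$ are unnecessary but harmless.
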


\begin{proof}
From the continuity of $g$ and the assumption that $g(a)\neq b$ it is possible to choose disjoint neighbourhoods  $O(a)$ of $a$ and $O(b)$ of $b$ such that $g(O(a))\cap O(b)=\emptyset $.  Since  $a \in I$ is the unique attracting fixed point of the function $f$,  for any $x_0 \in I$ the sequence $\left\{f^n\left(x_0\right)\right\}_{n=0}^{\infty}$ converges to $a$. Similiarly, $b \in I$ is the unique attracting fixed point of the function $g$ so for any $y_0 \in I$, the sequence $\left\{g^n\left(y_0\right)\right\}_{n=0}^{\infty}$ converges to $b$. Hence for the neighbourhoods $O(a)$ and $O(b)$  there exists $n\in\mathbb{N}$ so that $f^{n}\left(I\right)\subset O(a)$ and $g^{n}\left(I\right)\subset O(b)$.  Now take $b_1=n, b_2=n-1,\dots , b_n=1, b_{n+1}=0$ and $a_1=a_2=\dots =a_n=0$. The sequences $a_1, \ldots , a_n$ and $b_1, \dots , b_{n+1} $ are $\ppp\text{-admissible}$, $a_n=b_{n+1}$. Since
\begin{align*}
f_{a_n}\circ\ldots \circ f_{a_1}(I)&=g^n(I)\subset O(b),\\
f_{b_{n+1}}\circ\ldots \circ f_{b_1}(I)&=g(f^n(I))\subset g(O(a)),
\end{align*}
the splitting condition is satisfied.
%From the injectivity of the function $g$ we have that there exist a neighbourhood $O_1(a)$ of the point $a$ such that $g\left(c\right)\neq b$ for all $c\in O_1(a)$. From the continuity of the function $g$ we get that there exist a neighbourhood $O_2(a)\subseteq O_1(a)$ such that $g\left(O_2(a)\right)\cap O(b)=\emptyset$. Moreover from the continuity ot the function $g$ we have $g\left(f^{n_1}(I)\right)\subset g\left(O(a)\right)$. At the beginning, we can choose the neighborhood $O(a)$ to be a subset neighborhood $O_2(a)$. Again from the continuity of the function $g$ we have $g\left(O(a)\right)\subseteq g\left(O_2(a)\right)$. This implies that from $g\left(O_2(a)\right)\cap O(b)=\emptyset$ we get $g\left(O(a)\right)\cap O(b)=\emptyset$. Therefore we obtain that $g(f^{n_1}\left(I\right))\cap g^{n_2}\left(I\right)=\emptyset$. It means that the splitting condition is satisfied.
\end{proof}

\begin{example}
Let  $f,g:\left<0,2\right>\to \left<0,2\right> $ be defined as $f(x)=\left(1-\frac{\sqrt{2}}{2}\right)x+\sqrt{2}$ and $g(x)=\sqrt{x}$ (see Figure \ref{obrazok90}). The unique attracting fixed point of the function $f$ is $2$ and the unique attracting fixed point of $g$ is $0$. Hence by Claim~\ref{claim6} the system satisfies the splitting condition. Since the function $f$ is not Lipschitz, the contraction condition cannot be applied in this case.
%The fixed point $2$ is an attracting because $\left|f'(x)\bigg|_{x=2}\right|=\left|1-\frac{\sqrt{2}}{2}\right|<1$. The fixed point of the function $g$ is $0$. The fixed point $0$ is also attracting because $g^n\left(\left<0,2\right>\right)=\left<0,\sqrt[2n]{2}\right>$, which for $n$ approaching infinity converges to only one point $0$. It means that Claim \ref{claim6} holds and the splitting condition is satisfied.
\begin{figure}[h!]
\centering
\resizebox*{6.1cm}{!}{\includegraphics{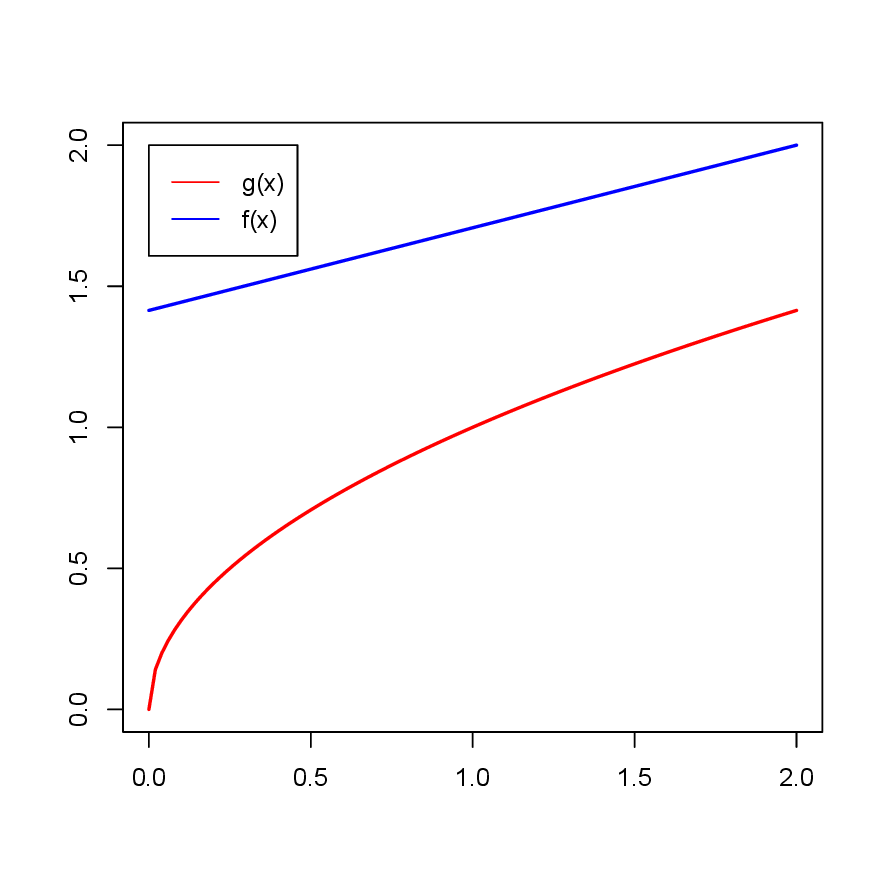}}
\caption{The functions $f(x)=\left(1-\frac{\sqrt{2}}{2}\right)x+\sqrt{2}$ (blue) and $g(x)=\sqrt{x}$ (red) on the interval~$\left<0,2\right>$.} 
\label{obrazok90}
\end{figure}
\end{example}

Random dynamical systems with impulses are related to random iterated function systems mentioned in the Introduction. Random iterated fuction system $\{\bar{X}_n\}_{n=0}^{\infty}$ generated by functions $f,g$ and parameter $p$ is defined as 
\begin{align}
\label{simple RDS}
\bar{X}_{n+1}=\begin{cases}
f(\bar{X}_n) & \text{with probability } p,\\
g(\bar{X}_n) & \text{with probability } 1-p,
\end{cases}
\end{align}
where $n\in\mathbb{N}_0$, $p\in\left(0,1\right)$ and $\bar{X}_0$ is a given starting point or a random variable.
%Let's denote $\chi_{h}\left(\xi\right)$ an indicator function, i. e. $\chi_{h}\left(\xi\right)=1$ if $\xi=h$ and $\chi_{h}\left(\xi\right)=0$ if $\xi\neq h$.

\begin{claim}
Let $\left\{X_n\right\}_{n=0}^{\infty}$ be the stochastic process defined in (\ref{split}) with the times between impulses following geometric distribution with parameter $1-p$, i.e. $p_k=(1-p)p^k$. Then $\left\{X_n\right\}_{n=0}^{\infty}$ and $\{\bar{X}_n\}_{n=0}^{\infty}$ define the same process. 
%This process has the Markov property.
%$T$ from (\ref{pulse distribution}), which has the geometric distribution and with initial distribution from (\ref{pociatocne}).
\end{claim}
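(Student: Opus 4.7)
The plan is as follows.  Both $(X_n)$ and $(\bar X_n)$ start from the same initial value and evolve by successively applying a function from $\{f,g\}$, so it is enough to realise both processes on a single probability space under which the \emph{same} sequence of applied functions drives them; once that is done, $X_n=\bar X_n$ almost surely and the two processes have identical finite-dimensional distributions.  Writing $C_n\in\{f,g\}$ for the function applied at step $n$ of the impulse system (so $C_n=g$ iff $F_n=0$), the heart of the matter is that, when the waiting times follow $p_k=(1-p)p^k$, the sequence $(C_n)_{n\ge 0}$ is iid with $\pp^\star(C_n=f)=p$ and $\pp^\star(C_n=g)=1-p$.  This is precisely where memorylessness of the geometric law will be used.

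To make this rigorous I would construct a coupling.  Let $(H_n)_{n\ge 0}$ be iid $\{f,g\}$-valued with $\pp(H_n=f)=p$ and $\pp(H_n=g)=1-p$, and use them as the driver of $\bar X_{n+1}=H_n(\bar X_n)$ as in~(\ref{simple RDS}).  Define the successive positions of $g$ by $\tau_1=\min\{n\ge 0:H_n=g\}$ and, for $j\ge 2$, $\tau_j=\min\{n>\tau_{j-1}:H_n=g\}$; then set $T_1=\tau_1$ and $T_j=\tau_j-\tau_{j-1}-1$ for $j\ge 2$.  A standard application of the strong Markov property to the iid Bernoulli sequence at the stopping times $\tau_j$ shows that the $T_j$ are iid with $\pp(T_j=k)=(1-p)p^k=p_k$, exactly the distribution prescribed for the impulse waiting times.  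Using these $T_j$ to build the controlling chain $\{F_n\}$ via the recipe below~(\ref{MPrechodu}), one verifies immediately that $F_n=0$ iff $n\in\{\tau_1,\tau_2,\ldots\}$, i.e.\ iff $H_n=g$; consequently $f_{F_n}=H_n$ for every $n\in\nn$.

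With this coupling in hand the conclusion is automatic: on the common probability space both recursions read $X_{n+1}=H_n(X_n)$ and $\bar X_{n+1}=H_n(\bar X_n)$, so an induction starting from $X_0=\bar X_0$ yields $X_n=\bar X_n$ for all $n$, and the two processes coincide in distribution.  I do not anticipate a real obstacle; the only delicate point is the distributional identity asserting that the gaps $T_j$ extracted from the iid Bernoulli sequence are themselves iid with law $p_k=(1-p)p^k$, and that is exactly where the geometric hypothesis is essential -- for any other discrete distribution on $\nn$ the induced coin-flip sequence would not be iid and the equivalence with the simple iterated function system would break.
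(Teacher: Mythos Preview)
Your argument is correct and proceeds by a genuinely different route from the paper's own proof. The paper takes a direct computational approach: it introduces indicator variables $F_n^\star\in\{0,1\}$ (with $F_n^\star=0$ meaning $g$ is applied at step $n$) and computes, for every finite pattern $(j_0,\ldots,j_{n-1})\in\{0,1\}^n$, the probability $\pp^\star(F_0^\star=j_0,\ldots,F_{n-1}^\star=j_{n-1})$ by explicitly listing the cylinders in $\Sigma$ compatible with the pattern, summing their $\pp^\star$-masses (splitting into the cases $j_{n-1}=0$, $j_{n-1}=1$, and $K=0$), and verifying in each case that the answer is $p^{n-K}(1-p)^K$ where $K$ is the number of zeros. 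This is an elementary but somewhat laborious case analysis.

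Your coupling bypasses this computation by invoking the standard fact that an iid Bernoulli sequence has iid geometric inter-success gaps (via the strong Markov property at the stopping times $\tau_j$). Having realised the waiting times $T_j$ inside the Bernoulli model, you obtain $f_{F_n}=H_n$ pathwise and hence $X_n=\bar X_n$ for all $n$, which immediately yields equality of \emph{all} finite-dimensional distributions, not just the one-dimensional marginals the paper's statement literally checks. The trade-off is that the paper's proof is completely self-contained, whereas yours appeals to (an easy instance of) the strong Markov property; on the other hand, your argument is shorter, makes the role of memorylessness transparent, and gives a slightly stronger conclusion.
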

\begin{proof}
We are going to show that for each $B\in\mathcal{B}\left(I\right)$ and $n\in\mathbb{N}_0$ the probability that $\bar{X}_n\in B$ is the same as the probability that $X_n\in B$.
These probabilities are determined by a sequence of functions $f,g$ which are applied to time $n$. 

Consider the probability space $\left(\Sigma,\mathcal{F},\pp^{\star}\right)$ and  the random variables $F^{\star}_n:\Sigma \to \left\{0,1\right\}$ defined by
\begin{align*}
F^{\star}_n\left(\omega\right)=\begin{cases}
0 & \text{if } \omega_n=0,\\
1 & \text{if } \omega_n\geq 1.
\end{cases}
\end{align*}
The random process $\left\{X_n\right\}_{n=0}^{\infty}$ defined in (\ref{split}) can be written as
\begin{align}
\label{proces1}
X_{n+1}\left(\omega\right)=f_{F_n^{\star}\left(\omega\right)}\left(X_n\left(\omega\right)\right),
\end{align}
for every $n\in\mathbb{N}_0$, $f_0\equiv g$ and $f_1\equiv f$. Then the probability that $X_n\in B$ is 
\begin{align}
\label{pp_geo}
\pp^\star \left(X_n\in B\right)=\sum\limits_{j_0,\ldots,j_{n-1}\in\left\{0,1\right\}: f_{j_{n-1}}\circ \ldots \circ f_{j_0}(X_0)\in B}\pp^{\star}\left(F^\star_0=j_0,\ldots,F^\star_{n-1}=j_{n-1}\right).
\end{align}
Note that the event $\{F^\star_0=j_0,\ldots,F^\star_{n-1}=j_{n-1}\}$ corresponds to some cylinder(s) on $\Sigma$, which can be used to compute the probabilities on the right-hand side of (\ref{pp_geo}). For example, the event $\{F^\star_0=1,F^\star_{1}=1,F^\star_{2}=0, F^\star_{3}=1, F^\star_{4}=0\}$ corresponds to the cylinder $[2,1,0,1,0]$, hence $\pp^\star\left(\{F^\star_0=1,F^\star_{1}=1,F^\star_{2}=0, F^\star_{3}=1, F^\star_{4}=0\}\right)=\pp^*([2,1,0,1,0])=p_2p_1=(1-p)^2p^3$. To construct the corresponding cylinder let $K$ express the number of $g\text{'s}$, i.e.  $K=n-\sum\limits_{i=0}^{n-1}j_i$. For~$K>0$ 
 denote 
\begin{align*}
\vartheta_1=\min\{0\leq k\leq n-1:j_k=0\},\\
\vartheta_l=\min\{\vartheta_{l-1}< k\leq n-1:j_k=0\},
\end{align*}
and cylinders
\begin{align*}
\Xi_1=\left[\vartheta_1,\vartheta_1-1,\vartheta_1-2,\ldots,0\right],\\
\Xi_l=\left[\vartheta_l-\vartheta_{l-1}-1, \vartheta_l-\vartheta_{l-1}-2,\ldots,0\right],
\end{align*}
for $l=2,\ldots,K$. If $j_{n-1}=0$ the term from the sum in (\ref{pp_geo}) can be expressed  in the following way
\begin{align*}
&\pp^{\star}\left(F^\star_0=j_0,\ldots,F^\star_{n-1}=j_{n-1}\right)=\pp^{\star}\left(\Xi_1\star \Xi_2\star \ldots \star \Xi_{K}\right)=p_{\vartheta_1}\prod\limits_{l=2}^{K}p_{\vartheta_{l}-\vartheta_{l-1}-1}=\\
&=p^{\vartheta_1}(1-p)\prod\limits_{l=2}^{K}p^{\vartheta_{l}-\vartheta_{l-1}-1}(1-p)=(1-p)^K p^{\vartheta_1}\prod\limits_{l=2}^{K}p^{\vartheta_{l}-\vartheta_{l-1}-1}=\\
&=(1-p)^Kp^{\vartheta_K-(K-1)}=|\text{ since $j_{n-1}=0$ }|=\\
&=(1-p)^Kp^{n-1-(K-1)}=(1-p)^Kp^{n-K}.
\end{align*}
If $j_{n-1}=1$, the situation is more complicated, because there are more cylinders corresponding to the event $\{F^\star_0=j_0,\ldots,F^\star_{n-1}=j_{n-1}\}$ - for example, the event $\{F^\star_0=1,F^\star_{1}=0,F^\star_{2}=1, F^\star_{3}=1 \}$ corresponds to the union of cylinders $[1,0,2,1], [1,0,3,2], [1,0,4,3],\ldots$. Formally, if $j_{n-1}=1$, then $n-1-\vartheta_K>0$ and we can denote cylinders
\begin{align*}
\Xi^{d}_{K+1}=\left[d, d-1,\ldots,d-n+\vartheta_K+2\right],
\end{align*}
where $d\in\mathbb{N}_0$ such that $d\geq n-1-\vartheta_K$ (in the above-mentioned example $n=4$, $K=1$ and $\vartheta_K=1$ and the corresponding cylinders are $[1,0]\star[2,1], [1,0]\star[3,2],[1,0]\star[4,3]\ldots$, i.e. the ,,ending'' cylinders are $\Xi_2^2=[2,1]$, $\Xi_2^3=[3,2]$, $\Xi_2^4=[4,3]$, etc.).
In this case the term from the sum in (\ref{pp_geo}) can be expressed
\begin{align*}
&\pp^{\star}\left(F^\star_0=j_0,\ldots,F^\star_{n-1}=j_{n-1}\right)=\sum\limits_{d=n-1-\vartheta_K}^{\infty}\pp^{\star}\left(\Xi_1\star \Xi_2\star \ldots \star \Xi_{K}\star \Xi^{d}_{K+1} \right)=\\
&=\left(p_{\vartheta_1}\prod\limits_{l=2}^{K}p_{\vartheta_{l}-\vartheta_{l-1}-1}\right)\sum\limits_{d=n-1-\vartheta_K}^{\infty}p_{d}=\\
&=\left(p^{\vartheta_1}(1-p)\prod\limits_{l=2}^{K}p^{\vartheta_{l}-\vartheta_{l-1}-1}(1-p)\right)\sum\limits_{d=n-1-\vartheta_K}^{\infty}p^d(1-p)=\\
&=\left(p^{\vartheta_1}(1-p)\prod\limits_{l=2}^{K}p^{\vartheta_{l}-\vartheta_{l-1}-1}(1-p)\right)(1-p)p^{n-1-\vartheta_K}\sum\limits_{d=0}^{\infty}p^d=\\
&=p^{n-K}(1-p)^{K+1}\frac{1}{1-p}=(1-p)^{K}p^{n-K}.
\end{align*}

If $K=0$, then  $j_i=1$ for $i=0,\ldots,n-1$ and 
\begin{align*}
&\pp^{\star}\left(F^\star_0=j_0,\ldots,F^\star_{n-1}=j_{n-1}\right)=\sum\limits_{d=n}^{\infty}\pp^{\star}\left(\left[d\right]\right)=\sum\limits_{d=n}^{\infty}p^d(1-p)=\\
&=p^{n}(1-p)\sum\limits_{d=0}^{\infty}p^d=p^{n}(1-p)\frac{1}{1-p}=p^{n}(1-p)^0=p^{n-K}(1-p)^K.
\end{align*}

In each situation described above, we have obtained
\begin{align*}
\pp^{\star}\left(F^\star_0=j_0,\ldots,F^\star_{n-1}=j_{n-1}\right)=p^{n-K}(1-p)^K,
\end{align*}
which is exactly the probability corresponding to (\ref{simple RDS}), i.e.  to  the random iterated function system  generated by functions $f, g$, where the functions are chosen independently with probability $p$ and $1-p$, repectively. 
\end{proof}
%The following claim gives a relation between the stationary distribution of the random process $\{Z_n\}_{n=0}^\infty$ defined in $1.1$ and the stationary distribution of the corresponding random iteration function system in case that this stationary distribution is unique.

Let $\{\tilde{X}_n\}_{n=0}^{\infty}$ be a random iterated function system with probabilities, where $\tilde{f}_k:I\to I$ is defined as $\tilde{f}_k(x)=g\circ f^k(x)$ for every $k\in\mathbb{N}_0$ and
\begin{align}
\label{vlnovkovy}
\tilde{X}_{n+1}=\tilde{f}_k(\tilde{X}_n) \text{ with probability } p_k,
\end{align}
for every $n\in\mathbb{N}_0$, $\tilde{X}_0$ is a given starting point or a random variable. The stationary distribution of the random process $\{\hat{Z}_n\}_{n=0}^\infty$ defined in the section $1.1$ can be written using the stationary distribution of $\{\tilde{X}_n\}_{n=0}^{\infty}$  under the assumptions  formulated in the following  claim.

\begin{claim}
\label{presnetvary}
Let $\{\tilde{X}_n\}_{n=0}^{\infty}$ be the stochastic process defined in (\ref{vlnovkovy}) with a unique stationary distribution $\tilde{\nu}$ and $\ppp\left(S\right)=1$. Then
\begin{itemize}
    \item the random proces $\{\hat{Z}_n\}_{n=1}^{\infty}$ has the stationary distribution $\hat{\mu}^\star$, which is of the form
    \begin{align}
    \label{hatmu}
\hat{\mu}^\star\left(\{0\}\times A\right)=\frac{\tilde{\nu}\left(A\right)}{1+E} \quad\text{ and}\quad\hat{\mu}^\star\left(\{k\}\times A\right)=\sum\limits_{j=0}^{\infty}\frac{p_{k+j}}{1+E}\tilde{\nu}\left(f^{-j-
1}\left(A\right)\right),
    \end{align}
for every $k\in\mathbb{N}$ and $A\in \B$,
\item the random process $\left\{X_n\right\}_{n=0}^{\infty}$ has the limit distribution $\nu$, which is of the form
    \begin{align*}
\nu\left( A\right)=\sum\limits_{k=0}^{\infty}\hat{\mu}^\star\left(\{k\}\times A\right),
    \end{align*}
for every $A\in \B$.    
\end{itemize}
\end{claim}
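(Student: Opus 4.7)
The plan is to show that the measure $\hat{\mu}^\star$ defined by (\ref{hatmu}) is a probability measure on $(\hat{M},\hat{\mathcal{S}})$ invariant under the Markov operator $T$ from (\ref{operator T}), and then to invoke Theorem~\ref{thm1} to obtain both uniqueness of $\hat{\mu}^\star$ and the weak convergence of the distributions of $\{\hat{Z}_n\}_{n=1}^\infty$ to it. The second bullet will follow by projecting onto the second coordinate. The routine preliminary is to check $\hat{\mu}^\star(\hat{M})=1$: since $\tilde{\nu}(f^{-j-1}(I))=1$, the total mass reduces to $\frac{1}{1+E}\bigl(1+\sum_{k\ge 1}\sum_{j\ge 0}p_{k+j}\bigr)$, and reindexing by $\ell=k+j$ (each $\ell\ge 1$ appearing $\ell$ times) turns the double sum into $\sum_{\ell\ge 1}\ell\,p_\ell=E$, giving total mass~$1$.

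The key step is to verify invariance on rectangles. For $\hat{B}=\{k\}\times A$ the operator collapses to
\[
T\hat{\mu}^\star(\{k\}\times A)=\mu^\star_{k+1}(f_k^{-1}(A))+p_k\mu^\star_0(f_k^{-1}(A)).
\]
If $k\ge 1$ then $f_k=f$; substituting (\ref{hatmu}) into $\mu^\star_{k+1}(f^{-1}(A))$ and using $f^{-j-1}\circ f^{-1}=f^{-(j+2)}$, the shift $i=j+1$ produces precisely the $i\ge 1$ terms of $\hat{\mu}^\star(\{k\}\times A)$, while $p_k\mu^\star_0(f^{-1}(A))=\tfrac{p_k}{1+E}\tilde{\nu}(f^{-1}(A))$ is exactly the missing $i=0$ term. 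If $k=0$ then $f_0=g$, and the identity $f^{-j-1}\circ g^{-1}=(g\circ f^{j+1})^{-1}=\tilde{f}_{j+1}^{-1}$ rewrites
\[
T\hat{\mu}^\star(\{0\}\times A)=\frac{1}{1+E}\sum_{i=0}^\infty p_i\,\tilde{\nu}(\tilde{f}_i^{-1}(A)),
\]
whose right-hand side equals $\tilde{\nu}(A)/(1+E)$ precisely because $\tilde{\nu}$ is stationary for the IFS~(\ref{vlnovkovy}). Hence $T\hat{\mu}^\star$ and $\hat{\mu}^\star$ agree on every rectangle, and therefore on all of $\hat{\mathcal{S}}$ via the decomposition $\hat{B}=\bigcup_j\{j\}\times B_j$.

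With invariance established, Theorem~\ref{thm1} (applicable since $\ppp(S)=1$ by assumption) yields $T^n\hat{\mu}\w\hat{\pi}\odot\ppp$ for every initial probability $\hat{\mu}$. Choosing $\hat{\mu}=\hat{\mu}^\star$ and using invariance forces $\hat{\mu}^\star=\hat{\pi}\odot\ppp$, so $\hat{\mu}^\star$ is the unique $T$-invariant probability measure and the weak limit of the distributions of $\{\hat{Z}_n\}_{n=1}^\infty$. For the second bullet, the projection $(k,x)\mapsto x$ is continuous, so the distributions of $X_n=\mathrm{proj}_2(\hat{Z}_n)$ converge weakly to the pushforward $\nu(A)=\hat{\mu}^\star(\nn\times A)=\sum_{k=0}^\infty\hat{\mu}^\star(\{k\}\times A)$, which is the asserted formula. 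The main obstacle is purely cosmetic, namely keeping the index shift $i=j+1$ transparent in the $k\ge 1$ case; the whole argument hinges on cleanly translating the stationarity identity $\tilde{\nu}(A)=\sum_i p_i\tilde{\nu}(\tilde{f}_i^{-1}(A))$ through the definition~(\ref{hatmu}).
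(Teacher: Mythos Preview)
Your proof is correct and follows essentially the same approach as the paper: verify that $\hat{\mu}^\star$ satisfies the stationarity equations on rectangles $\{k\}\times A$ (your use of the explicit operator $T$ from (\ref{operator T}) produces exactly the paper's equations (\ref{rv1}) and (\ref{rv2})), check them via the same index shift for $k\ge 1$ and the stationarity of $\tilde{\nu}$ for $k=0$, then invoke Theorem~\ref{thm1} and project onto the second coordinate. You additionally check $\hat{\mu}^\star(\hat{M})=1$, which the paper omits.
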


\begin{proof}\phantom{\qedhere}\hspace{-0.85cm}
The measure $\hat{\mu}^\star$ is a stationary distribution for the homogeneous Markov process $\{\hat{Z}_n\}_{n=1}^{\infty}$ iff
\begin{align}
\label{integral vztah}
\hat\mu^*(\hat{B})=\int\limits_{\mathbb{N}_0\times I}Pr(\hat{Z}_{2}\in \hat{B}\left|\hat{Z}_{1}=(k,x)\right.)\dd \hat\mu^*\left(k,x\right),
\end{align}
 for every $\hat{B}\in \hat{\mathcal{S}}$ (cf. \cite{BM} relation (9.1') on page 177).
 
First take $\hat{B}=\{0\}\times A$. Since $\hat{Z}_{2}\in \{0\}\times A$ if and only if $\hat{Z}_{1}\in \{0\}\times g^{-1}(A)$ or $\hat{Z}_{1}\in \{1\}\times g^{-1}(A)$, the state $\{0\}\times A$ can be reached either from the state $\{0\}\times g^{-1}(A)$ with probability $p_0$ or from the state $\{1\}\times g^{-1}(A)$ with probability $1$. Thus from (\ref{integral vztah}) we get
\begin{align}
\label{rv1}
\hat{\mu}^\star\left(\{0\}\times A\right)=p_0\hat{\mu}^\star\left(\{0\}\times g^{-1}(A)\right)+\hat{\mu}^\star\left(\{1\}\times g^{-1}(A)\right).
\end{align}

Now let $\hat{B}=\{k\}\times A$, where $k\in\mathbb{N}$. A set $\left\{k\right\}\times A$ for $k\in\mathbb{N}$ can be reached either from the set $\{0\}\times f^{-1}(A)$ with probability $p_k$ or from the set $\{k+1\}\times f^{-1}(A)$ with probability $1$.
Thus, from (\ref{integral vztah}) we obtain
\begin{align}
\label{rv2}
\hat{\mu}^\star\left(\{k\}\times A\right)=p_k\hat{\mu}^\star\left(\{0\}\times f^{-1}(A)\right)+\hat{\mu}^\star\left(\{k+1\}\times f^{-1}(A)\right).
\end{align}

Moreover, the stationary distribution $\tilde{\nu}$ of (\ref{vlnovkovy}) satisfies
\begin{align}
\label{s2}
\tilde{\nu}\left(A\right)=\sum\limits_{k=0}^{\infty}p_k\tilde{\nu}\left(\tilde{f}^{-1}_k(A)\right)=p_0\tilde{\nu}\left(g^{-1}(A)\right)+\sum\limits_{k=1}^{\infty}p_k\tilde{\nu}\left(f^{-k}\circ g^{-1}(A)\right).
\end{align}

Now it is possible to verify that the distribution $\hat{\mu}^\star$ given by (\ref{hatmu}) satisfies (\ref{rv1}) and (\ref{rv2}). The right-hand side of the equality (\ref{rv1}) can be written as
\begin{align*}
&p_0\hat{\mu}^\star\left(\{0\}\times g^{-1}(A)\right)+\hat{\mu}^\star\left(\{1\}\times g^{-1}(A)\right)\stackrel{(\ref{hatmu})}{=\joinrel=}\\
&\stackrel{(\ref{hatmu})}{=\joinrel=}p_0\frac{\tilde{\nu}\left(g^{-1}(A)\right)}{1+E}+\sum\limits_{j=0}^{\infty}\frac{p_{1+j}}{1+E}\tilde{\nu}\left(f^{-j-1}\circ g^{-1}\left(A\right)\right)=\\
&=\frac{1}{1+E}\left(p_0\tilde{\nu}\left(g^{-1}(A)\right)+\sum\limits_{k=1}^{\infty}p_{k}\tilde{\nu}\left(f^{-k}\circ g^{-1}(A)\right)\right)\stackrel{(\ref{s2})}{=\joinrel=}\\
&\stackrel{(\ref{s2})}{=\joinrel=}\frac{\tilde{\nu}(A)}{1+E}\stackrel{(\ref{hatmu})}{=\joinrel=}\hat{\mu}^\star\left(\{0\}\times A\right).
\end{align*} 
%The $\tilde{\pi}$ is stationary distribution of the (\ref{vlnovkovy}) so it must hold
%\begin{align}
%\label{s2}
%\tilde{\pi}\left(A\right)=\sum\limits_{k=0}^{\infty}p_k\tilde{\pi}\left(\tilde{f}^{-1}_k(A)\right)=p_0\tilde{\pi}\left(g^{-1}(A)\right)+\sum\limits_{k=1}^{\infty}p_k\tilde{\pi}\left(f^{-k}\circ g^{-1}(A)\right).
%\end{align}

%The state $\{k\}\times A$ for $k\in\mathbb{N}$ can be reached either from the state $\{0\}\times f^{-1}(A)$ with probability $p_k$ or from the state $\{k+1\}\times f^{-1}(A)$ with probability $1$.
%Thus, $\hat{\mu}^\star\left(\{k\}\times A\right)$ must hold
%\begin{align}
%\label{rv2}
%\hat{\mu}^\star\left(\{k\}\times A\right)=p_k\hat{\mu}^\star\left(\{0\}\times f^{-1}(A)\right)+\hat{\mu}^\star\left(\{k+1\}\times f^{-1}(A)\right).
%\end{align}

Similiarly, the right-hand side of the equality (\ref{rv2}) can be written as
\begin{align*}
&p_k\hat{\mu}^\star\left(\{0\}\times f^{-1}(A)\right)+\hat{\mu}^\star\left(\{k+1\}\times f^{-1}(A)\right)\stackrel{(\ref{hatmu})}{=\joinrel=}\\
&\stackrel{(\ref{hatmu})}{=\joinrel=}p_k\frac{\tilde{\nu}\left(f^{-1}(A)\right)}{1+E}+\sum\limits_{j=0}^{\infty}\frac{p_{k+1+j}}{1+E}\tilde{\nu}\left(f^{-j-1}\circ f^{-1}\left(A\right)\right)\stackrel{}{=}\\
&\stackrel{}{=}\frac{1}{1+E}\left(p_k\tilde{\nu}\left(f^{-1}(A)\right)+\sum\limits_{l=1}^{\infty}p_{k+l}\tilde{\nu}\left(f^{-l}\circ f^{-1}\left(A\right)\right)\right)\stackrel{}{=}\\
&\stackrel{}{=}\frac{1}{1+E}\left(\sum\limits_{l=0}^{\infty}p_{k+l}\tilde{\nu}\left(f^{-l-1}\left(A\right)\right)\right)\stackrel{(\ref{hatmu})}{=\joinrel=}\hat{\mu}^\star\left(\{k\}\times A\right).
\end{align*} 
Hence we proved that $\hat{\mu}^\star$ given by (\ref{hatmu})  is a stationary distribution for the stochastic process $\{\hat{Z}_n\}_{n=1}^\infty$. 

Since  $\ppp(S)=1$, the two-dimensional stochastic process $\{\hat{Z}_n\}_{n=1}^{\infty}$ where $\hat{Z}_n\left(\omega\right)=\left(\omega_{n-1}, X_n\left(\omega\right)\right)$ satisfies Theorem \ref{thm1}. It means that for any initial distribution $\hat\mu$ the sequence of measures $T^n\hat{\mu}$ converges weakly to $\hat{\pi}\odot \ppp$. Moreover, the Markov process $\{\hat{Z}_n\}_{n=1}^{\infty}$ has the stationary distribution $\hat{\mu}^\star$ so $\hat{\pi}\odot \ppp=\hat{\mu}^\star$. Applying Continuous Mapping Theorem and considering the continuity of the projection onto the second coordinate, we conclude that the stochastic process $\left\{X_n\right\}_{n=0}^{\infty}$ also has a limit distribution~$\nu$, which takes the form
\begin{align*}
\nu\left(A\right)=\sum\limits_{k=0}^{\infty}\hat{\mu}^\star\left(\left\{k\right\}\times A \right).\tag*{\qed}
\end{align*}
\end{proof}

In the following example, we will show the use of Claim \ref{presnetvary}. We will assume that the time between impulses $T_i$ has the Bernoulli distribution with parameter $p=\frac{1}{2}$ for every $i\in\N$, despite the fact that in the whole paper we assume that $T_i$ takes  infinitely but countably many values. The Bernoulli distribution was chosen because in this case the stationary distribution of the stochastic process $\{\tilde{X}_n\}_{n=0}^{\infty}$ is known.
%in (\ref{vlnovkovy})
\begin{example}
Let the functions $f,g:\left<0,2\right>\to \left<0,2\right> $ be defined as $g(x)=\frac{x}{2}$ and $f(x)=x+1$ for $x\in\left<0,1\right>$ and $f(x)=2$ for $x\in \left(1,2\right>$. Assume that the time between impulses $T_i$ has the Bernoulli distribution with paramater $p=\frac{1}{2}$ for every $i\in\N$. The function $g$ maps the interval $\left<0,2\right>$ to the interval $\left<0,1\right>$, therefore the system (\ref{vlnovkovy}) will operate only on the interval $\left<0,1\right>$ and from the second iteration onwards will have the form
\begin{align}
\label{vlnovkovy1}
\tilde{X}_{n+1}=\begin{cases}
g(\tilde{X}_{n})=\frac{\tilde{X}_{n}}{2} &\text{with probability }\frac{1}{2},\\
g(f(\tilde{X}_{n}))=\frac{\tilde{X}_{n}+1}{2} &\text{with probability }\frac{1}{2},
\end{cases}
\end{align}
regardless of the form of the function $f$ on the interval $\left(1,2\right>$.

 It is well known that the uniform distribution on the interval $\left<0,1\right>$, denote it as $\tilde{\nu}$, is a unique stationary distribution of the stochastic process define in (\ref{vlnovkovy1}) (cf.~\cite{BM} Exercise 4.1 on page 248). 

From (\ref{hatmu}), the stationary distribution $\hat{\mu}^\star$ of the stochastic proces $\{\hat{Z}_n\}_{n=1}^\infty$ has the form
\begin{align*}
\hat{\mu}^\star\left(\{0\}\times A\right)=\frac{\tilde{\nu}\left(A\right)}{1+E} \quad \text{and} \quad \hat{\mu}^\star\left(\{1\}\times A\right)=\frac{p_{1}}{1+E}\tilde{\nu}\left(f^{-1}\left(A\right)\right)
\end{align*}
for $A\in \mathcal{B}(\left<0,2\right>)$, where $p_1=p$ and $E\equiv E(T_1)=\frac{1}{2}$. The Lipschitz constant $L_1$ of the function $f$ is $1$. The Lipschitz constant $L_0$ of the function $g$ is $\frac{1}{2}$.  It is possible to use the relation~(\ref{claim1})  because $1<\sqrt{2}$. This implies that by Theorem~\ref{thm1} the sequence $T^n\hat{\mu}$ convergences weakly to $\hat{\pi}\odot \ppp$. Moreover $\{\hat{Z}_n\}_{n=1}^\infty$ has stationary distribution $\hat{\mu}^*$ so $\hat{\mu}^*=\hat{\pi}\odot \ppp$. From Claim \ref{presnetvary} we obtain that the limit distribution of the process $\left\{X_n\right\}_{n=0}^{\infty}$ is
\begin{align*}
\nu(A)=\hat{\mu}^\star\left(\{0\}\times A\right)+\hat{\mu}^\star\left(\{1\}\times A\right)=\frac{\tilde{\nu}\left(A\right)}{1+E} +\frac{p_{1}}{1+E}\tilde{\nu}\left(f^{-1}\left(A\right)\right).
\end{align*}

Now it is easy to obtain the distribution function $F$ of the limit distribution as $F(a)= \nu (\left(-\infty, a\right>)$. For the uniform distribution $\tilde\nu$ we have 
\begin{align*}
\tilde\nu(\left(-\infty, a\right>)=
\begin{cases}
0 & \text{if } a \le 0,\\
a & \text{if } a \in (0,1),\\
1 & \text{if } a \ge 1,
\end{cases}
\end{align*}
therefore 
\begin{itemize}
    \item if  $a\in\left(-\infty,0\right)$ then
\begin{align*}
\textstyle F(a)=\tilde{\nu}\left(\left(-\infty, a\right>\right)\cdot\frac{2}{3} +\tilde{\nu}\left(f^{-1}(\left(-\infty, a\right>)\right)\cdot\frac{1}{3}=0 + \tilde\nu(\emptyset)\cdot\frac{1}{3}=0,
\end{align*}
    \item if  $a\in\left<0,1\right)$ then
\begin{align*}
\textstyle F(a)&=\textstyle 
\tilde{\nu}\left(\left(-\infty,a\right>\right)\cdot\frac{2}{3} +\tilde{\nu}\left(f^{-1}(\left(-\infty,a\right>)\right)\cdot\frac{1}{3}=\textstyle \frac{2}{3}a+\tilde{\nu}\left(\emptyset\right)\cdot\frac{1}{3}=\frac{2}{3}a,
\end{align*}
\item if $a\in\left<1,2\right)$ then
\begin{align*}
\textstyle F(a)&=\textstyle \tilde{\nu}\left(\left(-\infty,a\right>\right)\cdot\frac{2}{3} +\tilde{\nu}\left(f^{-1}(\left(-\infty,a\right>)\right)\cdot\frac{1}{3}=1\cdot\frac{2}{3}+\tilde{\nu}\left(\left<0,a-1\right>\right)\cdot\frac{1}{3}=\\
&=\textstyle \frac{2}{3}+\frac{1}{3}(a-1)=\frac{1}{3}a+\frac{1}{3},
\end{align*}
\item if $a\in\left<2,\infty\right)$ then
\begin{align*}
\textstyle 
\textstyle F(a)&=\textstyle\tilde{\nu}\left(\left(-\infty,a\right>\right)\cdot\frac{2}{3} +\tilde{\nu}\left(f^{-1}(\left(-\infty,a\right>\right)\cdot\frac{1}{3}=1\cdot\frac{2}{3}+\tilde{\nu}\left(\left<0,2\right>\right)\cdot\frac{1}{3}=\\
&=\textstyle \frac{2}{3}+\frac{1}{3}=1.
\end{align*}
\end{itemize}
%(cf. \cite{BM} Exercise 4.1).

In Figure \ref{obrazok9}, we demonstrate the convergence of the random process (\ref{vlnovkovy1}) to its limit distribution. We compare the empirical distribution function with the distribution function corresponding to the limit distribution $\nu$. The empirical distribution function is depicted for only $1000$ realizations in Figure \ref{obrazok9}a, while in Figure \ref{obrazok9}b it is depicted for $80000$ realizations.

\begin{figure}[h!]
\centering
\subfloat[Empirical distribution function for $1000$\\ realizations.]{%
\resizebox*{6.1cm}{!}{\includegraphics{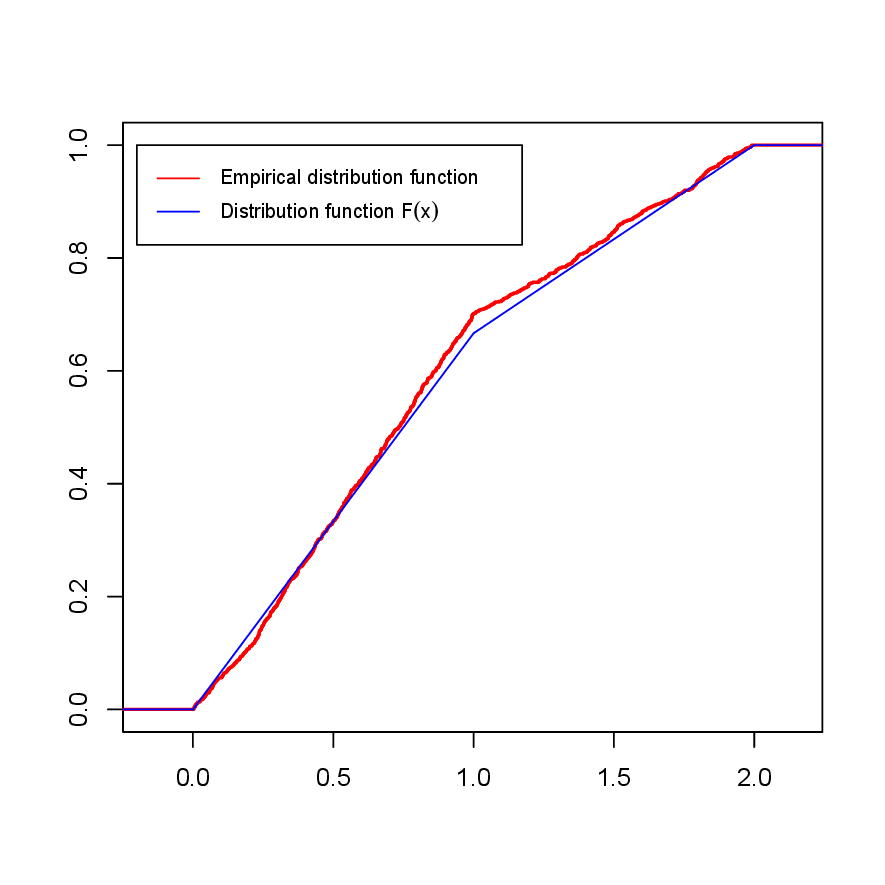}}}
\hspace{5pt}
\subfloat[Empirical distribution function for $80000$ realizations.]{%
\resizebox*{6.1cm}{!}{\includegraphics{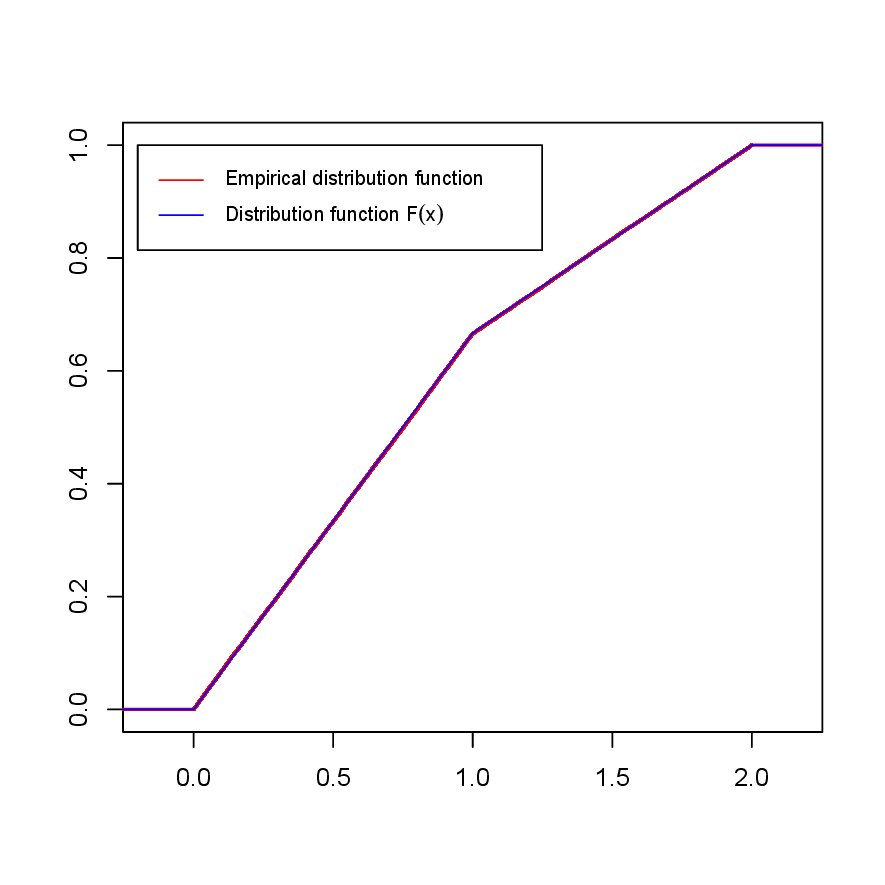}}}
\caption{Comparison of the empirical distribution functions with the distribution function $F$ of of the limit distribution $\nu$.} 
\label{obrazok9}
\end{figure}
\end{example}

\section{Appendix}
\label{priloha}
\begin{proof}[Proof of Lemma \ref{L1}]
The set $\hat{M}=\mathbb{N}_0\times I$ can be written in the form $\hat{M}=\bigcup\limits_{j=0}^{\infty}\left\{j\right\}\times~I$. If we use the $\sigma\text{-additivity}$ of the Lebesgue integral, we get
\begin{align*}
\int\limits_{\hat{M}} \hat{h}\dd\hat\nu = \sum_{j=0}^\infty \int\limits_{\{j\}\times I} \hat{h} \dd\hat{\nu}.
\end{align*}
To prove the lemma it suffices to show
\begin{align}
\label{mmiery}
\int\limits_{\{j\}\times I}\hat{h}\dd\hat{\nu}=\int\limits_I h_j \dd\nu_j,
\end{align}
where $h_j(x)\equiv \hat{h}(j,x)$ and $\nu_j(C)\equiv \hat{\nu}(\{j\}\times C)$ for any $C\in \mathcal{B}(I)$. Assume that $\hat{h}=\chi_{\hat{B}}$ is the indicator function of a set $\hat{B}\in \mathcal{\hat{S}}$. Then $\hat{B}=\bigcup\limits_{j=0}^\infty \{j\}\times B_j$ where $B_j=\{x\in I: (j,x)\in \hat{B}\}$ and
\begin{align*}
\int\limits_{\{j\}\times I }\chi_{\hat{B}}\dd\hat{\nu}=\int\limits_{\hat{M}} \chi_{\hat{B}\cap(\{j\}\times I)}\dd\hat{\nu}=\hat{\nu}(\{j\}\times B_j)=\nu_j(B_j)=\int\limits_I \chi_{B_j} \dd\nu_j ,
\end{align*}
hence (\ref{mmiery}) holds. Further, we can proceed using standard techniques of the measure theory and show that (\ref{mmiery}) holds for any continuous bounded  function.
%We can use change of variables for pushforward measure at the integral $\int\limits_{\{j\}\times I} \hat{g}(k,x) \dd \hat{\nu}(k,x)$. Let's define for fixed $j\in\N_0$ a mapping $\tau_j:\{j\}\times I \to I$ as $\tau_j\left(k,x\right)=x$. Then $\tau_j^{-1}\left(C\right)=\{j\}\times C$ for every $C\in\B$. It means that $\tau_j^{-1}\left(I\right)=\{j\}\times I$. Moreover the $\hat{g}(j,x)=g_j\circ\tau_j(k,x)$ where $g_j(x)\equiv\hat{g}(j,x)$ and $\hat\nu(\tau_j^{-1}(C))=\hat\nu(\{j\}\times C)$. The function $\hat{g}(k,x)$ define on the set $\{j\}\times I$ takes the same values as a function $g_j(x)\equiv\hat{g}(j,x)$ defined on the set $I$. Similarly, the measure $\hat\nu$ defined on $(\{j\}\times I,\{j\}\otimes\B)$ takes the same value as the measure $\nu_j(C)\equiv\hat\nu(\{j\}\times C)$ defined on $(I,\B)$. Hence we have 
%\begin{align*}
%\int\limits_{\{j\}\times I} \hat{g}(k,x) \dd \hat{\nu}(k,x)=\int\limits_{I} g_j(x) \dd \nu_j(x).
%\end{align*}
%This concludes the proof.
\end{proof}

%\begin{proof}[Proof of Lemma \ref{L3}]
%
%\end{proof}

\begin{proof}[Proof of Lemma \ref{L4}]
Let $T$ be the transition operator of the Markov process $\{\hat{Z}_n\}_{n=1}^{\infty}$, which is given by a transition matrix $P=\left(p_{ik}\right)_{i,k=0}^{\infty}$ and by  functions $\{f_i\}_{i=0}^{\infty}$. Then for any measure $\hat{\mu}$ defined on $\hat{\mathcal{S}}$ and any $\hat{B}\in\hat{\mathcal{S}}$ we have
\begin{align}
\label{operator T1}
T\hat{\mu}(\hat{B})=\sum\limits_{i=0}^{\infty}\sum\limits_{k=0}^{\infty}p_{ik}\mu_i\left(f_k^{-1}\left(B_k\right)\right).
\end{align}
Note that in our case for a fixed $k\in\mathbb{N}_0$ only $p_{0k}=p_k$ and $p_{(k+1)k}=1$ are non-zero and thus the relation (\ref{operator T1}) is consistent with (\ref{operator T}). However, in this proof it will be easier to use the general form (\ref{operator T1}). 

The proof will proceed by induction. Let's denote $\hat{B}\equiv \left\{j\right\}\times C$, where $j\in\mathbb{N}_0$ and $C\in\mathcal{B}\left(I\right)$. For $n=1$ we obtain
\begin{align}
\label{indukcia1}
\begin{split}
\left(T^n\hat{\mu}\right)_j\left(C\right)&=\left(T\hat{\mu}\right)_j\left(C\right)=T\hat{\mu}(\hat{B})=\sum\limits_{i=0}^{\infty}\sum\limits_{k=0}^{\infty}p_{ik}\mu_i\left(f_k^{-1}\left(B_k\right)\right)\stackrel{(a)}{=}\\
&\stackrel{(a)}{=}\sum\limits_{i=0}^{\infty}p_{ij}\mu_i\left(f_j^{-1}\left(C\right)\right)\stackrel{(b)}{=}\sum\limits_{\xi_1=0}^{\infty}p_{\xi_1j}\cdot f_j\odot\mu_{\xi_1}\left(C\right),
\end{split}
\end{align}
where in the equality $(a)$ we used that $B_k=\emptyset$ if $k\neq j$ and $B_k=C$ if $k=j$ and in the equality $(b)$ we used that the operator $\odot$ is defined as $f_j\odot\mu_i\left(C\right)=\mu_i(f_j^{-1}\left(C\right))$ and we also changed the summation index $i$~to~$\xi_1$. So for $n=1$ we obtain (\ref{vzL4}) from Lemma \ref{L4}. 

Using the induction assumption we obtain that
\begin{align}
\label{hviezdicka}
\begin{split}
&\left(T^{n+1}\hat{\mu}\right)_j\left(C\right)=T^{n+1}\hat{\mu}(\hat{B})=T^{n}T\hat{\mu}(\hat{B})=\\
&=\sum\limits_{\xi_{n}=0}^{\infty}\ldots\sum\limits_{\xi_1=0}^{\infty}p_{\xi_{n}\xi_{n-1}}\cdot\ldots\cdot p_{\xi_2\xi_1}\cdot p_{\xi_1 j}\cdot\left(f_j\circ f_{\xi_1}\circ\ldots\circ f_{\xi_{n-1}}\right)\odot\left(T\hat{\mu}\right)_{\xi_n}\left(C\right).
\end{split}
\end{align}

The expression $\left(T\hat{\mu}\right)_{\xi_n}\left(C\right)$ can be written down in the same way as we did it in~(\ref{indukcia1}). It means that 
\begin{align}
\label{vztah mu}
\left(T\hat{\mu}\right)_{\xi_n}\left(C\right)=\sum\limits_{\xi_{n+1}=0}^{\infty}p_{\xi_{n+1}\xi_{n}}\cdot f_{\xi_n}\odot \mu_{\xi_{n+1}}\left(C\right).
\end{align}

Since $h_1\odot h_2\odot \mu=\left(h_1\circ h_2\right)\odot \mu$ for any measurable functions $h_1$, $h_2$ and measure $\mu$, we can put (\ref{vztah mu}) in  (\ref{hviezdicka}) to obtain
\footnotesize
\begin{align*}
&\left(T^{n+1}\hat{\mu}\right)_j\left(C\right)=\ldots=\\
&=\sum\limits_{\xi_{n+1}=0}^{\infty}\sum\limits_{\xi_{n}=0}^{\infty}\ldots\sum\limits_{\xi_1=0}^{\infty}p_{\xi_{n+1}\xi_{n}}p_{\xi_{n}\xi_{n-1}}\cdot\ldots\cdot p_{\xi_2\xi_1}\cdot p_{\xi_1 j}\cdot\left(f_j\circ f_{\xi_1}\circ\ldots\circ f_{\xi_{n-1}}\circ f_{\xi_{n}}\right)\odot \mu_{\xi_{n+1}}\left(C\right).
\end{align*}
\normalsize
Since there are a lot of zeros in the transition matrix $P$, there are only finitely many non-zero terms in the sums for fixed $n\in\mathbb{N}$ and fixed $j\in\mathbb{N}_0$. This enables the change of the summation order.
%Changing the summation order is a correct step in this case because there are only finitely many non-zero terms in the sums for fixed $n\in\mathbb{N}$ and fixed $j\in\mathbb{N}_0$, since there are a lot of zeros in our transition matrix $P$. More precisely, $p_{\xi_{n}\xi_{n-1}}\cdot\ldots\cdot p_{\xi_{2}\xi_{1}}\cdot p_{\xi_{1}j}$ is probability of the trajectory $\left(\xi_n,\xi_{n-1},\ldots,\xi_{1},j\right)$ and such trajectories which are admissible in our chain with the transition matrix $P$, are only finitely many. So this concludes the proof.
 More precisely, $p_{\xi_{n}\xi_{n-1}}\cdot\ldots\cdot p_{\xi_{2}\xi_{1}}\cdot p_{\xi_{1}j}$ is the probability of the trajectory $\left(\xi_n,\xi_{n-1},\ldots,\xi_{1},j\right)$ and the number of  trajectories of the length $n+1$ ending in the state $j$ which are admissible in the chain with the transition matrix $P$ is finite. This concludes the proof.
\end{proof}

\begin{proof}[Proof of Corollary \ref{C1}]\phantom{\qedhere}\hspace{-0.85cm}
From Lemma \ref{L4} we have
\begin{align*}
\left(T^n\hat{\mu}\right)_j\left(I\right)=\sum\limits_{\xi_{n}=0}^{\infty}\ldots\sum\limits_{\xi_{1}=0}^{\infty}p_{\xi_{n}\xi_{n-1}}\cdot\ldots\cdot p_{\xi_{2}\xi_{1}}\cdot p_{\xi_{1}j} \cdot \left(f_j\circ f_{\xi_{1}}\circ \ldots\circ f_{\xi_{n-1}}\right)\odot \mu_{\xi_n}\left(I\right).
\end{align*}
Moreover, $f_{i}^{-1}\left(I\right)=I$ for every $i\in\mathbb{N}_0$, so 
\begin{align*}
\left(f_j\circ f_{\xi_{1}}\circ \ldots\circ f_{\xi_{n-1}}\right)\odot \mu_{\xi_n}\left(I\right)=\mu_{\xi_n}\left(\left(f_j\circ f_{\xi_{1}}\circ \ldots\circ f_{\xi_{n-1}}\right)^{-1}\left(I\right)\right)=\mu_{\xi_n}\left(I\right).
\end{align*}
Therefore, we can write
\begin{align*}
\left(T^n\hat{\mu}\right)_j\left(I\right)=\sum\limits_{\xi_{n}=0}^{\infty}\ldots\sum\limits_{\xi_{1}=0}^{\infty}p_{\xi_{n}\xi_{n-1}}\cdot\ldots\cdot p_{\xi_{2}\xi_{1}}\cdot p_{\xi_{1}j} \cdot \mu_{\xi_n}\left(I\right)=\sum\limits_{i=0}^{\infty}\mu_i\left(I\right)\cdot p_{ij}^{(n)},
\end{align*}
where $p_{ij}^{(n)}$ denotes the $n\text{-step}$ transition probability from the state $i$ to the state $j$ and for simplicity, the index was changed from $\xi_{n}$ to $i$. 

It means that we want to prove that 
\begin{align}
\label{limit}
\lim\limits_{n\to\infty}\sum\limits_{j=0}^{\infty}\left|\sum\limits_{i=0}^{\infty}\mu_i\left(I\right)p_{ij}^{(n)}-m_j\right|=0,
\end{align}

The limit in (\ref{limit}) is definitely greater or equal to zero because each its term is nonnegative. Moreover $\mu_i\left(I\right)=\hat{\mu}\left(\left\{i\right\}\times I\right)$ and $\hat{\mu}$ is a probability measure, so $\sum\limits_{i=0}^{\infty}\mu_i\left(I\right)=~1$. If we use this fact we have
\begin{align*}
&\lim\limits_{n\to\infty}\sum\limits_{j=0}^{\infty}\left|\sum\limits_{i=0}^{\infty}\mu_i\left(I\right)p_{ij}^{(n)}-m_j\right|=\lim\limits_{n\to\infty}\sum\limits_{j=0}^{\infty}\left|\sum\limits_{i=0}^{\infty}\mu_i\left(I\right)p_{ij}^{(n)}-\sum\limits_{i=0}^{\infty}\mu_i\left(I\right)m_j\right|=\\
&=\lim\limits_{n\to\infty}\sum\limits_{j=0}^{\infty}\left|\sum\limits_{i=0}^{\infty}\mu_i\left(I\right)\left(p_{ij}^{(n)}-m_j\right)\right|\leq \lim\limits_{n\to\infty}\sum\limits_{j=0}^{\infty}\sum\limits_{i=0}^{\infty}\mu_i\left(I\right)\left|p_{ij}^{(n)}-m_j\right|\stackrel{(a)}{=}\\
&\stackrel{(a)}{=}\lim\limits_{n\to\infty}\sum\limits_{i=0}^{\infty}\sum\limits_{j=0}^{\infty}\mu_i\left(I\right)\left|p_{ij}^{(n)}-m_j\right|\stackrel{(b)}{=}\sum\limits_{i=0}^{\infty}\mu_i\left(I\right)\lim\limits_{n\to\infty}\sum\limits_{j=0}^{\infty}\left|p_{ij}^{(n)}-m_j\right|\stackrel{(c)}{=}0.
\end{align*}
Individual steps are more precisely justified in the following:
\begin{itemize}
	\item To change the order of the summations in the equality $(a)$, we have to show that 
	\begin{align*}
  \sum\limits_{j=0}^{\infty}\sum\limits_{i=0}^{\infty}\mu_i\left(I\right)\left|p_{ij}^{(n)}-m_j\right|<\infty \quad  \text{ or } \quad   \sum\limits_{i=0}^{\infty}\sum\limits_{j=0}^{\infty}\mu_i\left(I\right)\left|p_{ij}^{(n)}-m_j\right|<\infty.
	\end{align*}
We are going to show that $\sum\limits_{i=0}^{\infty}\sum\limits_{j=0}^{\infty}\mu_i\left(I\right)\left|p_{ij}^{(n)}-m_j\right|<\infty$, so
\begin{align*}
 &\sum\limits_{i=0}^{\infty}\sum\limits_{j=0}^{\infty}\mu_i\left(I\right)\left|p_{ij}^{(n)}-m_j\right|=\sum\limits_{i=0}^{\infty}\sum\limits_{j=0}^{\infty}\left|\mu_i\left(I\right)p_{ij}^{(n)}-\mu_i\left(I\right)m_j\right|\leq\\
&\leq\sum\limits_{i=0}^{\infty}\sum\limits_{j=0}^{\infty}\left|\mu_i\left(I\right)p_{ij}^{(n)}\right|+\sum\limits_{i=0}^{\infty}\sum\limits_{j=0}^{\infty}\left|\mu_i\left(I\right)m_j\right|=\sum\limits_{i=0}^{\infty}\mu_i\left(I\right)\sum\limits_{j=0}^{\infty}\left|p_{ij}^{(n)}\right|+1=\\
&=1+1=2<\infty.
\end{align*}
		\item To change the limit and the sum in the equality $(b)$, we have used Lebesgue's theorem. Now we check that its assumptions are satisfied. If we take the sequence
		\small
			\begin{align*}
 \left\{\sum\limits_{j=0}^{\infty}\mu_i\left(I\right)\left|p_{ij}^{(n)}-m_j\right|\right\}_{n=0}^{\infty},
			\end{align*}		
			\normalsize
		we have $\lim\limits_{n\to\infty}\sum\limits_{j=0}^{\infty}\mu_i\left(I\right)\left|p_{ij}^{(n)}-m_j\right|=0$ for every $i\in\N_0$ (cf. \cite{BM} Theorem 8.2 on page 172). We also have that 
	\begin{align*}		
		&\left|\sum\limits_{j=0}^{\infty}\mu_i\left(I\right)\left|p_{ij}^{(n)}-m_j\right|\right|=\sum\limits_{j=0}^{\infty}\mu_i\left(I\right)\left|p_{ij}^{(n)}-m_j\right|\leq\\
		&\leq\mu_i\left(I\right)\left(\sum\limits_{j=0}^{\infty}\left|p_{ij}^{(n)}\right|+\sum\limits_{j=0}^{\infty}\left|m_j\right|\right)=2\mu_i\left(I\right)
	\end{align*}
		for every $n\in\N$. Since  $\sum\limits_{i=0}^{\infty}2\mu_i\left(I\right)=2$ the assumptions of Lebesgue's theorem are satisfied.
		\item In the equality $(c)$ we used Theorem 8.2 from \cite{BM} on page~172. \qed
\end{itemize}
\end{proof}

%\begin{lem}(\cite{DiazMatias} Lemma 5.2.)
%\label{L52}
%For any sequence $\{\nu_n\}_{n=0}^\infty$ of probability measures on $\left(I,\mathcal{B}(I)\right)$, for any $\omega\in S$ and for any continuous function $g:I\to\R$
%\begin{align*}
%\lim_{n\to\infty} \int\limits_{I} g \dd  \left(f_{\omega_0}\circ\ldots\circ f_{\omega_n}\right)\odot \nu_n=g\circ \pi(\omega).
%\end{align*}
%\end{lem}

\begin{proof}[Proof of Lemma \ref{L52}]
It is sufficient to demonstrate that for any $\varepsilon>0$, there exists $n_0\in\mathbb{N}$ such that 
\begin{align*}
\left|h\circ \pi(\omega)-\int\limits_{I} h \dd\left(f_{\omega_0}\circ\ldots\circ f_{\omega_n}\right)\odot \nu_n\right|\le\varepsilon
\end{align*}
for every $n\ge n_0$. Let $\varepsilon>0$. Since the function $h$ is continuous, there exists $\delta>0$ such that if  $|y-\pi(\omega)|<\delta$,  then $|h(y)-h\circ\pi(\omega)|<\varepsilon$. Further $\pi(\omega)$ is defined as $\lim\limits_{n\to\infty} f_{\omega_0}\circ\ldots\circ f_{\omega_n}(x)$ for any $x$, since this limit does not depend on it.

Thus, for $\delta>0$, there exists some $n_0$, such that $\left|f_{\omega_0}\circ \ldots\circ f_{\omega_n}(x)-\pi(\omega)\right|<\delta$ for $n\ge n_0$ and for every $x\in I$. For $n\ge n_0$, we obtain that
\begin{equation}
\label{epsilon}
|h\circ \pi(\omega)-g\circ f_{\omega_0}\circ\ldots\circ f_{\omega_n}(x)|<\varepsilon
\end{equation} 
for every $x\in I$. Finally we have
\small
\begin{align*}
&\left|h\circ \pi(\omega)-\int\limits_{I} h \dd  \left(f_{\omega_0}\circ\ldots\circ f_{\omega_n}\right)\odot\nu_n\right|=
\left|\int\limits_{I} h\circ \pi(\omega)\dd \nu_n-\int\limits_{I} h \dd \left(f_{\omega_0}\circ\ldots\circ f_{\omega_n}\right)\odot\nu_n\right|=\\
&=\left|\int\limits_{I} h\circ \pi(\omega)\dd \nu_n-\int\limits_{I} h\circ f_{\omega_0}\circ\ldots\circ f_{\omega_n} \dd \nu_n\right|\le  \int\limits_{I} |h\circ \pi(\omega)-h\circ  f_{\omega_0}\circ\ldots\circ f_{\omega_n}|\dd \nu_n\stackrel{(a)}{\leq} \\
&\stackrel{(a)}{\leq} \int\limits_{I} \varepsilon \ \mathrm{d}\nu_n= \varepsilon,
\end{align*}
\normalsize
where in the inequality $(a)$ we used (\ref{epsilon}).
\end{proof}

\begin{lemma}(\cite{DiazMatias} Proposition 3.1)
\label{DMprop31} 
Consider two $\ppp\text{-admissible}$ cylinders $W\equiv[\xi_0,\ldots,\xi_{N-1}]$ and $[\eta_0,\ldots\eta_{N-1}]$, for which
\begin{itemize}
\item [(i)] $\xi_0=\eta_0$,
\item [(ii)]  $\xi_{N-1}=\eta_{N-1}$,
\item [(iii)]  $\ppp([\xi_0,\ldots,\xi_{N-1}])\le \ppp([\eta_0,\ldots,\eta_{N-1}])$,
\item [(iv)]  $f_{\xi_0}\circ\ldots\circ f_{\xi_{N-1}}(I)\cap f_{\eta_0}\circ\ldots\circ f_{\eta_{N-1}}(I)=\emptyset$.
\end{itemize}
Then for any $n\in\mathbb{N}$ and for any $x\in I$
\begin{align*}
\ppp(S_{nN}^x)\le\ppp(\Sigma_n^W).
\end{align*}
\end{lemma}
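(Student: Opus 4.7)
My plan is to work at the level of length-$nN$ cylinders, since both events depend only on the first $nN$ coordinates and can therefore be written as (countable) disjoint unions of cylinders $[c_0,\ldots,c_{nN-1}]$. Partition every such sequence $c$ into $n$ consecutive blocks $b_i(c)=(c_{iN},\ldots,c_{iN+N-1})$, $i=0,\ldots,n-1$, and write $g_W=f_{\xi_0}\circ\ldots\circ f_{\xi_{N-1}}$, $g_H=f_{\eta_0}\circ\ldots\circ f_{\eta_{N-1}}$, so $g_W(I)\cap g_H(I)=\emptyset$ by (iv). Let $\mathcal{C}^{nN}=\{c:x\in I_c^{nN}\}$, so $\ppp(S_{nN}^x)=\sum_{c\in\mathcal{C}^{nN}}\ppp([c])$.

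I introduce the \emph{replacement map} $T$ that sends a length-$nN$ sequence to the one obtained by replacing every block equal to $W$ by $H$. Then $T(c)$ has no block equal to $W$, so the cylinder $[T(c)]$ lies in $\Sigma_n^W$. The bound follows provided I can establish (A) $\ppp([c])\le\ppp([T(c)])$ for every admissible $c$, and (B) the restriction of $T$ to $\mathcal{C}^{nN}$ is injective, since then
\begin{equation*}
\ppp(S_{nN}^x)=\sum_{c\in\mathcal{C}^{nN}}\ppp([c])\le\sum_{c\in\mathcal{C}^{nN}}\ppp([T(c)])\le\ppp(\Sigma_n^W).
\end{equation*}

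Property (A) is a direct computation. Conditions (i) and (ii) guarantee that replacing a $W$-block by an $H$-block preserves the first and last states of that block, so the $q$-transitions entering and leaving the block are unchanged; the only modified factors are the $N-1$ internal transitions, and their ratio is exactly $\ppp(H)/\ppp(W)\ge 1$ by (iii). Applying this ratio once per replaced block yields (A).

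The main obstacle is (B). Suppose for contradiction that distinct $c,c'\in\mathcal{C}^{nN}$ satisfy $T(c)=T(c')$. They must then disagree in some block $b_{i^*}$ with (WLOG) $b_{i^*}(c)=W$ and $b_{i^*}(c')=H$; choose $i^*$ minimal, so that $c$ and $c'$ agree on all earlier blocks. Writing $\Psi_0=f_{c_0}\circ\ldots\circ f_{c_{i^*N-1}}$ for the common composition from the initial $i^*$ blocks and $\Psi_+^{(c)}, \Psi_+^{(c')}$ for the compositions from the remaining blocks, one gets $I_c^{nN}=\Psi_0(g_W(\Psi_+^{(c)}(I)))\subseteq\Psi_0(g_W(I))$ and analogously $I_{c'}^{nN}\subseteq\Psi_0(g_H(I))$. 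In the splitting-condition framework of Theorem~\ref{thm3} the $f_i$ are monotone injections, hence $\Psi_0$ is a monotone injection and in particular preserves disjointness. Since $g_W(I)\cap g_H(I)=\emptyset$, the two sets $\Psi_0(g_W(I))$ and $\Psi_0(g_H(I))$ are disjoint, so $I_c^{nN}\cap I_{c'}^{nN}=\emptyset$, contradicting $x\in I_c^{nN}\cap I_{c'}^{nN}$. This gives (B), and combined with (A) completes the plan; the delicate point is really the propagation of the one-block disjointness through the common prefix $\Psi_0$, which is what forces us to work with the earliest block of disagreement rather than with all blocks at once.
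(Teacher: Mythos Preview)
Your proposal is correct and follows essentially the same approach as the paper: the paper defines the identical replacement map (called $H_n$ there), proves your property~(A) as Claim~\ref{claim0} using conditions (i)--(iii) in the same way, proves injectivity (your~(B)) as Claim~\ref{claim2} by the same minimal-block-of-disagreement argument exploiting (iv) and injectivity of the $f_i$, and concludes via the same chain of inequalities. The only cosmetic difference is that the paper packages the cylinder classes as $\Sigma_x^n$ and $E^n$ rather than $\mathcal{C}^{nN}$; your choice of symbol $T$ also clashes with the Markov operator used elsewhere in the paper, so you may want to rename it.
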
 
The sets $S_{nN}^x$ and $\Sigma_n^W$ contain sequences that form unions of the following classes of cylinders, which will be defined for a fixed $x\in I$ and fixed $n\in\mathbb{N}$ as
\begin{align*}
\Sigma_x^n&\equiv \{[a_0,\ldots,a_{nN-1}]\subseteq \Sigma: x\in f_{a_0}\circ\ldots\circ f_{a_{nN-1}}(I)\},\\
E^n&\equiv\{[a_0,\ldots,a_{nN-1}]\subseteq \Sigma: \sigma^{iN}([a_0,\ldots,a_{nN-1}])\cap W=\emptyset, i=0,1,\ldots,n-1\}.
\end{align*}
More precisely, we obtain
\begin{align}
S_{nN}^x&=\bigcup_{C\in \Sigma_x^n} C, \label{SnNxDecomp1}\\
\Sigma_n^W&=\bigcup_{C\in E^n} C,  \label{SigmanWDecomp1}
\end{align}
which implies 
\begin{align}
\ppp(S_{nN}^x)&=\sum_{C\in \Sigma_x^n} \ppp(C), \label{SnNxDecomp}\\
\ppp(\Sigma_n^W)&= \sum_{C\in E^n} \ppp(C), \label{SigmanWDecomp}
\end{align}
because $\Sigma_x^n$ and $E^n$ are countable sets. 

Next, for every $n\in\mathbb{N}$, we define the function $H_n:\Sigma_x^n\to E^n$ such that for $C=C_0\star\ldots\star C_{n-1}$
\begin{align*}
H_n(C)=H_n(C_0\star\ldots\star C_{n-1})=C_0^\prime\star\ldots\star C_{n-1}^\prime, 
\end{align*}
where $C_i^\prime=C_i$ if $C_i\neq[\xi_0,\ldots,\xi_{N-1}]$ and $C_i^\prime=[\eta_0,\ldots,\eta_{N-1}]$ if $C_i=[\xi_0,\ldots,\xi_{N-1}]$.

\begin{claim}(\cite{DiazMatias} Claim 3.2.)
\label{claim0}
For every $C\in \Sigma_x^n$ it holds $\ppp(C)\le \ppp(H_n(C))$ .
\end{claim}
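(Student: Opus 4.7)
The plan is to compute $\ppp(C)$ explicitly from the formula
$$\ppp([a_0,\ldots,a_{nN-1}])=m_{a_0}\prod_{k=0}^{nN-2}q_{a_k a_{k+1}}$$
and observe that, when $H_n$ replaces a block $C_i=W=[\xi_0,\ldots,\xi_{N-1}]$ by $[\eta_0,\ldots,\eta_{N-1}]$, only the intra-block transition terms change, while the inter-block (linking) terms and the initial factor $m_{a_0}$ are preserved. The inequality then reduces to the already-available condition (iii).

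First, I would split the product $\prod_{k=0}^{nN-2}q_{a_k a_{k+1}}$ according to block boundaries: the interior transitions correspond to indices $k\in\{iN,iN+1,\ldots,iN+N-2\}$ for $i=0,\ldots,n-1$, while the linking transitions correspond to $k=iN+N-1$ for $i=0,\ldots,n-2$. Writing $C=C_0\star\ldots\star C_{n-1}$ with $C_i=[a_0^{(i)},\ldots,a_{N-1}^{(i)}]$, this gives
$$\ppp(C)=m_{a_0^{(0)}}\prod_{i=0}^{n-1}\Bigl(\prod_{j=0}^{N-2}q_{a_j^{(i)}a_{j+1}^{(i)}}\Bigr)\prod_{i=0}^{n-2}q_{a_{N-1}^{(i)}a_0^{(i+1)}}.$$

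Next, suppose $C_i=W$ is replaced by $C_i'=[\eta_0,\ldots,\eta_{N-1}]$ in $H_n(C)$. The interior product of the $i$-th block changes from $\prod_{j=0}^{N-2}q_{\xi_j\xi_{j+1}}$ to $\prod_{j=0}^{N-2}q_{\eta_j\eta_{j+1}}$. The preceding linking term $q_{a_{N-1}^{(i-1)}a_0^{(i)}}=q_{a_{N-1}^{(i-1)},\xi_0}$ (when $i>0$) is unchanged because $\xi_0=\eta_0$ by (i), and the following linking term $q_{a_{N-1}^{(i)},a_0^{(i+1)}}=q_{\xi_{N-1},a_0^{(i+1)}}$ (when $i<n-1$) is unchanged because $\xi_{N-1}=\eta_{N-1}$ by (ii). Finally, $m_{a_0^{(0)}}$ is unaffected since even if $C_0=W$ then $a_0^{(0)}=\xi_0=\eta_0$, so $m_{\xi_0}=m_{\eta_0}$.

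Letting $k$ denote the number of indices $i$ with $C_i=W$, the above accounting yields
$$\frac{\ppp(H_n(C))}{\ppp(C)}=\left(\frac{\prod_{j=0}^{N-2}q_{\eta_j\eta_{j+1}}}{\prod_{j=0}^{N-2}q_{\xi_j\xi_{j+1}}}\right)^{\!k}=\left(\frac{\ppp([\eta_0,\ldots,\eta_{N-1}])}{\ppp(W)}\right)^{\!k}\ge 1,$$
where in the middle equality $m_{\xi_0}=m_{\eta_0}$ cancels, and the last inequality is precisely condition (iii). If $k=0$ then $H_n(C)=C$ and the claim is trivial; if $\ppp(C)=0$ the inequality is trivial as well. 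This proves $\ppp(C)\le\ppp(H_n(C))$.

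The work here is essentially bookkeeping rather than deep mathematics; the only real point to be careful about is the separation of the product into interior and linking factors and the verification that the linking factors and the $m$-factor are genuinely invariant under the replacement, which is exactly where conditions (i) and (ii) are used, while (iii) provides the required inequality on the interior factors.
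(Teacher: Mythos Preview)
Your proof is correct and follows essentially the same approach as the paper's: both arguments factor $\ppp(C)$ via the product formula $m_{a_0}\prod q_{a_ka_{k+1}}$, use (i) and (ii) to see that replacing a block $W$ by $[\eta_0,\ldots,\eta_{N-1}]$ leaves the initial $m$-factor and the linking transitions unchanged, and then invoke (iii) (together with $m_{\xi_0}=m_{\eta_0}$) to conclude that the interior products can only increase. Your version is simply a more explicit and carefully organized rendering of the same bookkeeping, including the harmless edge cases $k=0$ and $\ppp(C)=0$.
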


\begin{proof}
We know that $\ppp([\xi_0,\ldots,\xi_{N-1}])=m_{\xi_0}\cdot q_{\xi_0\xi_1}\cdot\ldots\cdot q_{\xi_{N-2}{\xi_{N-1}}}$ and $\ppp([\eta_0,\ldots,\eta_{N-1}])=m_{\eta_0}\cdot q_{\eta_0\eta_1}\cdot\ldots\cdot q_{\eta_{N-2}{\eta_{N-1}}}$. According to the assumptions of Lemma~\ref{DMprop31}, $\ppp([\xi_0,\ldots,\xi_{N-1}])\le \ppp([\eta_0,\ldots,\eta_{N-1}])$ and $\xi_0=\eta_0$, thus also
\begin{align*}
q_{\xi_0\xi_1}\cdot\ldots\cdot q_{\xi_{N-2}\xi_{N-1}}\le q_{\eta_0\eta_1}\cdot\ldots\cdot q_{\eta_{N-2}\eta_{N-1}}.
\end{align*}
We can notice that if we apply the function $H_n$ to some cylinder $C=[a_0,\ldots,a_{nN-1}]$ (which has a probability of $m_{a_0}\cdot q_{a_0a_1}\cdot\ldots\cdot q_{a_{nN-2}a_{nN-1}}$), some sequence $\xi_0,\ldots,\xi_{N-1}$ may change to the sequence $\eta_0,\ldots,\eta_{N-1}$, otherwise, everything remains the same. The difference in the calculation of the probabilities of $H_n(C)$ and $C$ is that instead of certain products of the numbers $q_{\xi_0\xi_1}\cdot\ldots\cdot q_{\xi_{N-2}\xi_{N-1}}$, we will have the product of the numbers $q_{\eta_0\eta_1}\cdot\ldots\cdot q_{\eta_{N-2}\eta_{N-1}}$, which will not be less. Since we assume that  $\xi_0=\eta_0$ a $\xi_{N-1}=\eta_{N-1}$, it would not happen that $C$ is $\ppp\text{-admissible}$ while $H_n(C)$ is not.
\end{proof}

\begin{claim}(\cite{DiazMatias} Lemma 3.3.)
\label{claim2}
The function $H_n$ is injective for every $n\in\mathbb{N}$. 
\end{claim}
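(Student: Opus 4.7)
The plan is a proof by contradiction. Suppose $C,C'\in\Sigma_x^n$ satisfy $H_n(C)=H_n(C')$ but $C\neq C'$, and write $C=C_0\star\ldots\star C_{n-1}$, $C'=C_0'\star\ldots\star C_{n-1}'$, with each $C_j,C_j'$ a cylinder of length $N$. The definition of $H_n$ forces $C_j=C_j'$ at every position $j$ where $H_n(C)_j\neq[\eta_0,\ldots,\eta_{N-1}]$, so the only positions at which the two cylinders can disagree are those where both $C_j$ and $C_j'$ lie in the two-element set $\{[\xi_0,\ldots,\xi_{N-1}],[\eta_0,\ldots,\eta_{N-1}]\}$.

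Let $i$ be the smallest index at which $C_i\neq C_i'$; without loss of generality $C_i=[\xi_0,\ldots,\xi_{N-1}]$ and $C_i'=[\eta_0,\ldots,\eta_{N-1}]$. By minimality of $i$, the coordinate sequences of $C$ and $C'$ coincide up to position $iN-1$, so the prefix composition $B\equiv f_{c_0}\circ\ldots\circ f_{c_{iN-1}}$ is the same whether read off from $C$ or from $C'$. Since under the hypotheses of Theorem~\ref{thm3} each $f_j$ is a monotone injection, $B$ is injective. Let $A$ and $A'$ denote the images of $I$ under the tails of $C$ and $C'$ past the $i$-th block (both subsets of $I$). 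The assumption $C,C'\in\Sigma_x^n$ then gives that $x$ lies in the intersection $B\bigl(f_{\xi_0}\circ\ldots\circ f_{\xi_{N-1}}(A)\bigr)\cap B\bigl(f_{\eta_0}\circ\ldots\circ f_{\eta_{N-1}}(A')\bigr)$.

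The splitting condition~(iv) states that $f_{\xi_0}\circ\ldots\circ f_{\xi_{N-1}}(I)\cap f_{\eta_0}\circ\ldots\circ f_{\eta_{N-1}}(I)=\emptyset$. Restricting to the subsets $A,A'\subseteq I$ preserves this disjointness, and applying the injective prefix map $B$ preserves it once more. Thus the intersection in which $x$ was shown to lie must in fact be empty, a contradiction. Therefore no such differing pair $C,C'$ exists, which is exactly the injectivity of $H_n$.

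The main point that needs care is the propagation of disjointness through the common prefix $B$: a general continuous map can glue disjoint sets together, so this step relies essentially on the fact that each $f_j$, and hence every finite composition of them, is injective—a feature handed to us by the monotone-injection hypothesis of Theorem~\ref{thm3}. Once that observation is in place, the remainder is a clean reduction to a single application of the splitting condition~(iv) at a well-chosen block position, and there are no further technicalities.
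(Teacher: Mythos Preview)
Your proof is correct and follows essentially the same route as the paper's: argue by contradiction, pick the smallest block index $i$ at which $C$ and $C'$ differ, observe that the two differing blocks must be $[\xi_0,\ldots,\xi_{N-1}]$ and $[\eta_0,\ldots,\eta_{N-1}]$, invoke condition~(iv) to get disjointness of the images past the common prefix, and then use injectivity of the $f_j$ (from the monotone-injection hypothesis of Theorem~\ref{thm3}) to push that disjointness through the common prefix composition, contradicting $x\in I_C^{nN}\cap I_{C'}^{nN}$. Your write-up is in fact slightly more explicit than the paper's in justifying why the differing blocks must lie in $\{[\xi_0,\ldots,\xi_{N-1}],[\eta_0,\ldots,\eta_{N-1}]\}$.
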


\begin{proof}
Assume the contrary - let there exist cylinders $C=C_0\star \ldots\star C_{n-1}$ and $\tilde C= \tilde{C}_0\star\ldots\star\tilde{C}_{n-1}$ such that  $C,\tilde C\in\Sigma_x^n$, $C\neq \tilde C$ and $H_n(C)=H_n(\tilde{C})$. Since $C\neq\tilde C$, there must exist some $i=0,1,\ldots,n-1$, such that $C_i\neq \tilde C_i$. Take the smallest $i$ with this property. We can assume that $C_i=[\xi_0,\ldots,\xi_{N-1}]$ and $\tilde C_i=[\eta_0,\ldots,\eta_{N-1}]$. Since we have taken the smallest possible $i$, the cylinders $C$ and $\tilde C$ must have the form $C=[a_0,\ldots,a_l,\xi_0,\ldots,\xi_{N-1},b_0,\ldots,b_r]$ and $\tilde C=[a_0,\ldots,a_l,\eta_0,\ldots,\eta_{N-1},b_0^\prime,\ldots,b_r^\prime]$. 

Let's denote $W^-\equiv[\xi_{N-1},\ldots,\xi_0]$ and $V^-\equiv[\eta_{N-1},\ldots,\eta_0]$. From the assumption of the lemma, we have
\begin{align*}
f_{\xi_0}\circ\ldots\circ f_{\xi_{N-1}}(I)\cap f_{\eta_0}\circ\ldots\circ f_{\eta_{N-1}}(I)&=\emptyset,\\
f_{\circ W^-}(I)\cap f_{\circ V^-}(I)&=\emptyset.
\end{align*}
Thus also
\begin{align*}
f_{\circ W^-}\circ f_{b_0}\circ \ldots\circ f_{b_r}(I)\cap f_{\circ V^-}\circ f_{b_0^\prime}\circ\ldots\circ f_{b_r^\prime}(I)=\emptyset.
\end{align*}
Since $f_i$ for $i\in\mathbb{N}_0$ are injective
\begin{align*}
f_{a_0}\circ\ldots\circ f_{a_l}\circ f_{\circ W^-}\circ f_{b_0}\circ \ldots\circ f_{b_r}(I)\cap f_{a_0}\circ\ldots\circ f_{a_l}\circ f_{\circ V^-}\circ f_{b_0^\prime}\circ\ldots\circ f_{b_r^\prime}(I)=\emptyset.
\end{align*}
%Thus, it does not happen that the point $x$ belongs to both sets, 
Hence the point $x$ does not belong to both sets, which is a contradiction with the fact that $C$ and $\tilde C$ are from the set $\Sigma_x^n$. 
\end{proof}
\begin{proof}[Proof of Lemma \ref{DMprop31}]
It is sufficient to combine the above statements:
\begin{align*}
&\ppp(S_{nN}^x)\stackrel{(a)}{=}\sum_{C\in \Sigma_{x}^n}\ppp(C)\stackrel{(b)}{\leq}\sum_{C\in \Sigma_{x}^n}\ppp(H_n(C))\stackrel{(c)}{=}\\
&\stackrel{(c)}{=}\ppp\left(\bigcup_{C\in\Sigma_x^n}H_n(C)\right)\stackrel{(d)}{\leq}\ppp\left( \bigcup_{C\in E^n} C\right)\stackrel{(e)}{=}
\ppp(\Sigma_n^W). 
\end{align*}
The equality $(a)$ follows from~(\ref{SnNxDecomp}), the inequality $(b)$ follows from Claim \ref{claim0}, the equality $(c)$ from injectivity, i.e. from Claim \ref{claim2}, the inequality~$(d)$ from the fact that $H_n$ maps the elements from $\Sigma_x^n$ to the set $E^n$ and the equality~$(e)$ from~(\ref{SigmanWDecomp1}).
\end{proof}

\end{document}